\pgfplotsset{compat=1.15}
\definecolor{qqqqff}{rgb}{0,0,1}
\definecolor{ttqqqq}{rgb}{0.2,0,0}
\definecolor{ffqqqq}{rgb}{1,0,0}
\newcommand{\bb}[1]{\mathbb{#1}}
\newcommand{\cc}[1]{\mathcal{#1}}
\theoremstyle{definition}
\newtheorem{theorem}{Theorem}[section]
\newtheorem{question}[theorem]{Question}
\newtheorem{corollary}[theorem]{Corollary}
\newtheorem{lemma}[theorem]{Lemma}
\newtheorem{claim}[theorem]{Claim}
\title{Expansion, long cycles, and complete minors in supercritical random
subgraphs of the hypercube}
	\date{}
	\author{Joshua Erde$^{*}$, Mihyun Kang$^{*}$, and Michael Krivelevich$^{\ddagger}$ \\ \\
		\today}
	\thanks{$^{*}$ 
		Institute of Discrete Mathematics, 
		Graz University of Technology, 
		Steyrergasse 30,
		8010 Graz,
		Austria,  
		{\tt \{erde,kang\}@math.tugraz.at}.
		Supported by Austrian Science Fund (FWF): I3747\phantom{}}
	\thanks{$^{\ddagger}$ 
		School of Mathematical Sciences, 
		Sackler Faculty of Exact Sciences, 
		Tel Aviv University,  
		Tel Aviv 6997801,
		Israel, 
		{\tt  krivelev@tauex.tau.ac.il}.
		Supported in part by USA-Israel BSF grant 2018267, and by ISF grant 1261/17.}
\begin{document}
	
	\begin{abstract}
Analogous to the case of the binomial random graph $G(d+1,p)$, it is known that the behaviour of a random subgraph of a $d$-dimensional hypercube, where we include each edge independently with probability $p$, which we denote by $Q^d_p$, undergoes a phase transition when $p$ is around $\frac{1}{d}$. More precisely, standard arguments show that significantly below this value of $p$, with probability tending to one as $d \to \infty$ (whp for short) all components of this graph have order $O(d)$, whereas Ajtai, Koml\'{o}s and Szemer\'{e}di \cite{AKS81} showed that significantly above this value, in the \emph{supercritical regime}, whp there is a unique `giant' component of order $\Theta\left(2^d\right)$. 
In $G(d+1,p)$ much more is known about the complex structure of the random graph which emerges in this supercritical regime. For example, it is known that in this regime whp $G(d+1,p)$ contains paths and cycles of length $\Omega(d)$, as well as complete minors of order $\Omega\left(\sqrt{d}\right)$. In this paper we obtain analogous results in $Q^d_p$. In particular, we show that for supercritical $p$, i.e., when $p=\frac{1+\epsilon}{d}$ for a positive constant $\epsilon$, whp $Q^d_p$ contains a cycle of length $\Omega\left(\frac{2^d}{d^3(\log d)^3} \right)$ and a complete minor of order $\Omega\left(\frac{2^{\frac{d}{2}}}{d^3(\log d)^3 }\right)$. In order to prove these results, we show that whp the largest component of $Q^d_p$ has good edge-expansion properties, a result of independent interest. We also consider the genus of $Q^d_p$ and show that, in this regime of $p$, whp the genus is $\Omega\left(2^d\right)$.
	\end{abstract}

	\maketitle

	\section{Introduction}
The binomial random graph model $G(n,p)$, introduced by Gilbert \cite{G59}, is a random variable on the subgraphs of the complete graph $K_n$ whose distribution is given by including each edge in the subgraph independently with probability $p$. Since its introduction, this model has been extensively studied. A particularly striking feature of this model is the `phase transition' that it undergoes at $p = \frac{1}{n}$, exhibiting vastly different behaviour when $p = \frac{1 - \varepsilon}{n}$ to when $p = \frac{1+\varepsilon}{n}$ (where $\varepsilon$ is a positive constant). For more background on the theory of random graphs, see \cite{B01,JLR00,FK16}.
	
More recently, the following generalisation of this model has been the object of study: Suppose $G$ is an arbitrary graph with minimum degree $\delta(G)$ at least $d$ and let $G_p$ denote the random subgraph of $G$ obtained by retaining each edge of $G$ independently with probability $p$. When $G= K_{d+1}$, the complete graph on $d+1$ vertices, we recover the model $G(d+1,p)$. 
	
If we take the probability $p$ to be a function of (the lower bound $d$ for) the minimum degree of $G$, rather than the order (i.e., number of vertices) of $G$, it has been shown that the model $G_p$ shares many properties with $G(d+1,p)$. In particular, some of the complex behaviour which occurs whp\footnote{Throughout the paper, all asymptotics will be considered as $d \to \infty$ and so, in particular, whp (with high probability) means with probability tending to one as $d \to \infty$} in $G(d+1,p)$ once we pass the critical point of $p=\frac{1}{d}$ also occurs whp in $G_p$ in the same regime of $p$.
For example, when $p= \frac{1+\varepsilon}{d}$ for $\epsilon > 0$, it has been shown that whp $G_p$ contains a path or cycle of length linear in $d$, see \cite{KS13,KS14,EJ18}. Furthermore, for this range of $p$ it has been shown by Frieze and Krivelevich \cite{FK13} that whp $G_p$ is non-planar and by Erde, Kang and Krivelevich \cite{EKK20} that in fact, whp $G_p$ contains a complete minor of order\footnote{The notation $\tilde{\Omega}\left(\cdot\right)$ here is hiding a polylogarithmic factor in $d$.} $\tilde{\Omega}\left(\sqrt{d}\right)$ and hence has genus $\tilde{\Omega}(d)$. Similarly, when $p = \omega\left(\frac{1}{d}\right)$, whp $G_p$ contains a path or cycle of length $(1-o(1))d$, as shown in \cite{R14,KLS15}. In addition, when $p = (1+\varepsilon)\frac{\log d}{d}$ for $\epsilon >0$, whp $G_p$ contains a path of length $d$ and in fact even a cycle of length $d+1$, see \cite{KLS15,GNS17} respectively. All of these results generalise known results about the binomial model $G(d+1,p)$.

The model $G(d+1,p)$ shows that these results are optimal when $G$ can be an \emph{arbitrary} graph, but for \emph{specific} graphs $G$, they may be far from the truth. One particular graph for which the model $G_p$ has been studied extensively is the \emph{hypercube} $Q^d$, the graph with vertex set $V\left(Q^d\right) = \{0,1\}^d$ and in which two vertices are adjacent if they differ in exactly one coordinate. Throughout this paper we will write $n := 2^d$ for the order of the hypercube and we note that $|E\left(Q^d\right)| = \frac{nd}{2}$.

Random subgraphs of the hypercube were first studied by Saposhenko \cite{S67} and Burtin \cite{B77}, who showed that $Q^d_p$ has a threshold for connectivity at $p=\frac{1}{2}$; for a fixed constant $p < \frac{1}{2}$ whp $Q^d_p$ is disconnected and for a fixed constant $p > \frac{1}{2}$ whp $Q^d_p$ is connected, and this result was strengthened by Erd\H{o}s and Spencer \cite{ES79} and by Bollob\'{a}s \cite{B83}. Bollob\'{a}s \cite{B90} also showed that $p=\frac{1}{2}$ is the threshold for the existence of a perfect matching in $Q^d_p$. Very recently, answering a longstanding open problem, Condon, Espuny D{\'\i}az, Girao, K{\"u}hn and Osthus \cite{CDGKO21} showed that $p=\frac{1}{2}$ is also the threshold for the existence of a Hamilton cycle in $Q^d_p$.

It was conjectured by Erd\H{o}s and Spencer \cite{ES79} that $Q^d_p$ should undergo a similar phase transition at $p=\frac{1}{d}$ as $G(d+1,p)$ does: It is relatively easy to see, by a coupling with a branching process, that when $p = \frac{1-\epsilon}{d}$ for $\epsilon>0$, whp all components of $Q^d_p$ have order $O(d)$, but they conjectured that when $p= \frac{1+\epsilon}{d}$, whp $Q^d_p$ contains a unique `giant' component whose order is linear in $n=2^d$. This conjecture was confirmed by Ajtai, Koml\'{o}s and Szemer\'{e}di \cite{AKS81}.

\begin{theorem}[\cite{AKS81}]\label{t:AKS}
Let $\epsilon > 0$ be a constant and let $p= \frac{1+\epsilon}{d}$. Then there exists a $\gamma >0$ such that whp the largest component of $Q^d_p$ has order at least $\gamma n$.
\end{theorem}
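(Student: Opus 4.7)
The plan is to use the two-round sprinkling argument of Ajtai, Komlós, and Szemerédi. Write $p = 1 - (1-p_1)(1-p_2)$ with $p_1 = \frac{1+\epsilon/2}{d}$ and $p_2 = \Theta(1/d)$, so that $Q^d_p$ is distributed as the union of two independent random subgraphs $Q^d_{p_1} \cup Q^d_{p_2}$; the first round produces many vertices in moderately large components, and the second (sprinkling) round merges them into one giant component.

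For the first round, I would show that whp a positive proportion of vertices lie in $Q^d_{p_1}$-components of size at least some threshold $k = k(d)$. Fix a vertex $v$ and explore its $Q^d_{p_1}$-component by BFS. As long as the explored set has size at most $k := cd$ for a sufficiently small constant $c = c(\epsilon) > 0$, each newly processed vertex has at least $(1-c)d$ still-unvisited neighbours in $Q^d$, each present in $Q^d_{p_1}$ independently with probability $p_1$; the number of children therefore stochastically dominates $\mathrm{Bin}((1-c)d, p_1)$, whose mean is at least $(1-c)(1+\epsilon/2)$, strictly above $1$ for small $c$. Hence the BFS dominates from below a supercritical Galton--Watson branching process with survival probability $\rho = \rho(\epsilon) > 0$, so $\Pr[|C(v)| \geq k] \geq \rho/2$ for $d$ large, giving $\mathbb{E}|L| \geq \rho n/2$, where $L$ denotes the set of vertices in $Q^d_{p_1}$-components of size $\geq k$. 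Concentration of $|L|$ around its expectation follows from an edge-exposure martingale (altering one edge changes $|L|$ by at most $2k$, since this can merge or split at most two components straddling the threshold $k$) together with Azuma's inequality; the resulting deviation $O(k\sqrt{nd\log n}) = o(n)$ is negligible since $k = \Theta(d)$ and $n = 2^d$.

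For the sprinkling round, I would invoke Harper's edge-isoperimetric inequality for $Q^d$ --- every $S \subseteq V(Q^d)$ with $|S| \leq n/2$ satisfies $|\partial_{Q^d} S| \geq |S|\log_2(n/|S|) \geq |S|$ --- together with a union bound. Condition on a typical realisation of $Q^d_{p_1}$ with $|L| \geq \rho n/4$. If no $Q^d_p$-component of linear size exists, then one can extract a set $S$ that is a union of round-one large components with $|S| \in [\rho n / 16, n/2]$ whose $Q^d$-boundary is crossed by no $Q^d_{p_2}$-edge. The probability of this for a fixed $S$ is at most $(1-p_2)^{|\partial_{Q^d} S|} \leq \exp(-\Omega(n/d))$, and the number of such $S$ is at most $2^{|L|/k} \leq 2^{n/k}$. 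The union bound yields total failure probability at most $\exp(n\log 2/k - \Omega(n/d))$, which is $o(1)$ once $k$ is a sufficiently large multiple of $d$.

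The principal technical obstacle is calibrating $k$: for the BFS to succeed one needs $k \leq \Theta(\epsilon d)$ (beyond which the effective offspring mean $(d-k)p_1$ drops below $1$), whereas the sprinkling union bound requires $k$ to exceed a multiple of $d$ that grows like $1/\epsilon$. For small $\epsilon$ these windows are in tension and must be reconciled using the full strength of Harper's inequality --- exploiting the additional factor $\log_2(n/|S|)$ in the edge boundary when $|S|$ is substantially smaller than $n$ --- together with a separate absorption step handling small round-one components, which could otherwise multiply the number of relevant cuts appearing in the union bound. Making this balance work uniformly in $\epsilon$ is precisely the ingenious core of the original Ajtai--Komlós--Szemerédi proof.
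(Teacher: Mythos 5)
Your high-level plan (two-round exposure, branching-process comparison for the first round, Harper plus a union bound for the sprinkling round) is the right family of ideas, and you correctly identify that the parameter $k$ is squeezed from both sides. But the tension you flag is not just a calculational annoyance to be resolved by ``using the full strength of Harper'' --- it is a genuine obstruction to the two-round scheme when $\epsilon$ is small, and the way the paper (following AKS) escapes it is structurally different: it uses \emph{three} exposure rounds. The first round produces, around almost every vertex, $\Omega(d)$ disjoint connected pieces of size $\Omega(d)$ (via a subcube decomposition, Claims~\ref{c:claim1}--\ref{c:claim3}); the second round \emph{merges} these into components of order $\Omega(d^2)$ (Claim~\ref{c:claim4}); only then does the final sprinkling round do the global merge. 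With pieces of size $\Theta(d^2)$ the number of candidate cuts is $2^{O(n/d^2)}$, which comfortably loses to a sprinkling failure probability of $\exp(-\Omega(n/d^{3/2}))$ (and even $\exp(-\Omega(n/d^2))$), so the $\epsilon$-dependence of the constants no longer matters. Your proposal as written has $2^{\Theta(n/(\epsilon d))}$ cuts versus $\exp(-\Theta(\epsilon^2 n/d))$ per cut, which fails once $\epsilon^3$ is below a fixed constant; no ``absorption step'' on the round-one data fixes this without the intermediate boosting idea.

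There is also a second gap that you do not flag. You claim that if no linear $Q^d_p$-component exists then one can extract a union $S$ of round-one large components with $|S|\in[\rho n/16,n/2]$ such that \emph{no} $Q^d_{p_2}$-edge crosses $\partial_{Q^d} S$. That implication is false: $S$ is a union of round-one components, not of $Q^d_p$-components, so a round-two edge can perfectly well leave $S$ into a vertex outside $L$ (say an isolated vertex) without merging $S$ with any other large component. What the absence of a giant actually gives you is that there is \emph{no $Q^d_p$-path} between the two sides $A$ and $B$, and bounding the probability of this requires counting disjoint potential short paths between $A$ and $B$, not edges in $\partial_{Q^d} S$. This is exactly what the paper's Claim~\ref{c:partition} does: it splits on whether the neighbourhood overlap $D=N(A)\cap N(B)$ is large (many $2$-paths through $D\setminus X$) or small (a matching from $N(A)$ to $N(A)^c$ via Theorem~\ref{t:vtxiso}, giving many $3$-paths through good vertices), with property~$(d)$ of Claim~\ref{c:claim4} guaranteeing the intermediate vertices have $\Omega(d)$ neighbours in $Y$. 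Incidentally, the paper uses the Bollob\'as--Leader vertex-isoperimetric matching rather than the edge version of Harper, because it is the matching structure (disjoint edges landing on good vertices) that feeds into the path-counting; this is also where the paper notes and repairs a factor-$d$ slip in the original AKS write-up.
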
 

Ajtai, Koml\'{o}s and Szemer\'{e}di also indicated that it is possible to show that there is a unique `giant' component of linear order in $n$, whose order is $\left(\gamma(\epsilon) +o(1)\right) n$ where $\gamma(\epsilon)$ is the survival probability of the Po$(1+\epsilon)$ branching process. These results were later extended to a wider range of $p$, describing more precisely the component structure of $Q^d_p$ when $p = \frac{1+\epsilon}{d}$ with $\epsilon=o(1)$ by Bollob\'{a}s, Kohayakawa and {\L}uczak \cite{BKL92}, by Borgs, Chayes, van der Hofstad, Slade and Spencer \cite{BCVSS06}, and by Hulshof and Nachmias \cite{HN20}, with the correct width of the critical window in this model being only recently identified by van der Hofstad and Nachmias \cite{HN17}. For a more detailed background on the phase transition in this model, see the survey of van der Hofstad and Nachmias \cite{HN14}.

If we consider the structure of $Q^d_p$ above this critical threshold of $p=\frac{1}{d}$, then we can conclude from the results in the $G_p$ model mentioned earlier that whp $Q^d_p$ contains a path and cycle of length $\Omega(d)$, has genus $\tilde{\Omega}(d)$ and contains a complete minor of order $\tilde{\Omega}(\sqrt{d})$. However, these results seem far from optimal. Indeed, already above this threshold whp $Q^d_p$ contains a component whose order is linear in $n=2^d$, and so exponentially large in $d$. Hence, it is possible that $Q^d_p$ could contain a path and cycle of length linear in $n$. Similarly, it is known that the genus of $Q^d$ is exponentially large in $d$. In fact, the genus of $Q^d$ has been shown to be precisely $(d-4)2^{d-3}+1$ for $d \geq 2$, see \cite{R55,BH65}, and the size of the largest complete minor in $Q^d$ was shown to be between $2^{\lfloor\frac{d-1}{2}\rfloor}$ and $\sqrt{d}2^{\frac{d}{2}} + 1$ in \cite{CS07}. Hence, it is possible that whp the genus of $Q^d_p$ in the supercritical regime is linear in $n$ and whp the size of the largest complete minor in $Q^d_p$ is of order $\Omega\left(\sqrt{n}\right)$.

The main aim of this paper is to show that both of these statements are true, at least up to a polylogarithmic factor in $n$. More precisely, we prove the following theorems concerning the length of the longest cycle and the size of the largest complete minor in $Q^d_p$.

	\begin{theorem}\label{t:longcycle}
	Let $\epsilon > 0$ be a constant and let $p = \frac{1+\epsilon}{d}$. Then whp $Q^d_p$ contains a cycle of length $\Omega \left( \frac{2^d}{d^3(\log d)^3 } \right)$.
	\end{theorem}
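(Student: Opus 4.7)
\textbf{Proof plan for Theorem~\ref{t:longcycle}.}
My approach would follow the ``expansion + sprinkling'' strategy: first, extract a giant subgraph with good edge-expansion; second, use the expansion to build a long path; and third, close the path into a cycle with a few extra random edges.

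\textbf{Step 1 (Sprinkling).} I would write $p = p_1 + p_2 - p_1 p_2$ with $p_1 = \frac{1+\epsilon/2}{d}$ still supercritical and $p_2 = \Theta(1/d)$ a small additional density, so that $Q^d_p$ is distributed as $Q^d_{p_1} \cup Q^d_{p_2}$ with the two copies independent. By Theorem~\ref{t:AKS}, $Q^d_{p_1}$ whp has a giant component $L_1$ with $|L_1| \geq \gamma n$ for some $\gamma = \gamma(\epsilon) > 0$, so I may condition on $L_1$ and later expose the sprinkled edges of $Q^d_{p_2}$.

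\textbf{Step 2 (Edge-expansion of $L_1$).} The technical heart of the proof, and the result of independent interest advertised in the abstract, is to show that whp $L_1$ enjoys edge-expansion of order $1/(d^2(\log d)^3)$: for every $S \subseteq V(L_1)$ with $|S| \leq |L_1|/2$,
\[
    e_{L_1}\bigl(S,\, V(L_1)\setminus S\bigr) \;\geq\; \frac{c\,|S|}{d^2(\log d)^3}.
\]
I would prove this by a first-moment/union-bound argument over candidate ``bad separators'' $S$, stratified by $|S|$ in dyadic ranges. The edge-isoperimetric inequality for $Q^d$ guarantees that any $S$ has many $Q^d$-edges leaving it, so a bad $S$ must have almost all of these edges either absent from $Q^d_{p_1}$ or leading to vertices that ultimately fail to join the giant; both events carry exponential penalties that, combined with the isoperimetric count of candidate $S$, make the union bound close.

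\textbf{Step 3 (Long path via DFS).} Given edge-expansion $\alpha = \Omega(1/(d^2(\log d)^3))$ and $\Delta(L_1) \leq d$, a standard depth-first search on $L_1$ produces a path of length $\Omega(\alpha |L_1|/d)$. Indeed, at any moment during the DFS with completed set $S$, current path $P$, and unvisited set $U$, no $L_1$-edges run between $S$ and $U$, so applying the expansion bound to the smaller of $S$ and $U$ and noting that all of its outgoing $L_1$-edges enter $P$ yields $|P|\cdot d \geq \alpha \min(|S|,|U|)$, hence $|P| = \Omega(n/(d^3(\log d)^3))$.

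\textbf{Step 4 (Closing the path into a cycle).} Finally, I would expose $Q^d_{p_2}$ and use P\'osa-type rotations on the long path $P$ found in Step~3 to construct a large set of ``alternative endpoints'' at each end; expansion of $L_1$ guarantees that these endpoint-sets are large enough that whp a sprinkled edge of $Q^d_{p_2}$ joins them, producing a cycle of length $\Omega(|P|) = \Omega(n/(d^3(\log d)^3))$.

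\textbf{Main obstacle.} Step~2 is by far the hardest. The hypercube is notorious for its wealth of subsets of small edge boundary (e.g.\ every subcube has edge boundary equal to its size), so one cannot hope for $d$-independent expansion, and a sharp use of the edge-isoperimetric inequality is needed to control the doubly exponential number of candidate separators while keeping the polylogarithmic loss down to $(\log d)^3$. Steps 3 and 4 are relatively standard once expansion is in hand.
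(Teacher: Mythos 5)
Your high-level plan matches the paper's: prove edge-expansion of the giant, convert to vertex-expansion, and derive a long cycle. Steps~3 and~4 take a somewhat more hands-on route (DFS plus P\'osa rotations) than the paper, which simply black-boxes the implication ``vertex-expansion $\Rightarrow$ long cycle'' via \cite[Theorem~1]{K19a} (Theorem~\ref{t:cycleexpander} here); either route works once expansion is in hand, and the numerics in your Step~3 are consistent with the claimed cycle length.

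The real problem is Step~2, and I think as stated it does not close. You propose a first-moment union bound over candidate bad separators $S \subseteq V(L_1)$, stratified dyadically by $|S|$. But for $|S| = \Theta(n)$ there are $\binom{n}{|S|} = 2^{\Theta(n)}$ candidates, while the edge-isoperimetric inequality for $Q^d$ only guarantees $|\partial_e(S)| = \Theta(n)$ for such sets (e.g.\ a subcube of codimension~$1$ has boundary exactly $n/2$). At density $p_1 = \Theta(1/d)$ the probability that this boundary contributes too few retained edges is only $\exp\bigl(-\Theta(n/d)\bigr)$, which is swamped by the $2^{\Theta(n)}$ choices of $S$. The ``leading to vertices that fail to join the giant'' refinement you mention does not rescue this, because that conditioning is itself very delicate and in any case cannot produce an extra factor of $e^{-\Theta(n)}$ for free. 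In other words, the entropy of the family of separators is too high for a direct union bound.

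What makes the paper's argument close is precisely an entropy-reduction step that is absent from your plan: after exposing a first supercritical round $Q^d_{q_1}$, it uses the structural Theorem~\ref{t:AKSfine} together with the tree-decomposition Lemma~\ref{l:treedecomp} to chop the union of the large $Q^d_{q_1}$-components into a collection $\mathcal{C}$ of connected \emph{pieces} of size roughly $d^{3/2}$, so that $|\mathcal{C}| = O(n\,d^{-3/2})$. The union bound is then taken only over the $2^{O(n\,d^{-3/2})}$ \emph{bipartitions of $\mathcal{C}$}, not over arbitrary subsets of $V(L_1)$; the key observation is that any bad $S$ either cuts through many pieces (and then the first round already contributes a large boundary) or induces such a bipartition up to a negligible error set. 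Against this much smaller family, the sprinkled round $Q^d_{q_2}$ can be shown to create $\Omega(n\,d^{-3/2})$ edge-disjoint crossing paths with failure probability $\exp\bigl(-\Omega(n\,d^{-3/2})\bigr)$, using the fact (from Theorem~\ref{t:AKSfine}) that almost every vertex has $\Omega(d)$ neighbours in large components, together with the \emph{vertex}-isoperimetric matching bound (Theorem~\ref{t:vtxiso}) rather than the edge-isoperimetric inequality. Without the piece decomposition (or an equivalent container/coarsening device) your Step~2 union bound will not balance, so this is a genuine gap rather than a cosmetic difference.
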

	
	\begin{theorem}\label{t:Hadwiger}
	Let $\epsilon > 0$ be a constant and let $p = \frac{1+\epsilon}{d}$. Then whp $Q^d_p$ contains a $K_t$-minor with $t = \Omega \left( \frac{2^{\frac{d}{2}}}{d^3(\log d)^3 } \right)$.
	\end{theorem}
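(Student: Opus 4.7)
The plan is to reduce Theorem \ref{t:Hadwiger} to an edge-isoperimetric inequality for the giant component of $Q^d_p$. As indicated in the abstract, a main technical result of the paper is that whp the largest component $L$ of $Q^d_p$ has $|V(L)|=\Theta(2^d)$ and, moreover, $L$ is a reasonably good edge-expander: there is $\alpha = \Omega(1/\mathrm{polylog}(d))$ such that $e_L(S,V(L)\setminus S) \geq \alpha|S|$ for every $S \subseteq V(L)$ with $|S| \leq |V(L)|/2$. I would then combine this with a deterministic construction turning edge-expansion into a complete minor, in the spirit of the one used for the $G_p$-model in \cite{EKK20}.

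Concretely, set $t$ equal to the target $\Omega\!\left(2^{d/2}/d^3(\log d)^3\right)$ and let $s := |V(L)|/(2t)$, which is $\Theta\!\left(2^{d/2}\, d^3(\log d)^3\right)$. The branch sets are built in two stages. First, choose uniformly at random an ordered partition of $V(L)$ into $t$ blocks $A_1,\ldots,A_t$ of size $s$ and a reservoir $R$ of size $\geq |V(L)|/2$. A union bound over small connected vertex subsets of $L$ (whose count is bounded using the maximum degree $d$ of $Q^d$) combined with the edge-expansion of $L$ shows that whp, for every $i$, the induced subgraph $L[A_i]$ contains a connected subgraph $B_i$ with $|B_i|=\Omega(s)$. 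These $B_i$ are the candidate branch sets.

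Next, we pair them up by paths through $R$. Iterating the edge-expansion of $L$ shows that, from any small seed in $V(L)$, one can reach any other small prescribed set via a path of length $O(d(\log n)/\alpha)$. Processing the $\binom{t}{2}$ pairs one after the other, at each step we find such a path in $L[R\cup B_i \cup B_j]$ linking $B_i$ to $B_j$, and add its interior vertices to $B_i$. The total vertex cost, $O\!\left(t^2 \cdot d \cdot \mathrm{polylog}(d)\right)$, is absorbed in $|R|=\Theta(2^d)$ thanks to the choice of $t$, and the polylogarithmic losses accumulated throughout produce exactly the $d^3(\log d)^3$ factor in the denominator.

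The main obstacle is the first stage: showing that the edge-expansion of $L$ is inherited, with only a polylogarithmic loss, by the induced subgraph on a random block $A_i$, and then deducing from this the existence of a connected core $B_i$ of size $\Omega(s)$ inside $A_i$. This requires one to control, by a union bound, the probability that some small subset of $A_i$ becomes ``cut off'' in $L[A_i]$, using both the edge-expansion of $L$ and the fact that the maximum degree of $Q^d$ is $d$, which keeps the number of small connected subsets of $L$ that must be considered subexponential in their size.
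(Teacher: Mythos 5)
Your approach is genuinely different from the one in the paper, and unfortunately it runs into a fundamental obstacle in this setting. The paper's proof of Theorem~\ref{t:Hadwiger} is a short, indirect reduction: it applies the Kawarabayashi--Reed separator theorem (Theorem~\ref{t:minorexpander}), which states that a graph on $N$ vertices with no $K_t$-minor has a separator of order $O(t\sqrt{N})$ splitting it into pieces of size at most $\tfrac{2}{3}N$, and then plays this off against the vertex-expansion of $L_1$ from Corollary~\ref{c:vertexexpansion}: if every linear-sized piece has vertex boundary $\geq \beta n / (d^3(\log d)^3)$, then any such separator has size $\Omega\bigl(n/(d^3(\log d)^3)\bigr)$, forcing $t\sqrt{n} = \Omega\bigl(n/(d^3(\log d)^3)\bigr)$, i.e.\ $t = \Omega\bigl(\sqrt{n}/(d^3(\log d)^3)\bigr)$. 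No explicit branch sets are constructed.

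The gap in your proposal is in the first stage. The expansion guaranteed by Theorem~\ref{t:edgeexpansion}/Corollary~\ref{c:vertexexpansion} applies only to subsets of $L_1$ of size at least $\alpha n$, and even then gives boundary only $\Omega(n/\mathrm{polylog}(d))$, not proportional expansion. Your blocks $A_i$ have size $s = \Theta\bigl(2^{d/2}d^3(\log d)^3\bigr) = o(n)$, so the expansion of $L_1$ says nothing about $L_1[A_i]$. Worse, $L_1$ is extremely sparse: it has $\Theta(n)$ vertices and $\Theta(n)$ edges (constant average degree). A uniformly random block $A_i$ of size $s$ therefore spans in expectation only about $s^2/n = \Theta\bigl(d^6(\log d)^6\bigr)$ edges of $L_1$, which is vastly smaller than $s$. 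Whp $L_1[A_i]$ is almost entirely isolated vertices, and there is no connected core $B_i$ of size $\Omega(s)$, no matter what union bound you attempt. This is the key structural difference from the $G_p$ model treated in~\cite{EKK20}: there the ambient graph has $d+1$ vertices and one partitions an explicit bounded-degree expander subgraph of order $\Theta(d)$ (from~\cite{K18}), which is dense enough relative to its own size that small random blocks retain large connected pieces. No analogous expander subgraph is available in $L_1\bigl(Q^d_p\bigr)$ with the expansion supplied by this paper. Similarly, the short-diameter claim (``reach any small set from any small seed in $O(d(\log n)/\alpha)$ steps'') would require expansion from sublinear sets, which is again not furnished by the results here. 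The separator-theorem route sidesteps both difficulties entirely.
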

We note that here, and elsewhere in the paper, there may be a dependence on the parameter $\epsilon$ in the implicit constants in the bounds.
	
In fact, both of these theorems follow from a more general statement, of independent interest, which says that whp the largest component of $Q^d_p$ has \emph{good edge-expansion} properties. That is, every large enough subset of the largest component `expands', in the sense that it has a large edge boundary. The notion of graph expansion has turned out to have fundamental importance in diverse areas of discrete mathematics and computer science: For a comprehensive introduction to expander graphs, see the survey of Hoory, Linial and Widgerson \cite{HLW06}. In particular, notions of expansion have turned out to be a powerful tool in the study of random structures: See for example the survey paper of Krivelevich \cite{K19}.

In what follows, given a subset $S \subseteq V\left(Q^d\right)$, we will write $\partial_v(S)$ and $\partial_e(S)$ for the \emph{vertex} and \emph{edge} boundary of $S$ in $Q^d$ respectively. That is, $\partial_v(S)$ is the set of vertices in $V\left(Q^d\right) \setminus S$ which have a neighbour in $S$ and $\partial_e(S)$ is the set of edges between $S$ and $V\left(Q^d\right)\setminus S$. Furthermore, we will write $\partial_{v,p}(S)$ and $\partial_{e,p}(S)$ for the vertex and edge boundary of $S$ in $Q^d_p$ respectively. Also, given constants $\alpha,\beta >0$ and a statement $A$, we will write `Let $\alpha \ll \beta$. Then $A$ holds' to indicate that there is some fixed, implicit function $f$ such that $A$ holds for all $\alpha \leq f(\beta)$.

\begin{theorem}\label{t:edgeexpansion}
	Let $0 < \alpha \ll \epsilon$ be constants, let $p = \frac{1+\epsilon}{d}$ and let $L_1=L_1\left(Q^d_p\right)$ be the largest component of $Q^d_p$. Then there exists a $\beta >0$ such that whp every subset $S \subseteq L_1$ satisfying ${\alpha n \leq |S| \leq\frac{|V(L_1)|}{2}}$ has an edge boundary satisfying $\left|\partial_{e,p}(S)\right| \geq \beta \frac{n}{d^3 (\log d)^2}$.
	\end{theorem}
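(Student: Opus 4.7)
The plan is a two-round sprinkling argument. Write $p_1 := (1+\epsilon/2)/d$ and choose $p_2 = \Theta(1/d)$ so that $1 - (1-p_1)(1-p_2) = p$; then $Q^d_p$ decomposes as $H_1 \cup H_2$ with $H_1 \sim Q^d_{p_1}$ and $H_2 \sim Q^d_{p_2}$ independent. Applying Theorem~\ref{t:AKS} to $H_1$ with parameter $\epsilon/2$, whp $H_1$ admits a giant component $L_1^{(1)}$ of size $\Theta(n)$. A standard sprinkling argument, exploiting that the non-giant components of $H_1$ are of polynomial-in-$d$ size, shows that whp $L_1^{(1)} \subseteq L_1$ and $|L_1 \setminus L_1^{(1)}| = o(n)$. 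Consequently any $S \subseteq L_1$ with $\alpha n \le |S| \le |L_1|/2$ induces a balanced bipartition $(A, B) := (S \cap L_1^{(1)}, L_1^{(1)} \setminus S)$ of $L_1^{(1)}$ with $|A|, |B| \ge \alpha' n$ for a suitable $\alpha' > 0$; since $H_2$-edges across this bipartition lie in $\partial_{e,p}(S)$, it suffices to lower-bound $|E_{H_2}(A, B)|$.

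For a fixed subset $T \subseteq V(Q^d)$ with $|T| \in [\alpha' n, n/2]$, Harper's edge-isoperimetric inequality on $Q^d$ gives $|\partial_e(T)| \ge |T|\log_2(n/|T|) = \Omega(n)$. Since each such edge lies in $Q^d_p$ independently with probability $p = (1+\epsilon)/d$, the random variable $|\partial_{e,p}(T)|$ has expectation $\Omega(n/d)$, and Chernoff yields
\[
\Pr\bigl(|\partial_{e,p}(T)| \le M\bigr) \le \exp\bigl(-\Omega(n/d)\bigr)
\]
for any $M \ll n/d$. This is the per-subset estimate driving the argument.

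The main obstacle is converting this per-subset estimate into a statement about \emph{all} $S \subseteq L_1$ simultaneously, since a naive union bound over $2^{\Theta(n)}$ subsets vastly overwhelms the Chernoff saving. The key is to encode each bad $S$ by the sparse cut it produces in $Q^d_p$: any bad $S \subseteq L_1$ gives rise to an edge set $F := \partial_{e,p}(S) \subseteq E(Q^d_p)$ of size at most $M := \beta n/(d^3(\log d)^2)$, and once $F$ is fixed the number of admissible $S$ is at most $2^{M+1}$ (since $S$ is a union of some of the at most $M+1$ components of $L_1 \setminus F$). Union-bounding over $F \subseteq E(Q^d)$ with $|F| \le M$ (at most $\binom{dn/2}{M}$ choices), and weighting each $(F, S)$-pair by
\[
\Pr\bigl(F \subseteq E(Q^d_p)\bigr) \cdot \Pr\bigl(\partial_e(S) \setminus F \text{ disjoint from } E(Q^d_p)\bigr) \le p^M \cdot \exp\bigl(-\Omega(n/d)\bigr),
\]
the expected number of bad pairs is of order $\exp\bigl(O(M \log d) - \Omega(n/d)\bigr)$, which tends to zero for our choice of $M$.

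The most technical steps lie outside this union-bound skeleton. First, one must control $|L_1 \setminus L_1^{(1)}|$ after sprinkling—a refinement of \cite{AKS81} is needed to ensure that the restriction of $S$ to $L_1^{(1)}$ remains balanced. Second, one must verify the restricted isoperimetric bound $|E_{Q^d}(A, B)| = \Omega(n)$ for arbitrary bipartitions of $L_1^{(1)}$ (rather than the easier bound $|\partial_e(A)| = \Omega(n)$ in all of $V(Q^d)$), which requires using that $L_1^{(1)}$ occupies a constant fraction of $V(Q^d)$ and that the $Q^d$-edges leaving $L_1^{(1)}$ are limited by the sampling in $H_1$. The polylogarithmic factor $d^3(\log d)^2$ in the statement is the natural balance point between $\binom{dn/2}{M} = \exp(O(M \log d))$ and the Chernoff saving $\exp(-\Omega(n/d))$, after absorbing the polynomial losses from these reduction steps.
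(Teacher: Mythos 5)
Your high-level strategy — sprinkle into $H_1 \cup H_2$, reduce to lower-bounding the number of $H_2$-edges crossing an induced bipartition of the first-round giant, and beat the union bound by exploiting the sparsity of the cut — is in the right spirit and shares DNA with the paper's proof. But your union bound does not close. The encoding of a bad $S$ by the pair $(F,\sigma)$, where $F=\partial_{e,p}(S)$ and $\sigma$ picks out components of $L_1-F$, gives at most $2^{M+1}$ choices of $S$ \emph{once $Q^d_p$ is realized}; but $L_1-F$ and hence $\sigma$ are functions of the random graph, so this is not a deterministic cover over which one can sum probabilities. Your per-pair estimate $\Pr(F\subseteq E(Q^d_p))\cdot\Pr(\partial_e(S)\setminus F\text{ disjoint from }E(Q^d_p))\le p^M\exp(-\Omega(n/d))$ is valid only for a \emph{fixed} vertex set $S$; for the $S$ that actually arise — unions of components of $Q^d_p-F$ — the event ``$\partial_e(S)\setminus F$ is disjoint from $E(Q^d_p)$'' holds automatically (by the definition of a union of components), so the Chernoff saving vanishes exactly where you need it. Summing over genuinely fixed $S$ instead forces $\sim 2^n$ terms, which overwhelms $\exp(-\Omega(n/d))$. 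Separately, the ``restricted isoperimetry'' step you flag — showing $|E_{Q^d}(A,B)|=\Omega(n)$ for \emph{every} balanced bipartition of $L_1^{(1)}$ — is a strong structural statement about the first-round giant that is not established anywhere; it is essentially a $Q^d$-edge version of the very expansion the theorem asserts, and Harper's inequality does not give it, since most of $\partial_e(A)$ may leave $L_1^{(1)}$ rather than land in $B$.

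The paper's proof fixes both issues with one move: after exposing $Q_1=Q^d_{q_1}$, it decomposes the large $Q_1$-components into connected pieces of size $\Theta(d^{3/2})$ via a tree-decomposition lemma (Lemma~\ref{l:treedecomp}), so there are only $O(nd^{-3/2})$ pieces and hence at most $2^{O(nd^{-3/2})}$ relevant partitions to union over — and these partitions are measurable with respect to $Q_1$ alone, so the second-round randomness $Q^d_{q_2}$ can deliver an honest Chernoff bound of the form $\exp(-\Omega(nd^{-3/2}))$. Moreover, instead of the (unproven) claim $|E_{Q^d}(A,B)|=\Omega(n)$, the paper builds $\Omega(nd^{-3/2})$ short edge-disjoint $A$--$B$ paths by combining the vertex-isoperimetric matching of size $\Omega(n/\sqrt{d})$ (Theorem~\ref{t:vtxiso}) with Property~\eqref{i:badvtcs} of Theorem~\ref{t:AKSfine}, which guarantees that almost every vertex has $\Omega(d)$ neighbours inside the large pieces; see Claim~\ref{c:manypaths} and Figures~\ref{f:case1} and~\ref{f:case2}. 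The final step (Claim~\ref{c:largeedgeboundary}) handles the pieces straddled by $S$ via the edge-boundary bound of Lemma~\ref{l:edgeboundary}, which is where the residual $d^3(\log d)^2$ factor comes from — not, as you suggest, from balancing $\binom{dn/2}{M}$ against a Chernoff tail.
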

	
As an immediate corollary, since $Q^d$ is $d$-regular, and so every set of $m$ edges in $Q^d$ must span at least $\frac{m}{2d}$ vertices, we can deduce that whp the largest component of $Q^d_p$ also has \emph{good vertex-expansion}. In fact, since in $Q^d_p$ we do not expect significantly fewer than $m$ vertices to span $m$ edges, we can do slightly better.
	
	\begin{corollary}\label{c:vertexexpansion}
	Let $0 < \alpha \ll \epsilon$ be constants, let $p = \frac{1+\epsilon}{d}$ and let $L_1=L_1\left(Q^d_p\right)$ be the largest component of $Q^d_p$. Then there exists a $\beta >0$ such that whp every subset $S \subseteq L_1$ satisfying ${\alpha n \leq |S| \leq\frac{|V(L_1)|}{2}}$ has a vertex boundary satisfying $\left|\partial_{v,p}(S)\right| \geq \beta \frac{n}{d^3(\log d)^3 }$.
	\end{corollary}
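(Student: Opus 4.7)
The plan is to combine Theorem~\ref{t:edgeexpansion} with a whp bound on the number of high-degree vertices of $Q^d_p$. The naive inequality $|\partial_{v,p}(S)| \geq |\partial_{e,p}(S)|/d$ coming from the $d$-regularity of $Q^d$ only yields the weaker bound $\Omega(n/(d^4 (\log d)^2))$, falling short of the stated bound by a factor of order $d/\log d$; this gap has to be closed by observing that almost every vertex of $Q^d_p$ has $Q^d_p$-degree much smaller than $d$. The precise auxiliary estimate I need is the following: setting $D := C \log d / \log \log d$ for a sufficiently large constant $C = C(\epsilon)$, whp the set $H$ of vertices $v$ with $\deg_{Q^d_p}(v) \geq D$ satisfies $|H| \leq n/d^{10}$. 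This is a first-moment calculation based on the standard binomial tail bound
\[
\mathbb{P}\bigl(\deg_{Q^d_p}(v) \geq D\bigr) \leq \left(\frac{e(1+\epsilon)}{D}\right)^{D} = d^{-C(1-o(1))},
\]
together with Markov's inequality applied to $|H|$.

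Given this, fix any $S \subseteq L_1$ with $\alpha n \leq |S| \leq |V(L_1)|/2$ and work on the intersection of the above event with the event supplied by Theorem~\ref{t:edgeexpansion}, which still holds whp by a union bound. Partition $\partial_{v,p}(S) = V_{\mathrm{low}} \cup V_{\mathrm{high}}$, where $V_{\mathrm{high}}$ consists of the vertices whose $Q^d_p$-degree exceeds $D$. Since $V_{\mathrm{high}} \subseteq H$ and each vertex of $Q^d$ has degree $d$, the number of edges of $\partial_{e,p}(S)$ incident to $V_{\mathrm{high}}$ is at most $|H| \cdot d \leq n/d^9$, which is negligible compared with $|\partial_{e,p}(S)| \geq \beta_0 \, n/(d^3(\log d)^2)$ from Theorem~\ref{t:edgeexpansion}. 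Hence at least half of the edges of $\partial_{e,p}(S)$ are incident to $V_{\mathrm{low}}$, and since each vertex of $V_{\mathrm{low}}$ absorbs at most $D$ of them,
\[
|\partial_{v,p}(S)| \;\geq\; |V_{\mathrm{low}}| \;\geq\; \frac{|\partial_{e,p}(S)|}{2D} \;=\; \Omega\!\left(\frac{n \log \log d}{d^3(\log d)^3}\right),
\]
which implies the required bound $|\partial_{v,p}(S)| \geq \beta \, n/(d^3(\log d)^3)$ for a suitable $\beta = \beta(\epsilon) > 0$.

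I do not foresee any real obstacle: the binomial tail estimate is routine, and the high-probability bound on $|H|$ is a global property of $Q^d_p$ independent of $S$, so it combines with Theorem~\ref{t:edgeexpansion} to produce the required uniform statement over all $S$ in the prescribed range.
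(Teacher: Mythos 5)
Your argument is correct and establishes the claim (indeed with a mild $\log\log d$ improvement), but via a slightly different route from the paper. The paper applies its Lemma~\ref{l:edgeboundary}, which states that whp every set $X$ of size at least $nd^{-5}$ has $\left|\partial_{e,p}(X)\right|\leq |X|\log d$, to the set $X=\partial_{v,p}(S)$; then, since every edge of $\partial_{e,p}(S)$ has exactly one endpoint in $\partial_{v,p}(S)$, one gets $\partial_{e,p}(S)\subseteq\partial_{e,p}(X)$, whence $|X|\log d\geq\left|\partial_{e,p}(S)\right|\geq\beta n/(d^3(\log d)^2)$ and the bound follows. You instead isolate the set $H$ of vertices of abnormally high $Q^d_p$-degree (threshold $D\asymp\log d/\log\log d$), bound $|H|$ via the same Chernoff tail estimate (Lemma~\ref{l:Chernoff}~(ii)) and Markov's inequality, and argue pointwise on the remaining vertices. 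Both approaches are really encoding the same phenomenon --- typical $Q^d_p$-degrees are at most polylogarithmic --- but the paper does so through an averaged statement about edge boundaries of large sets, whereas you do so through a pointwise degree cap with a small exceptional set. Your version buys the $\log\log d$ factor, which the paper explicitly notes (just after Claim~\ref{c:degrees}) is available ``with a little more care'' but was not pursued since it does not affect their later applications. One small writing point: verifying that $\partial_{e,p}(S)$ is partitioned by incidence to $V_{\mathrm{low}}\cup V_{\mathrm{high}}$ uses precisely the observation that each boundary edge has exactly one endpoint in $\partial_{v,p}(S)$; it is worth stating this explicitly, as it is the same observation the paper needs for $\partial_{e,p}(S)\subseteq\partial_{e,p}(X)$.
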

	
We note that, whilst it is perhaps not immediately clear that we cannot improve Theorem \ref{t:edgeexpansion} to show constant edge-expansion for linear sized subsets of $L_1 = L_1\left(Q^d_p\right)$, it is clear that we cannot hope to show constant vertex-expansion. Indeed, $Q^d$ contains a separator of size $O\left(\frac{n}{\sqrt{d}}\right)$, namely the middle layer, and it is not hard to show that whp this separator splits $L_1$ roughly in half, and so whp there is a linear sized subset of $L_1$ whose vertex boundary has size $O\left(\frac{n}{\sqrt{d}}\right)$. However, it then follows from a similar argument as in Corollary \ref{c:vertexexpansion} that whp $L_1$ contains some linear sized subset whose edge boundary has size $O\left(\frac{n \log d}{\sqrt{d}}\right)$.

In particular, this implies that we cannot deduce optimal versions of Theorems \ref{t:longcycle} or \ref{t:Hadwiger}, that is, ones without the polynomial factors in $d$, straightforwardly from the expansion properties of the largest component of $Q^d_p$.

In fact, it is perhaps surprising that the expansion properties of $L_1\left(Q^d_p\right)$ are useful at all, since random subgraphs of the hypercube have been used by Moshkowitz and Shapira \cite{MS18} to construct graphs which have in some way the worst possible expansion properties.

Whilst Theorem \ref{t:Hadwiger} already implies that the genus of $Q^d_p$ is $\Omega\left(\frac{n}{d^6 (\log d)^6}\right)$, with a more careful argument we are able to give a better bound. 

\begin{theorem}\label{t:genus}
Let $\epsilon > 0$ be a constant and let $p = \frac{1+\epsilon}{d}$. Then whp the genus of $Q^d_p$ is $\Omega(n)$.
\end{theorem}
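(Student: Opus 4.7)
The plan is to pass from $Q^d_p$ to the kernel of its largest component and apply Euler's formula after removing a negligible number of short cycles. Let $L_1$ denote the largest component of $Q^d_p$, let $K$ be its $2$-core (obtained by iteratively deleting degree-$1$ vertices), and let $K^\star$ be the multigraph obtained from $K$ by suppressing each maximal path of internal degree-$2$ vertices into a single edge. Since deleting pendant trees and suppressing degree-$2$ vertices both preserve the genus, we have $g(Q^d_p) \geq g(L_1) = g(K) = g(K^\star)$, and it suffices to prove $g(K^\star) = \Omega(n)$ whp.

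The first step is to show that $v(K^\star) = \Omega(n)$. Each vertex of $Q^d_p$ has degree distributed as $\mathrm{Bin}(d,(1+\epsilon)/d)$, so the probability of having degree at least $3$ tends to $\mathbb{P}[\mathrm{Po}(1+\epsilon) \geq 3] > 0$. A standard concentration argument yields that whp $Q^d_p$ contains $\Omega(n)$ vertices of degree at least $3$; combining Theorem~\ref{t:AKS} with the edge-expansion estimate of Theorem~\ref{t:edgeexpansion}, a positive ($\epsilon$-dependent) fraction of them lie in $L_1$ and survive the $2$-core contraction, producing kernel vertices of $K^\star$. Hence $v(K^\star) = \Omega(n)$ whp, and since $K^\star$ has minimum degree at least $3$ by construction, $e(K^\star) \geq (3/2) v(K^\star) = \Omega(n)$.

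The second step is to bound the number of short cycles in $K^\star$. A cycle of length $k$ in $K^\star$ corresponds to a simple cycle in $K$ of some length $L \geq k$ containing exactly $k$ kernel vertices (of degree at least $3$ in $K$) and $L-k$ vertices of degree exactly $2$ in $K$. A first-moment calculation over such configurations in $Q^d$, using that each edge of the cycle survives with probability $p = (1+\epsilon)/d$ and that each degree-$2$ vertex on the cycle has probability $(1-p)^{d-2} \sim e^{-(1+\epsilon)}$ of having no further neighbours, shows that the expected number of cycles in $K^\star$ of length at most $k_0$ is $o(n)$ for any fixed constant $k_0$. The key point is that the combinatorial count of $L$-cycles in $Q^d$ grows only like $n\,d^{L/2}$ while each contributes a factor $p^L$, giving a net factor $(1+\epsilon)^L/d^{L/2}$ which decays geometrically in $L$ as $d \to \infty$.

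Removing one edge from each cycle of length at most $6$ in $K^\star$ destroys all such short cycles while deleting only $o(n)$ edges. The resulting graph $K^{\star\star}$ is simple (loops and multi-edges are cycles of length $1$ and $2$ and so have been destroyed), with $v(K^{\star\star}) = v(K^\star) = \Omega(n)$, $e(K^{\star\star}) \geq (3/2 - o(1))\,v(K^{\star\star})$, and girth at least $7$. The Euler formula for a simple graph $H$ of girth $g$ gives $\mathrm{genus}(H) \geq 1 + ((g-2)e(H)/g - v(H))/2$, and taking $g = 7$ yields $\mathrm{genus}(K^{\star\star}) \geq 1 + v(K^{\star\star})/28 - o(n) = \Omega(n)$. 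Since $K^{\star\star} \subseteq K^\star$ and genus is monotone under subgraph inclusion, $g(K^\star) \geq g(K^{\star\star}) = \Omega(n)$, completing the argument. The main obstacle is the first-moment computation for short kernel cycles, which requires carefully tracking both the count of cycles in $Q^d$ passing through specific kernel vertices and the degree constraints imposed on the intermediate vertices.
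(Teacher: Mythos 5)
Your overall architecture—pass to the $2$-core $K$ of $L_1$, then to the kernel $K^\star$, delete $o(n)$ edges to kill short kernel cycles, and apply a girth-sensitive Euler bound—is a genuinely different route from the paper, which instead works directly with $L_1$ via $g(L_1) = \tfrac{1}{2}(\mathrm{excess}(L_1) - f(L_1) + 2)$, lower-bounds the excess by sprinkling, and upper-bounds the number of faces by a first-moment count of short cycles plus a trivial bound on the number of long faces. Your second step (bounding short cycles of $K^\star$) is in the same spirit as the paper's $f(L_1)=o(n)$ bound and is essentially sound, modulo some care with codegree corrections for long underlying cycles, which are suppressed by the geometric factor $((1+\epsilon)e^{-(1+\epsilon)})^L$ anyway.

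However, there is a real gap at the heart of your first step. You need $v(K^\star) = \Omega(n)$ (equivalently, via minimum degree $\ge 3$, $\mathrm{excess}(L_1)=\Omega(n)$), and the justification you offer does not deliver it. Having $\Omega(n)$ vertices of degree $\ge 3$ in $Q^d_p$ is a statement about $Q^d_p$, not about the $2$-core: a vertex of degree $\ge 3$ can still be stripped away during the iterative deletion of pendant trees if all but one of its branches are acyclic, and whether this happens is a global property of the component, not a local one. Neither Theorem~\ref{t:AKS} (which only gives $|V(L_1)|=\Theta(n)$) nor Theorem~\ref{t:edgeexpansion} (which bounds edge boundaries of \emph{linear-sized} subsets of $L_1$) says anything about the $2$-core or the excess; indeed, pendant trees are typically tiny, well below the $\alpha n$ threshold where edge-expansion kicks in, so expansion gives no leverage here. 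This is precisely where the paper does the work you are eliding: it runs a two-round exposure, shows that the first-round giant $L_1'$ spans $\Omega(dn)$ edges of $Q^d$ (via a union bound over candidate vertex sets with small internal density), and then sprinkles to land $\Omega(n)$ extra edges inside $V(L_1')$ beyond a spanning tree, giving $\mathrm{excess}(L_1) = \Omega(n)$. Without this—or some replacement argument—your claim that a positive fraction of degree-$\ge 3$ vertices survive as kernel vertices is unsupported, and the proof does not close. A secondary, fixable issue: after deleting edges, $K^{\star\star}$ may be disconnected and may contain degree-$\le 1$ vertices, so the girth-based Euler inequality needs to be applied component by component with the usual caveats about tree components and faces bounded by non-simple closed walks; this is routine but should be stated.
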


Note that, since whp $\left|E\left(Q^d_p\right)\right| = \Theta(n)$, and clearly the genus of a graph is at most the number of edges, Theorem \ref{t:genus} is optimal up to the value of the leading constant.

The paper is structured as follows. In Section \ref{s:prelim} we collect some lemmas which will be useful in the rest of the paper. Then, in Section \ref{s:expansion} we prove our main results on the expansion properties of the largest component in $Q^d_p$ (Theorem \ref{t:edgeexpansion} and Corollary \ref{c:vertexexpansion}), and use them to deduce Theorems \ref{t:longcycle} and \ref{t:Hadwiger}. In Section \ref{s:genus} we give a proof of Theorem \ref{t:genus}, and then in Section \ref{s:discussion} we mention some open problems.
	
	\section{Preliminaries}\label{s:prelim}
For real numbers $x,y,z$ we will write $x=y\pm z$ to mean that $y-z \leq x \leq y+z$.
	
	We will want to use the following simple lemma, which is a slight adaptation of a result in \cite{KN06}, to decompose a tree into roughly equal sized parts.
	\begin{lemma}\label{l:treedecomp}
	Let $T$ be a tree such that $\Delta(T) \leq C_1$, all but $r$ vertices of $T$ have degree at most $C_2\leq C_1$ and $|V(T)| \geq \ell$ for some $C_1,C_2,\ell,r >0$. Then there exist disjoint vertex sets $A_1,\ldots, A_s \subseteq V(T)$ such that
		\begin{itemize}
			\item $V(T) = \bigcup_{i=1}^s A_i$; 
			\item $T[A_i]$ is connected for each $1 \le i \le s$;
			\item $\ell \leq |A_i| \leq  C_1 \ell$ for each $1 \le i \le r$; and
			\item $\ell \leq |A_i| \leq  C_2 \ell$ for each $r < i \le s$.
		\end{itemize}
	\end{lemma}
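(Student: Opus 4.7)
My plan is to carve $T$ into pieces greedily from the bottom up. Fix an arbitrary root $v_0$ of $T$ and, while the current subtree $T'$ (initially $T$) has at least $\ell$ vertices, select a vertex $v \in V(T')$ of \emph{maximum depth} subject to $|V(T'_v)| \geq \ell$, where $T'_v$ denotes the subtree of $T'$ rooted at $v$. Such a $v$ exists (the root of $T'$ itself satisfies the size condition), and by maximality of depth every child of $v$ in $T'$ has subtree size at most $\ell - 1$. Peel off $V(T'_v)$ as a new piece $A$, delete its vertices from $T'$, and iterate until $|V(T')| < \ell$.

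Each piece $A = V(T'_v)$ produced this way satisfies $|A| \geq \ell$ by construction, and since every child of $v$ in $T'$ has subtree of size at most $\ell - 1$,
\[
|A| \;\leq\; 1 + \deg_T(v)(\ell - 1) \;\leq\; \deg_T(v)\cdot \ell,
\]
and one can save one in the child count whenever $v$ has a parent in $T'$, yielding the tighter $|A| \leq 1 + (\deg_T(v) - 1)(\ell - 1)$. In particular $|A| \leq C_1 \ell$ always, and $|A| \leq C_2 \ell$ whenever $\deg_T(v) \leq C_2$. Since $T$ contains at most $r$ vertices of degree exceeding $C_2$, at most $r$ of the peeled pieces fall into the weaker $C_1 \ell$ bracket; I list those first as $A_1,\dots,A_r$ (padding from the low-degree pieces if fewer than $r$ high-degree cuts occur).

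To handle the leftover $R = V(T')$ at termination, note that $|R| \leq \ell - 1$, and that throughout the process $T[T']$ remains connected (cutting off a rooted subtree from a tree leaves a subtree); hence if $R \neq \emptyset$ then $R$ contains the parent in $T$ of the most recently peeled vertex $v^\star$, so $T[A^\star \cup R]$ is connected, where $A^\star$ is the last peeled piece. I merge $R$ into $A^\star$. Because $v^\star$ must have had a parent in $T'$ at the moment of the last cut (otherwise $v^\star = v_0$ and $R$ would be empty), the tighter bound $|A^\star| \leq (\deg_T(v^\star) - 1)\ell + 1$ applies, and therefore
\[
|A^\star \cup R| \;\leq\; (\deg_T(v^\star) - 1)\ell + 1 + (\ell - 1) \;\leq\; \deg_T(v^\star)\cdot \ell,
\]
so the merged piece still respects the appropriate $C_1\ell$ or $C_2\ell$ upper bound.

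The main delicacy is precisely this last calculation: merging the $\leq \ell - 1$ leftover vertices into a low-degree piece could in principle push it past the $C_2 \ell$ threshold, but the ``parent saves one child'' slack provides just enough room to absorb the leftover. Degenerate cases are handled trivially: if $R = \emptyset$ (e.g.\ when $v^\star = v_0$) no merging is required, and if no cut is ever made (which can happen only when $|V(T)| \leq C_1 \ell$) one simply sets $s = 1$ and $A_1 = V(T)$.
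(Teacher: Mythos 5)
Your proof is correct and takes essentially the same approach as the paper: iteratively peel off the deepest rooted subtree of size at least $\ell$, merge the sub-$\ell$ leftover into the final piece, and exploit the fact that a non-root cut vertex has one fewer child than its degree so that the ``parent saves one child'' slack absorbs the leftover within the $\deg_T(v)\,\ell$ bound. The paper presents the same argument a bit more tersely, simply reordering the pieces by size at the end and referring to the leftover as ``a set of size less than $\ell$''.
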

	\begin{proof}
We choose an arbitrary root $w$ for $T$. For a vertex $v$ in a rooted tree $S$, let us write $S_v$ for the subtree of $S$ rooted at $v$. 

We construct the vertex sets $A_i$ inductively. Let us start by setting $T(0)=T$. Given a tree $T(i)$ rooted at $w$ such that $|V\left(T(i)\right)| \geq \ell$, let $v_i$ be a vertex of maximal distance from $w$ such that $\left|V\left(T(i)_{v_i}\right)\right| \geq \ell$. We take $A_{i+1} = V\left(T(i)_{v_i}\right)$ and let $T(i+1) = T(i) \setminus T(i)_{v_i}$. We stop when $|V\left(T(i)\right)| < \ell$, and in that case we add $V\left(T(i)\right)$ to the final $A_i$. Finally, let us re-order the sets $A_i$ so that they are non-increasing in size.

We claim that the sets $A_1, A_2, \ldots, A_s$ satisfy the conclusion of the lemma. Indeed, the first two properties are clear by construction. Also, we note that by our choice of $v_i$, $|T(i)_x| < \ell$ for every child $x$ of $v_i$. Hence, if $v_i \neq w$, then $v_i$ has $d(v_i)-1$ children and so, since $A_i$ is the union of the vertices of $T(i)_x$ over all children $x$ of $v_i$, together with a set of size less than $\ell$, it follows that $|A_{i+1}| \leq d(v_i) \ell$. Furthermore, if $v_i=w$, then we note that $A_{i+1}=V(T(i))$ and so $|A_{i+1}| \leq d(w)(\ell-1) + 1 \leq d(w) \ell$. Therefore, since all but $r$ vertices of $T$ have degree at most $C_2$, the third and fourth properties also hold.
	\end{proof}

We will also need the following results, which allow us to deduce the existence of a long cycle and a large complete minor from vertex-expansion properties of a graph. For wider context on properties of expanding graphs, see the survey of Krivelevich \cite{K19}.

\begin{theorem}[{\cite[Theorem 1]{K19a}}]\label{t:cycleexpander}
Let $k \geq 1, t \geq 2$ be integers. Let $G$ be a graph on more than $k$ vertices satisfying
\[
\left|\partial_v (W)\right| \geq t, \qquad \text{for every } W \subset V \text{ with } \frac{k}{2} \leq |W| \leq k.
\]
Then $G$ contains a cycle of length at least $t+1$.
\end{theorem}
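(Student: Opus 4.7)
The plan is to combine a Depth-First Search (DFS) argument with Pósa's rotation--extension technique, the standard approach for deducing long cycles from vertex-expansion conditions (see the survey \cite{K19} for context). The argument falls naturally into two parts: first find a long path, then close it into a long cycle.

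For the long path, I would run DFS on $G$ from an arbitrary root and track the classical tripartition of the vertex set into the set $A$ of fully explored vertices, the set $T$ of vertices on the current DFS stack (which always span a path in $G$), and the set $U$ of untouched vertices. The key property of DFS is that no edge of $G$ joins $A$ to $U$. Since $|V(G)| > k$ and $|A|$ increases by one at each step, there is a moment of the DFS at which $k/2 \leq |A| \leq k$; at that moment $\partial_v(A) \subseteq T$, so the expansion hypothesis gives $|T| \geq t$, and hence the DFS stack is a path on at least $t$ vertices of $G$.

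To upgrade the long path to a long cycle, I would apply Pósa rotations. Let $P = v_0 v_1 \cdots v_\ell$ be a \emph{longest} path in $G$ (so in particular $\ell \geq t-1$); by the maximality of $P$, every neighbour of $v_0$ and $v_\ell$ lies on $V(P)$. Fixing $v_\ell$ and iterating Pósa rotations from $v_0$ produces a set $R$ of reachable alternate endpoints; Pósa's classical lemma gives $\partial_v(R) \subseteq V(P) \setminus R$ with $|\partial_v(R)| \leq 2|R|$. If at no stage of the rotations a chord already closes a cycle of length at least $t+1$, then one can show that $R$ grows through sizes in the window $[k/2, k]$, where the expansion hypothesis supplies at least $t$ new boundary vertices that feed further rotations. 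Performing an additional round of rotations from the opposite end of the path (fixing an $r \in R$ as one endpoint) then produces a second large set $R'$ of alternative endpoints, and applying the expansion hypothesis to $R'$ forces an edge between $R$ and $R'$ which, via the corresponding doubly-rotated path of length $\ell$, closes a cycle of length at least $t+1$.

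The technical heart of the argument, and the main obstacle, is the final step: arranging the double rotation so that the edge produced by the expansion hypothesis actually closes a long cycle, rather than a short chord or only extending a path. This is where the assumption that expansion holds throughout the entire \emph{window} $[k/2,k]$, rather than at a single size, is essential, since it allows the two endpoint sets $R$ and $R'$ to be tracked cleanly as they grow through the rotation process and guarantees an expansion-driven edge in the correct regime.
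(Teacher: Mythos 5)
Your first step is sound: running DFS on a component of order greater than $k$ (one must exist, since otherwise some union of components would have size in $[k/2,k]$ with empty boundary) and stopping at the first moment $|A| = \lceil k/2\rceil$ gives $\partial_v(A) \subseteq T$ and hence $|T|\geq t$, i.e.\ a path on at least $t$ vertices. The P\'osa step, however, has a real gap. The hypothesis controls $|\partial_v(W)|$ \emph{only} for $k/2 \leq |W| \leq k$; it says nothing for smaller sets. P\'osa's lemma supplies the \emph{upper} bound $|\partial_v(R)| \leq 2|R|$, and nothing in the given hypothesis forces the endpoint set $R$ to reach size $k/2$ --- it can stall at any small size, so your assertion that ``$R$ grows through sizes in the window $[k/2,k]$'' is unjustified. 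Moreover, a cycle produced by rotation uses vertices of the longest path $P$, and $|V(P)|\geq t$ still falls one short of the required length $t+1$, so even the best case of this route would need an extra idea.

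The cited proof in \cite[Theorem 1]{K19a} avoids rotation entirely and does not first pass through a long path. Take the DFS spanning tree $F$ (rooted at $r$) of the large component. Walking down from $r$ while some child's subtree has order greater than $k$, one reaches a vertex $v$ for which a union $W$ of subtrees rooted at children of $v$ satisfies $k/2\leq |W|\leq k$ (either one child subtree already has order in $[k/2,k]$, or all have order less than $k/2$ and a greedy sum lands in $[k/2,k)$). By the fundamental DFS property that every non-tree edge joins a vertex to one of its ancestors, $\partial_v(W)\subseteq \{v\}\cup\{\text{ancestors of }v\}$, so all $\geq t$ boundary vertices lie on the root-to-$v$ tree path. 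Taking $u$ to be the shallowest of them, the tree path from $u$ to $v$ contains at least $t$ vertices; $u$ has a neighbour $x\in W$, a proper descendant of $v$, so the tree path from $u$ through $v$ to $x$ has length at least $t$, and together with the non-tree edge $ux$ it forms a cycle of length at least $t+1$. It is also worth noting that the present paper does not reprove this statement but simply cites it from \cite{K19a}.
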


The following theorem allows us to deduce the existence of a large complete minor in a graph without any small separators. It is easy to see that graphs with good vertex-expansion do not contain any small separators, and in fact it is known (see \cite[Section 5]{K19}) that the converse is true, in the sense that graphs without small separators must contain large induced subgraphs with good vertex-expansion.

\begin{theorem}[{\cite[Theorem 1.2]{KR10}}]\label{t:minorexpander}
Let $G$ be a graph with $N$ vertices and with no $K_t$-minor. Then $G$ contains a subset $S$ of order $O\left(t \sqrt{N}\right)$ such that each connected component of $G \setminus S$ has at most $\frac{2}{3}N$ vertices.
\end{theorem}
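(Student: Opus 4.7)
The plan is to prove the contrapositive via a BFS-based construction: starting from a graph $G$ with no small balanced separator, produce a $K_t$-minor. Fix an arbitrary vertex $v$ and perform a BFS, obtaining layers $L_0=\{v\}, L_1, \ldots, L_D$. Let $m$ be the smallest index with $\sum_{i\le m}|L_i|\ge N/3$, so that $L_m$ separates a bottom part of size $<N/3$ from a top part, and look for a separator among unions of consecutive layers around $L_m$. The guiding dichotomy is: either some short window of layers near $L_m$ contains a thin layer of size $O(t\sqrt{N})$ that we can add to our separator, or a long window consists entirely of layers of size $\Omega(t\sqrt{N})$, in which case we build a $K_t$-minor from this thick strip.

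In the first case, if $|L_m|=O(t\sqrt N)$ we take $S:=L_m$ when this already leaves both sides of size $\le 2N/3$; otherwise one recurses on the larger side, using that each recursive step removes a chunk of mass $\Omega(N)$ while adding only $O(t\sqrt N)$ to $S$. In the second case, suppose there are $\Theta(\sqrt N/t)$ consecutive layers $L_{m-r}, \ldots, L_{m+r}$ each of size $\ge ct\sqrt N$; inside this strip we produce the $K_t$-minor as follows. First, apply Menger's theorem within the strip to extract $t$ internally vertex-disjoint BFS-monotone paths $P_1,\ldots,P_t$ crossing the strip — these will serve as $t$ branch sets. Then, using the $\Omega(t\sqrt N)$ surplus vertices in each internal layer (after removing the $O(t)$ path vertices in that layer), dedicate $\binom{t}{2}$ of the layers, one per pair $\{i,j\}$, to build a short connector attaching $P_i$ to $P_j$: choose a vertex in that layer, route it to each of $P_i$ and $P_j$ by a short path staying in nearby layers, and absorb the connector into, say, $P_i$'s branch set. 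The abundance of free vertices ($\gg t^2$ per layer) and the rigidity of BFS-adjacencies between consecutive layers make these disjoint rerouting choices feasible, yielding a $K_t$-minor and contradicting the hypothesis.

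The main obstacle is the delicate $K_t$-minor construction in the thick-strip case, specifically getting the separator bound down to the tight $O(t\sqrt N)$. A naive implementation of the branch-set/connector idea tends to waste a factor of $\sqrt{t}$ or $\sqrt{\log t}$ and only gives the Alon--Seymour--Thomas-style bound $O(t^{3/2}\sqrt N)$. To shave this extra factor one likely needs either a tangle-based argument — showing that the absence of a balanced separator of size $k$ forces a tangle of order $k+1$, and converting a tangle of order $\Omega(t\sqrt N)$ into a $K_t$-minor via a Robertson--Seymour-type extraction — or a careful amortized analysis of the recursive BFS scheme that tracks how balanced the two sides become at each step, so that the geometric sum of separator contributions remains $O(t\sqrt N)$. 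Either route shifts the main technical burden onto proving that high-order tangles (equivalently, highly-connected layered substructures) in $K_t$-minor-free graphs are genuinely impossible, which is the heart of the argument.
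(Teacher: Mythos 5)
The paper does not prove this statement; it is cited directly as \cite[Theorem 1.2]{KR10}, so there is no in-paper proof to compare against. Your sketch correctly identifies the classical BFS-layering strategy of Lipton--Tarjan and Alon--Seymour--Thomas as a starting point, and you are right that a naive implementation of it only delivers $O\!\left(t^{3/2}\sqrt{N}\right)$. The genuine gap is exactly the one you flag yourself, and it is not a cosmetic one.

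Concretely, in your thick-strip case the number of consecutive BFS layers each of size at least $ct\sqrt{N}$ is at most $\sqrt{N}/(ct)$ (otherwise the total mass already exceeds $N$), while your connector scheme of dedicating one layer to each unordered pair $\{i,j\}$ requires $\binom{t}{2}=\Omega\!\left(t^2\right)$ layers. These two constraints are only compatible when $N=\Omega\!\left(t^6\right)$, so the construction says nothing in the non-trivial range $t^2 \ll N \ll t^6$. Even for larger $N$ the claim that the ``abundance of free vertices'' makes the disjoint reroutings feasible is not an observation but the crux of the argument; as written you have not exhibited $t$ pairwise touching connected branch sets. You also correctly note that pushing through the tangle/Robertson--Seymour route or a careful amortized recursion is where the real work lies, but neither is carried out. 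In short, the proposal is an honest sketch that accurately diagnoses the difficulty of improving $t^{3/2}$ to $t$, but it does not constitute a proof; the Kawarabayashi--Reed argument that achieves the stated $O\!\left(t\sqrt{N}\right)$ bound is substantially heavier than local surgery on a single BFS tree.
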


We will use the following Chernoff type bounds on the tail probabilities of the binomial distribution, see e.g. \cite[Appendix A]{AS}.
\begin{lemma}\label{l:Chernoff}
Let $N \in \mathbb{N}$, let $p \in [0,1]$ and let $X \sim \text{Bin}(N,p)$.

\begin{enumerate}[(i)]
\item\label{i:chernoff1} For every positive $a$ with $a \leq \frac{Np}{2}$,
\[
\mathbb{P}\left(\left|X -Np \right| > a\right) < 2 \exp\left(-\frac{a^2}{4Np} \right).
\]
\item\label{i:chernoff2} For every positive $b$,
\[
\mathbb{P}\left(X > bNp \right) \leq \left(\frac{e}{b}\right)^{b Np}.
\]
\end{enumerate}
\end{lemma}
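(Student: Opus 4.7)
The plan is to apply the standard exponential moment method (Chernoff's trick) twice, once for each part of the lemma. Writing $X = X_1 + \cdots + X_N$ as a sum of i.i.d.\ Bernoulli$(p)$ variables, the moment generating function factorises and admits the standard bound
\[
\mathbb{E}\bigl[e^{tX}\bigr] = (1 - p + pe^t)^N \leq \exp\bigl(Np(e^t - 1)\bigr),
\]
which follows from $1 + x \leq e^x$ applied coordinatewise. Combining this with Markov's inequality applied to $e^{tX}$ for the upper tail (with $t > 0$) and to $e^{-tX}$ for the lower tail (again with $t > 0$) reduces both parts of the lemma to optimising an exponent over the free parameter $t$.

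For part (ii), I would choose $t = \ln b$, first noting that the claim is vacuous for $b \leq 1$ since then $(e/b)^{bNp} \geq 1$. Substituting $e^t = b$ into the upper tail bound yields
\[
\mathbb{P}(X > bNp) \leq \exp\bigl(Np(b - 1) - (\ln b)\, bNp\bigr) = \Bigl(\tfrac{e^{b-1}}{b^b}\Bigr)^{Np},
\]
which is at most $(e/b)^{bNp}$ after the crude replacement $e^{b-1} \leq e^b$ in the numerator.

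For part (i), the plan is to take $t = a/(2Np)$, so that $|t| \leq 1/4$ under the hypothesis $a \leq Np/2$. On this range the second-order estimate $e^{\pm t} \leq 1 \pm t + t^2$ holds, and substituting it into the upper and lower tail bounds yields in both cases the common expression $\exp(Np\, t^2 - ta)$. The specific choice $t = a/(2Np)$ is precisely the one that minimises this expression in $t$ (by completing the square), producing $\exp\bigl(-a^2/(4Np)\bigr)$ on each tail; a union bound over the two tails then supplies the factor of $2$ in the statement.

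The whole calculation is entirely routine, so there is no genuine obstacle to speak of; the only point that requires any care is the validity of the second-order Taylor estimate $e^{\pm t} \leq 1 \pm t + t^2$, and this is exactly where the assumption $a \leq Np/2$ is used, since it forces the relevant $t$ to lie in the interval on which this estimate holds.
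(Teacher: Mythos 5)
The paper does not prove this lemma at all; it is cited as a standard Chernoff-type bound from Alon and Spencer's book, so there is no paper proof to compare against. Your self-contained argument is the standard exponential-moment proof and it is correct: the MGF bound $\mathbb{E}[e^{tX}]\leq\exp(Np(e^t-1))$ follows from $1+x\leq e^x$ applied to $1+p(e^t-1)$; for part (ii) the choice $t=\ln b$ with the crude replacement $e^{b-1}\leq e^b$ gives exactly $(e/b)^{bNp}$ (and the claim is indeed vacuous for $b\leq 1$); for part (i) the choice $t=a/(2Np)\leq 1/4$ under the hypothesis $a\leq Np/2$ puts $t$ in the range where $e^{\pm t}\leq 1\pm t+t^2$ holds (in fact this holds for all $t\in[0,\ln 2]$), both tails then yield $\exp(-ta+Npt^2)=\exp(-a^2/(4Np))$, and the union bound supplies the factor $2$. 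The only cosmetic point is that the statement asserts a strict inequality in part (i) while the union bound as written gives $\leq$; this is immaterial since Markov's inequality is strict unless $e^{tX}$ is almost surely equal to its threshold, which it is not here.
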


In particular, the following consequence of Lemma \ref{l:Chernoff} on the edge boundary of large sets in $Q^d_p$ will be useful.
\begin{lemma}\label{l:edgeboundary}
Let $k,c,\epsilon>0$ be constants and let $p=\frac{1+\epsilon}{d}$. Then whp every subset $X \subseteq V\left(Q^d\right)$ of size $|X| \geq c n d^{-k}$ has edge boundary in $Q^d_p$ such that
\[
\left|\partial_{e,p}(X) \right| \leq |X| \log d.
\]
\end{lemma}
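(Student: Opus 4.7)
The plan is a direct Chernoff-plus-union-bound argument. First I would fix an integer $m \geq c n d^{-k}$ and a set $X \subseteq V(Q^d)$ of size $m$. Since $Q^d$ is $d$-regular, $|\partial_e(X)| \leq dm$, and $|\partial_{e,p}(X)|$ is distributed as $\text{Bin}(|\partial_e(X)|, p)$, which is stochastically dominated by $\text{Bin}(dm, p)$ of mean $(1+\epsilon)m$. Applying Lemma \ref{l:Chernoff}(\ref{i:chernoff2}) with $b = \log d/(1+\epsilon)$, so that $bNp = m \log d$, gives
\[
\mathbb{P}\bigl(|\partial_{e,p}(X)| > m \log d\bigr) \leq \left(\frac{e(1+\epsilon)}{\log d}\right)^{m \log d}.
\]

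Next I would union bound over the $\binom{n}{m} \leq (en/m)^m$ sets of size $m$. Here the hypothesis $m \geq c n d^{-k}$ is crucial: it forces $\log(en/m) \leq \log(e d^k/c) = O(\log d)$. So for each fixed $m$ the probability that \emph{some} $X$ of size $m$ has $|\partial_{e,p}(X)| > m \log d$ is at most
\[
\exp\!\left(m \log(en/m) + m \log d \cdot \log\frac{e(1+\epsilon)}{\log d}\right) = \exp\bigl(-(1-o(1))\, m \log d \log \log d\bigr).
\]

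Finally I would sum over the at most $n$ values of $m$. Since the exponent above is monotone decreasing in $m$ (the coefficient of $-m$ is at least $(1-o(1))\log d \log\log d > 0$), the sum is bounded by $n$ times its value at $m = cnd^{-k}$, giving
\[
\exp\bigl(d \log 2 - \Omega(n d^{-k} \log d \log\log d)\bigr) = o(1),
\]
because $n/d^k = 2^d/d^k$ grows super-polynomially in $d$ while $\log n$ is only linear.

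No step is a real obstacle; the proof is purely computational. The one point worth flagging is that the slack $\log d$ in the bound $|X| \log d$ is calibrated precisely to overcome the union-bound cost $\log \binom{n}{m} = O(m \log d)$ coming from sets of size as small as $m = cnd^{-k}$, so that a mere constant-factor slack over the mean $(1+\epsilon)m$ would not suffice; this is exactly why a slowly-growing factor of $\log d$ appears in the statement.
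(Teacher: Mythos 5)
Your proof is correct and follows essentially the same route as the paper: bound $|\partial_{e,p}(X)|$ by a $\mathrm{Bin}(d|X|,p)$ random variable, apply Lemma \ref{l:Chernoff}(ii) to get the bound $\left(e(1+\epsilon)/\log d\right)^{|X|\log d}$, and union bound over all sets of size at least $cnd^{-k}$, using that lower bound on $|X|$ to control $\log\binom{n}{|X|}$. The only difference is cosmetic: the paper absorbs the Chernoff bound into a clean estimate $d^{-(k+1)|X|}$ and sums a geometric series directly, while you track the $\log\log d$ in the exponent explicitly and sum over sizes at the end; both yield $o(1)$, and your remark about why the $\log d$ slack is exactly the right calibration is accurate.
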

\begin{proof}
Since any subset $W \subseteq V\left(Q^d\right)$ satisfies $\left|\partial_e(W)\right| \leq d|W|$, it follows from Lemma \ref{l:Chernoff} \eqref{i:chernoff2} that
\begin{align*}
\mathbb{P}\left(\left|\partial_{e,p}(W)\right| \geq |W| \log d\right) &\leq \mathbb{P}\left(\text{Bin}(d |W|,p) \geq |W|\log d\right)\\
&\leq \left(\frac{e(1+\epsilon)}{\log d}\right)^{|W|\log d} \leq d^{-(k+1)|W|}.
\end{align*}
Hence, the probability that there exists a subset $X\subseteq V\left(Q^d_p\right)$ not satisfying the conclusion of the lemma is at most
\[
\sum_{i \geq c n d^{-k}} \binom{n}{i} d^{-(k+1)i} \leq \sum_{i \geq c n d^{-k}}   \left( \frac{en}{d^{k+1}i}\right)^i \leq \sum_{i \geq c n d^{-k}}\left( \frac{e}{cd}\right)^i = o(1).
\]

\end{proof}
We will also need to use the following correlation inequality, which is a consequence of an inequality of Harris \cite{H60} and is itself a special case of the FKG-inequality: See for example \cite[Section, 6]{AS}. 
 
\begin{lemma}\label{l:Harris}
Let $\mathcal{A}$ and $\mathcal{B}$ be families of subgraphs of $Q^d$ which are closed under taking supergraphs. Then 
\[
\mathbb{P}\left(Q^d_p \in \mathcal{A} \cap \mathcal{B}\right) \geq \mathbb{P}\left(Q^d_p \in \mathcal{A} \right)\mathbb{P}\left(Q^d_p \in \mathcal{B}\right).
\]
\end{lemma}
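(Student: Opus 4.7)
The plan is to prove a more general statement by induction on the number of coordinates, from which Lemma \ref{l:Harris} then follows by specialising to indicators of monotone events. I would begin by identifying subgraphs of $Q^d$ with elements of $\{0,1\}^{E(Q^d)}$; under this identification, the supergraph-closure hypothesis on $\mathcal{A}$ and $\mathcal{B}$ is exactly the statement that their indicator functions are non-decreasing in the coordinate-wise order. Writing $\mu_p$ for the product Bernoulli($p$) measure, the lemma thus reduces to showing that $\mathbb{E}_{\mu_p}[fg] \geq \mathbb{E}_{\mu_p}[f]\mathbb{E}_{\mu_p}[g]$ for any two coordinate-wise non-decreasing functions $f,g : \{0,1\}^m \to \mathbb{R}$.

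For the base case $m=1$, a direct computation gives
\[
\mathbb{E}[fg] - \mathbb{E}[f]\mathbb{E}[g] = p(1-p)\bigl(f(1)-f(0)\bigr)\bigl(g(1)-g(0)\bigr) \geq 0,
\]
since both factors on the right are non-negative by monotonicity. For the inductive step, I would condition on the final coordinate $x_m$. Writing $f_i(x_1,\ldots,x_{m-1}) := f(x_1,\ldots,x_{m-1},i)$ and analogously $g_i$ for $i \in \{0,1\}$, each of $f_i, g_i$ is non-decreasing on $\{0,1\}^{m-1}$, so the inductive hypothesis gives $\mathbb{E}[f_i g_i] \geq \mathbb{E}[f_i]\mathbb{E}[g_i]$ for $i=0,1$. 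Setting $F(i) := \mathbb{E}[f_i]$ and $G(i) := \mathbb{E}[g_i]$, the monotonicity of $f$ and $g$ forces $F(0) \leq F(1)$ and $G(0) \leq G(1)$. Applying the $m=1$ case to $F$ and $G$ with respect to the distribution of $x_m$ then yields
\[
(1-p)F(0)G(0) + pF(1)G(1) \geq \bigl((1-p)F(0)+pF(1)\bigr)\bigl((1-p)G(0)+pG(1)\bigr) = \mathbb{E}[f]\mathbb{E}[g],
\]
and combining with the conditional inequality closes the induction. Taking $m = |E(Q^d)|$, $f = \mathbf{1}_{\mathcal{A}}$ and $g = \mathbf{1}_{\mathcal{B}}$ then gives the lemma.

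There is no substantive obstacle here; this is a classical result, and the only care needed is in the initial translation from the supergraph-closure hypothesis to coordinate-wise monotonicity, and in the bookkeeping of the inductive reduction. As an alternative route, one could instead invoke the FKG inequality directly on the distributive lattice $\{0,1\}^{E(Q^d)}$, noting that any product measure is trivially log-supermodular so the FKG condition is automatic.
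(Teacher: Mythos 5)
Your proof is correct. A small observation: in the base case you apply the identity with the same $p$ as the coordinate's marginal, and in the inductive step you reuse the base case with the marginal of $x_m$; since $Q^d_p$ is a genuine product measure with identical marginals this is harmless, but the argument works verbatim with coordinate-dependent parameters, which is worth noting because the conditional measures on $\{0,1\}^{m-1}$ are again product measures and the induction hypothesis applies cleanly. Your translation from supergraph-closure to coordinate-wise monotonicity of the indicator is exactly right, and the step $f_0\le f_1$ pointwise $\Rightarrow F(0)\le F(1)$ is where monotonicity in the last coordinate is consumed.

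The paper, however, does not prove this lemma at all: it simply states it and cites Harris \cite{H60} together with the observation that it is a special case of the FKG inequality, referring the reader to \cite[Section 6]{AS}. So your route is genuinely different in that you supply a self-contained, elementary inductive proof, whereas the paper treats it as a black box. The citation approach is the standard choice in a paper like this, since the result is classical and a full proof would be a distraction; your approach has the advantage of being entirely self-contained and of making visible where the product structure and monotonicity are used, and it also implicitly shows that the statement holds in the greater generality of bounded non-decreasing real-valued functions (not just indicators), which is the form one needs if one ever wants a covariance version rather than a probability version. Your alternative remark about invoking FKG on the distributive lattice $\{0,1\}^{E(Q^d)}$ with a log-supermodular (in fact log-modular) product measure is precisely the reduction the paper is alluding to when it calls the lemma a special case of FKG.
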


Finally, we will use the following result on vertex-isoperimetry in the hypercube, which is a useful strengthening of Harper's \cite{H66} well-known isoperimetric inequality.

\begin{theorem}[{\cite[Corollary 2]{BL97}}]\label{t:vtxiso}
Let $A \subseteq V\left(Q^d\right)$ and let $0 \leq \lambda < 1$ and $k \in \mathbb{N}$ be such that 
\[
|A| = \sum_{i=0}^k \binom{d}{i} + \lambda \binom{d}{k+1} \leq \frac{n}{2}.
\]
Then there is a matching from $A$ to its complement $A^c$ of size at least $(1-\lambda)\binom{d}{k} + \lambda \binom{d}{k+1}$.

In particular, for any subset $A \subset V\left(Q^d\right)$ such that both $A$ and $A^c$ have size $\Omega(n)$, there is a matching from $A$ to $A^c$ of size $\Omega\left(\frac{n}{\sqrt{d}}\right)$.
\end{theorem}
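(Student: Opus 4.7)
The plan is to prove Theorem \ref{t:vtxiso} by a compression (or shifting) argument, the standard tool for vertex-isoperimetric problems on the hypercube. For each coordinate $i \in [d]$ define the $i$-compression $C_i(A)$ that, for every pair $\{u,v\}$ of vertices differing only in coordinate $i$, replaces $v$ (the $1$-end) by $u$ (the $0$-end) whenever exactly one of $u, v$ lies in $A$. The operator preserves $|A|$. The key technical step is to show that $C_i$ does not decrease the size of a maximum matching from $A$ to $A^c$ in $Q^d$: one case-analyses the configurations inside each ``square'' spanned by a pair $\{u,v\}$ and its partner, and when an edge of the matching is parallel to the $i$-direction and would be destroyed by the compression, one reroutes it along a short alternating path.

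After iteratively applying $C_1, \ldots, C_d$ until the process stabilises (which happens in finitely many steps), the resulting set $A^\ast$ is a down-set in the coordinatewise order. A further round of colex-style compressions within each layer reduces $A^\ast$ to an initial segment of the simplicial order: all vertices of Hamming weight at most $k$ together with a specific set of $\lambda \binom{d}{k+1}$ vertices of weight $k+1$, where $k$ and $\lambda$ are the unique parameters for which the cardinalities match.

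For such a canonical set one constructs the matching explicitly. The $(1-\lambda)\binom{d}{k}$ vertices of weight $k$ whose weight-$(k+1)$ neighbours are not all contained in $A^\ast$ can be matched outward by Hall's theorem, using the LYM inequality to verify the Hall condition on subsets of layer $k$. The remaining $\lambda \binom{d}{k+1}$ vertices of $A^\ast$ in layer $k+1$ can each be matched upward to distinct weight-$(k+2)$ vertices via another application of Hall. The two matchings use disjoint vertex sets on both sides of the cut, so combining them yields a matching from $A^\ast$ to $(A^\ast)^c$ of total size at least $(1-\lambda)\binom{d}{k} + \lambda\binom{d}{k+1}$, as required.

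For the ``in particular'' statement, observe that when $|A|, |A^c| = \Omega(n)$, the parameter $k$ determined by $|A| = \sum_{i=0}^{k} \binom{d}{i} + \lambda\binom{d}{k+1} \leq n/2$ must satisfy $|k - d/2| = O(\sqrt{d})$, since for $k$ further from $d/2$ the partial binomial sum is $o(n)$ by standard tail estimates. Hence $\binom{d}{k} = \Theta(n/\sqrt{d})$ by Stirling, giving a matching of size $\Omega(n/\sqrt{d})$. The main obstacle throughout is the matching-preservation claim for compressions: the rerouting argument must handle all configurations in which a compression step swaps a vertex out of $A$ while simultaneously affecting an edge of the optimal matching, and it is this step that requires the most care; the rest of the argument consists of well-known combinatorial manipulations on the layers of the hypercube.
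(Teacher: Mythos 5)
The paper does not actually prove Theorem~\ref{t:vtxiso}: it is quoted verbatim as Corollary~2 of Bollob\'as and Leader \cite{BL97}, and only the ``in particular'' clause is used downstream, via the elementary observation that $|A|=\Omega(n)$ together with $|A|\le n/2$ force $k=d/2-\Theta(\sqrt{d})$ and hence $\binom{d}{k}=\Theta\left(n/\sqrt{d}\right)$. So there is no ``paper proof'' to compare against; your proposal attempts to reprove a cited black box, and as written it has a real gap at its core.

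Your compression lemma is stated with the wrong sign, and more importantly it is not proved. To transfer a lower bound from the canonical initial segment $A^\ast$ back to an arbitrary $A$ of the same cardinality, you need compression to be \emph{non-increasing} on the matching number (so that $A^\ast$ is the \emph{minimiser} of the matching size among sets of fixed cardinality, just as the Hamming ball minimises the vertex boundary in Harper's theorem), not non-decreasing as you claim. And whichever direction is intended, you do not establish it: unlike the vertex-boundary compression lemma, there is no simple square-by-square rerouting here, since a single $i$-compression can move several matched $A$-vertices at once and simultaneously delete several matching edges parallel to direction $i$, and the ``short alternating path'' you invoke is neither exhibited nor obviously available. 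This monotonicity statement carries essentially the entire weight of the theorem, so asserting it without proof leaves the argument incomplete. The endgame also has loose ends: the colex compressions inside each layer need a monotonicity lemma of their own; the assertion that exactly $(1-\lambda)\binom{d}{k}$ weight-$k$ vertices have a weight-$(k+1)$ neighbour outside $A^\ast$ is not correct; and the upward matching from the occupied part of layer $k+1$ into layer $k+2$ requires care when $k+1\ge d/2$. The ``in particular'' deduction, by contrast, is fine.
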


\section{Edge-expansion in the largest component}\label{s:expansion}
We will need some properties of the largest component of $Q^d_p$ given in \cite{AKS81}, where the statement below is claimed, although the details of the proof are not spelled out. For completeness we include a proof in Appendix \ref{a:AKS}.
\begin{theorem}[\cite{AKS81}]\label{t:AKSfine}
Let $0 < \delta_2 \ll \delta_1$, let $p=\frac{1+\delta_1}{d}$ and let $\gamma_1$ be the survival probability of the ${\text{Po}(1+\delta_1)}$ branching process. Then there exist constants $\delta_3,\delta_4,\delta_5>0$, which can be chosen such that $\delta_3 \ll \delta_2$, such that, if $Y$ is the set of vertices of $Q^{d}_p$ contained in components of order at least $\delta_3 d^2$, then whp:
\begin{enumerate}[(i)]
\item\label{i:largecomponent} there is a unique component $L_1$ of order at least $\delta_2 n$ in $Q^d_p$ and $|V(L_1)| = (\gamma_1 \pm \delta_2) n$;
\item\label{i:largevtcs} $| V(L_1) \triangle Y| \leq \delta_2 n$;
\item\label{i:badvtcs} the set of vertices 
\[
X = \left\{ x \in V\left(Q^d\right) \colon \left|\partial_v(x) \cap Y \right| \leq \delta_4 d \right\}
\]
satisfies $|X| \leq 2^{(1-\delta_5)d}$.
\end{enumerate}
\end{theorem}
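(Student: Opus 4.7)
The plan is to combine a branching-process coupling with a standard sprinkling argument. I would write $p = p_0 + p_1 - p_0 p_1$, where $p_0 = (1+\delta_1')/d$ for some $\delta_1' < \delta_1$ chosen so that the survival probability $\gamma_1'$ of $\text{Po}(1+\delta_1')$ lies within $\delta_2/10$ of $\gamma_1$, and $p_1 = \Theta(1/d)$, so that $Q^d_p$ is distributed as $Q^d_{p_0} \cup Q^d_{p_1}$ with $Q^d_{p_0}$ and $Q^d_{p_1}$ independent.

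For the first round, the BFS exploration of a component in $Q^d_{p_0}$ is stochastically sandwiched between two Galton--Watson processes with offspring $\text{Bin}(d, p_0)$, which converges to $\text{Po}(1+\delta_1')$, so truncating at exploration size $\delta_3 d^2$ (for $\delta_3$ sufficiently small) gives $\mathbb{P}(v \in Y_0) = \gamma_1' \pm o(1)$, where $Y_0$ is the set of vertices in $Q^d_{p_0}$-components of size at least $\delta_3 d^2$. A second-moment argument, based on the fact that for pairs $u, v$ at Hamming distance at least $d/10$ the two truncated explorations can be coupled to use essentially disjoint edge sets (giving $\mathbb{P}(u, v \in Y_0) = (\gamma_1')^2 + o(1)$, while the number of close pairs is exponentially smaller than $n^2$), yields $|Y_0| = (\gamma_1' \pm \delta_2/4)n$ whp via Chebyshev.

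For the sprinkling step, Theorem \ref{t:vtxiso} ensures that any $A \subseteq V(Q^d)$ with $|A|, |A^c| \geq \delta_2 n/2$ has $|\partial_e(A)| = \Omega(n/\sqrt{d})$, so at least one such edge lies in $Q^d_{p_1}$ except with probability $\exp(-\Omega(n/d^{3/2}))$. Since there are at most $2^{n/(\delta_3 d^2)}$ partitions of $Y_0$ into unions of its $Q^d_{p_0}$-components, a union bound (for $\delta_3$ small enough) gives that whp all but $\delta_2 n$ vertices of $Y_0$ lie in a single $Q^d_p$-component $L_1$, necessarily the unique one of size at least $\delta_2 n$. Since $L_1 \subseteq Y$, combining with an analogous analysis of $|Y|$ in the full $Q^d_p$ (giving $|Y| = (\gamma_1 \pm \delta_2/4)n$ whp) yields $|V(L_1)| = (\gamma_1 \pm \delta_2)n$ and $|Y \triangle V(L_1)| \leq \delta_2 n$, establishing (i) and (ii). For part (iii), it suffices to show that $\mathbb{P}(x \in X) \leq 2^{-(\delta_5 + \delta_6)d}$ for some $\delta_6 > 0$ and every fixed $x$, since then Markov's inequality delivers $|X| \leq 2^{(1-\delta_5)d}$ whp. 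To this end I would first expose the edges of $Q^d_p$ incident to $x$, then run a branching-process analysis of the components of each neighbour $x^{(i)}$ in $Q^d_p - x$; each lies in $Y$ with probability $\gamma_1 - o(1)$. Provided the $d$ events $\{x^{(i)} \in Y\}$ decorrelate to the extent of stochastically dominating a $\text{Bin}(d, \gamma_1 - o(1))$, Chernoff gives the required $2^{-\Omega(d)}$ bound on the probability that fewer than $\delta_4 d$ of them hold, for any $\delta_4 < \gamma_1$.

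The main obstacle is this last decorrelation: since the $d$ neighbours of $x$ are pairwise at Hamming distance $2$, their component-exploration regions in $Q^d - x$ share vertices (each pair $x^{(i)}, x^{(j)}$ passing through the common neighbour $x^{(i,j)}$), and a naive independent coupling fails. A possible remedy is to truncate each exploration at a carefully chosen depth and to appeal to a BK-Reimer/disjoint-witness inequality on the small explored subgraphs; alternatively one can partition the coordinate set into ``fans'' and argue directly that each restricted sub-exploration remains supercritical. Either way one must keep the surviving probability close to $\gamma_1$ while ensuring enough independence across the $d$ directions. The remaining ingredients---concentration of $|Y_0|$ and $|Y|$, the sprinkling merger, and the Chernoff bounds---are standard.
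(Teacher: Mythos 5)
Your outline runs into two related difficulties, both of which the paper resolves by proving part~\eqref{i:badvtcs} \emph{first} and then feeding it into the sprinkling step; attempting the order \eqref{i:largecomponent}, \eqref{i:largevtcs}, then \eqref{i:badvtcs} as you propose does not work.

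The sprinkling step as you have written it has a gap. Theorem~\ref{t:vtxiso} applied to a part $A$ of the partition of $Y_0$ gives a matching of size $\Omega(n/\sqrt d)$ from $A$ into $A^c$, and whp some matching edge $uv$ (with $u\in A$, $v\in A^c$) lies in $Q^d_{p_1}$. But $A^c=V(Q^d)\setminus A$ is essentially the whole cube, whereas $B$ occupies only a $\gamma_1$-fraction of it; so the matched vertex $v$ typically lies neither in $B$ nor in any large component, and exposing that edge merely hangs an extra vertex off $A$ without connecting $A$ to $B$. To repair this one needs to know that typical vertices of $Q^d$ have $\Omega(d)$ neighbours inside $Y$ -- precisely the content of part~\eqref{i:badvtcs}. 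The paper's Claim~\ref{c:partition} does exactly this: after excluding the small set $X$ of ``bad'' vertices, it splits into the case where the common neighbourhood $D=N(A)\cap N(B)$ is large (then each $x\in D\setminus X$ has a neighbour in both $A$ and $B$ and $\Omega(d)$ neighbours in one of them, giving a length-$2$ path), and the case where $D$ is small (then Theorem~\ref{t:vtxiso} is applied to $N(A)$, and for a matching edge $uv$ avoiding $D\cup X$ one argues $u$ has $\Omega(d)$ neighbours in $A$ and $v$ has $\Omega(d)$ neighbours in $B$, giving a length-$3$ path). Without \eqref{i:badvtcs} in hand, your union bound has nothing comparable to lean on.

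For part~\eqref{i:badvtcs} itself, you correctly identify the decorrelation among the $d$ neighbours of $x$ as the central obstacle, but you do not resolve it, and the BK/Reimer idea is the wrong direction (you need a lower tail, i.e.\ positive rather than negative correlation). The paper's route is quite different and worth noting. First, by Harris's inequality (Lemma~\ref{l:Harris}), the events ``$x^{(i)}$ lies in a $Q^d_{q_1}$-component of order $\geq c_7 d$'' are positively correlated, so their number stochastically dominates $\mathrm{Bin}(d,\hat\gamma-c_6)$ and Chernoff applies immediately (Claim~\ref{c:claim1}). Separately, one uses genuinely disjoint randomness by fixing $c_9 d$ pairwise disjoint subcubes of dimension $(1-c_9)d$, one through each of $c_9 d$ neighbours of $x$, and running the branching process inside each subcube; this gives $\Omega(d)$ disjoint linear-order connected pieces hanging off $x$ (Claim~\ref{c:claim2}). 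A further round of sprinkling then merges these linear pieces into components of quadratic order (Claims~\ref{c:claim3} and~\ref{c:claim4}), which is how the $\delta_3 d^2$ threshold arises and why the paper uses three exposure rounds $q_1,q_2,q_3$ rather than your two. Your ``fans'' idea is in the same spirit as the subcube trick but would need to be developed into this multi-stage boosting to close the gap; the Harris-inequality domination is an ingredient you would also want to add.

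Your second-moment argument for concentration of $|Y_0|$ is a reasonable alternative to what the paper does (the paper bounds the number of vertices in components of order $\leq d$ via Azuma--Hoeffding on the edge-exposure martingale, and obtains the matching lower bound on $|Y|$ by double-counting from property~\eqref{i:badvtcs}), but on its own it does not rescue the sprinkling step, and the claim that explorations from Hamming-distant starting points use ``essentially disjoint edge sets'' would itself need care, since a component of order $\delta_3 d^2$ can have large diameter.
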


With this theorem in hand, let us briefly sketch the strategy to prove Theorem \ref{t:edgeexpansion} on the edge-expansion properties of the largest component of $Q^d_p$. We will use a sprinkling argument, viewing $Q^d_p$ as the union of two independent random subgraphs $Q^d_{q_1}$ and $Q^d_{q_2}$ where $q_1$ is a supercritical probability chosen sufficiently close to $p$ such that, if we denote by $L_1'$ and $L_1$ the largest component in $Q^d_{q_1}$ and $Q^d_{p}$ respectively, then whp $L_1'$ contains most of the vertices in $L_1$, which we can guarantee by Theorem \ref{t:AKSfine}.

After exposing $Q^d_{q_1}$, it will follow from Property \eqref{i:badvtcs} of Theorem \ref{t:AKSfine} that whp most of the vertices contained in components of $Q^d_{q_1}$ of order $\Omega\left(d^2\right)$ are in $L'_1$. Using Lemma \ref{l:treedecomp}, we can split the subgraph of $Q^d_{q_1}$ consisting of the union of these components into a collection $\mathcal{C}$ of connected pieces which all have polynomial size in $d$, specifically almost all the pieces will have size around $d^{\frac{3}{2}}$. We then show that whp for any partition $\cc{C} = \cc{C}_A \cup \cc{C}_B$ of $\cc{C}$ into two parts, each covering a positive proportion of the vertices in $L'_1$, there are exponentially many edge-disjoint paths between vertices in the two partition classes in $Q^d_{q_2}$. We do so via a union bound, showing that the probability that any partition fails to have this property is much smaller than the total number of possible partitions. For partitions in which the two partition classes have many shared neighbours we will use property \eqref{i:badvtcs} of Theorem \ref{t:AKSfine} to find these paths and otherwise we will additionally use Theorem \ref{t:vtxiso}.

Given this, we note that any large subset $S \subseteq V(L_1)$ must either split apart exponentially many of the pieces in $\cc{C}$, in which case for each piece which is split there will be some edge of $Q^d_{q_1}$ present in $\partial_{e,p}(S)$, or there must be a partition $\cc{C} = \cc{C}_A \cup \cc{C}_B$ of $\cc{C}$ into two parts, each covering a positive proportion of $L'_1$, such that each piece in $\cc{C}_A$ is contained in $S$ and almost every piece in $\cc{C}_B$ is disjoint from $S$. Our claim then implies that there are many edge disjoint paths in $Q^d_{q_2}$ between the two partition classes, almost all of which must contribute at least one edge to $\partial_{e,p}(S)$.

\begin{proof}[Proof of Theorem \ref{t:edgeexpansion}]
During the proof we will introduce a series of constants $c_1,c_2,\ldots,$ with the understanding that each $c_i$ will be as small as is necessary for the argument in terms of $\alpha, \epsilon$ and all $c_j$ with $j < i$.

Let us fix $c_2 \ll c_1$, let $\gamma$ be the survival probability of a Po$(1+\epsilon)$ branching process and let us choose $\delta_1 < \epsilon$ such that $\gamma_1$, the survival probability of a Po$(1+\delta_1)$ branching process, is such that $\gamma-\gamma_1 \leq c_2$. We take $\delta_1$ and $\delta_2=c_1$ in Theorem \ref{t:AKSfine} to find $c_2 \gg c_3=\delta_3$, $c_4 = \delta_4$ and $c_5 = \delta_5$. Note that we may assume that $c_5 \ll c_4 \ll c_3$.

Our plan is to expose $Q^d_p$ in two steps, with $q_1= \frac{1+\delta_1}{d}$ and $q_2 = \frac{p-q_1}{1-q_1}$, where we may assume that $q_2 \geq \frac{c_6}{d}$. Let us write $Q_1 = Q^d_{q_1}$ and $Q_2 = Q_1 \cup Q^d_{q_2}$ where the two random subgraphs are chosen independently, so that $Q_2$ has the same distribution as $Q^d_p$.

By Theorem \ref{t:AKSfine}, whp the largest component $L'_1$ in $Q_1$ is such that $|V(L'_1)| = (\gamma_1 \pm c_1) n = (\gamma \pm 2c_1)n$. Furthermore, if $Y$ is the set of vertices of $Q_1$ contained in components of order at least $c_3 d^2$ then whp $|V(L'_1) \triangle Y| \leq c_1 n$ and
\begin{equation}\label{e:X}
X = \left\{ x \in V\left(Q^d\right) \colon |\partial_v(x) \cap Y| \leq c_4 d \right\}
\end{equation}
satisfies $|X| \leq 2^{(1-c_5)d}$.

Recall that $L_1$ is the largest component of $Q^d_p$. We first show that whp the symmetric difference of $V(L_1)$ and $Y$ is small.
\begin{claim}\label{c:giant}
Whp $|V(L_1) \triangle Y| \leq 4c_1n$.
\end{claim}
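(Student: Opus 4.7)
The plan is to apply Theorem~\ref{t:AKSfine} a second time, now directly to $Q^d_p$ (with $\delta_1=\epsilon$ and $\delta_2=c_1$), and then to combine the bounds it provides with those already available for $Q_1$ via the monotone coupling $Q_1\subseteq Q_2$ and the triangle inequality. The key observation will be that because $L'_1$ is a connected subgraph of $Q_2$ whose order is much larger than $c_1 n$, it must sit inside $L_1$; after that everything reduces to comparing the sizes $|V(L_1)|$ and $|V(L'_1)|$.

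More concretely, I will first invoke Theorem~\ref{t:AKSfine} on $Q^d_p$ to deduce that whp $L_1$ is the unique component of $Q^d_p$ of order at least $c_1 n$, and that $|V(L_1)| = (\gamma \pm c_1)n$, where $\gamma$ is the survival probability of the $\mathrm{Po}(1+\epsilon)$ branching process. Since $L'_1$ is connected in $Q_1\subseteq Q_2$, it lies inside some component of $Q_2$; as $|V(L'_1)| \geq (\gamma - 2c_1)n > c_1 n$ provided $c_1$ is small enough compared to $\gamma$, the uniqueness statement forces this component to be $L_1$, so $V(L'_1)\subseteq V(L_1)$. With this inclusion in hand, $V(L_1)\triangle V(L'_1)=V(L_1)\setminus V(L'_1)$, and
\[
|V(L_1)\triangle V(L'_1)| = |V(L_1)|-|V(L'_1)| \leq (\gamma + c_1)n - (\gamma - 2c_1)n = 3c_1 n.
\]
Combining this with the bound $|V(L'_1)\triangle Y|\leq c_1 n$, which already holds whp from the earlier application of Theorem~\ref{t:AKSfine} to $Q_1$, the triangle inequality delivers $|V(L_1)\triangle Y|\leq 4c_1 n$.

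There is no real obstacle here: the argument is essentially an exercise in quantifier chasing combined with the triangle inequality. The only point deserving any care is to check that $c_1$ was chosen small enough, relative to the positive constant $\gamma(\epsilon)$, for the uniqueness in the second application of Theorem~\ref{t:AKSfine} to actually place $L'_1$ inside $L_1$; this should already be guaranteed by the hierarchy of constants set up at the beginning of the proof of Theorem~\ref{t:edgeexpansion}.
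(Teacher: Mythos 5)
Your proof matches the paper's argument essentially verbatim: a second application of Theorem~\ref{t:AKSfine} to $Q^d_p$ with $\delta_1=\epsilon,\ \delta_2=c_1$ gives uniqueness and $|V(L_1)|=(\gamma\pm c_1)n$, the containment $L'_1\subseteq L_1$ follows from $|V(L'_1)|\geq(\gamma-2c_1)n\geq c_1n$, and the triangle inequality via $|V(L'_1)\triangle Y|\leq c_1 n$ finishes. No meaningful difference in approach.
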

\begin{proof}[Proof of Claim \ref{c:giant}]
We apply Theorem \ref{t:AKSfine} with $\delta_1 = \epsilon$ and $\delta_2 = c_1$ to $Q_2 = Q^d_p$. It follows that whp there is a unique component $L_1$ in $Q_2$ of order at least $c_1 n$ and that $|V(L_1)| = (\gamma \pm c_1) n$. However, since $Q_1 \subseteq Q_2$ and $|V(L_1')| \geq (\gamma - 2c_1) n \geq c_1 n$, it follows that $L'_1 \subseteq L_1$. Hence, ${|V(L_1) \setminus V(L'_1)| \leq 3 c_1 n}$, and so $|V(L_1) \triangle Y| \leq 4 c_1 n$.
\end{proof}

Next, we want to use Lemma \ref{l:treedecomp} to split each component of $Q_1$ of order at least $c_3 d^2$ into connected pieces of roughly equal size. In order to do so we need the following bound on the degree sequence of $Q_1$.
\begin{claim}\label{c:degrees}
Whp $Q_1$ contains at most $nd^{-5}$ vertices of degree at least $\log d$.
\end{claim}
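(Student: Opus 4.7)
The plan is to prove Claim \ref{c:degrees} by a straightforward first-moment argument: bound the probability that a single vertex has degree at least $\log d$ in $Q_1$ via Lemma \ref{l:Chernoff}, and then apply Markov's inequality to the total count.

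More precisely, fix a vertex $v \in V(Q^d)$. The degree $d_{Q_1}(v)$ is distributed as $\text{Bin}(d,q_1)$ with mean $dq_1 = 1+\delta_1$. Since $1+\delta_1$ is bounded, we may apply Lemma \ref{l:Chernoff}\eqref{i:chernoff2} with $b = \frac{\log d}{1+\delta_1}$ (which for $d$ large exceeds any desired constant) to obtain
\[
\mathbb{P}\bigl(d_{Q_1}(v) \geq \log d\bigr) \leq \left(\frac{e(1+\delta_1)}{\log d}\right)^{\log d} = d^{-(1+o(1))\log\log d}.
\]

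Let $Z$ denote the number of vertices of $Q_1$ of degree at least $\log d$. By linearity of expectation,
\[
\mathbb{E}[Z] \leq n \cdot d^{-(1+o(1))\log\log d}.
\]
The degrees of different vertices are not independent (two adjacent vertices share an edge of $Q^d$), but a first-moment bound does not require independence. By Markov's inequality,
\[
\mathbb{P}\!\left(Z \geq nd^{-5}\right) \leq \frac{\mathbb{E}[Z]}{nd^{-5}} \leq d^{\,5 - (1+o(1))\log\log d} = o(1),
\]
which yields the claim.

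There is no serious obstacle: the bound $\log d$ is far above the constant mean degree $1+\delta_1$, so Chernoff crushes the single-vertex probability to a super-polynomially small quantity in $d$, which leaves a huge margin over the allowed $d^{-5}$ even after multiplying by the $n$ vertices and applying Markov. The only point to be a little careful about is that one should not try to use a Chernoff-style concentration bound on $Z$ itself (because of the edge-sharing dependencies), but this is unnecessary since the first moment already beats the threshold by more than a polynomial factor.
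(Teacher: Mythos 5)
Your proof is correct and follows essentially the same route as the paper: a single-vertex Chernoff bound via Lemma \ref{l:Chernoff}\eqref{i:chernoff2} giving a super-polynomially small probability, followed by Markov's inequality applied to the expected number of high-degree vertices. The only cosmetic difference is that the paper writes the single-vertex bound as $d^{-\frac{\log\log d}{2}}$ rather than $d^{-(1+o(1))\log\log d}$, but the argument is identical.
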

\begin{proof}[Proof of Claim \ref{c:degrees}]
For any fixed vertex $v \in V\left(Q^d\right)$, the degree of $v$ in $Q_1 = Q^d_{q_1}$ is distributed as Bin$(d,q_1)$, and so by Lemma \ref{l:Chernoff} \eqref{i:chernoff2} we have that
\[
\mathbb{P}\left( d_{Q_1}(v) \geq \log d  \right) \leq \left(\frac{e(1+\delta_1)}{ \log d } \right)^{ \log d } \leq d^{-\frac{\log \log d}{2}}.
\]

It follows that the expected number of vertices in $Q_1$ with degree at least $\log d$ is at most $nd^{-\frac{\log \log d}{2}}$. Hence, by Markov's inequality, whp there at most $nd^{-5}$ vertices with degree at least $\log d$.
\end{proof}
We note that the above argument is rather unoptimised and with a little more care, the bound on the degree of the exceptional vertices could be improved from $\log d$ to $\frac{C \log d}{\log \log d}$ for some suitably large constant $C$. However, for ease of presentation we have not attempted to optimise the logarithmic factors in the proof.

Assuming that Claim \ref{c:degrees} holds, by applying Lemma \ref{l:treedecomp} with $\ell = \frac{1}{c_{10}} d^{\frac{3}{2}}$ to a spanning tree of each component of $Q_1$ of order at least $c_3 d^2$, noting that $\Delta\left(Q_1\right) \leq \Delta\left(Q^d\right) = d$, we can split the collection of these components into connected pieces such that at most $nd^{-5}$ of the pieces have size between $\ell = \frac{1}{c_{10}} d^{\frac{3}{2}}$ and $\ell d = \frac{1}{c_{10}} d^{\frac{5}{2}}$ and the rest have size between $\ell$ and $\ell \log d = \frac{1}{c_{10}} d^\frac{3}{2}\log d$. Let us call this collection of pieces $\cc{C}$, noting that $V\big(\bigcup \cc{C}\big) = Y$. 

We next show that whp for any partition $\cc{C}=\cc{C}_A \cup \cc{C}_B$ of $\cc{C}$ into two parts, each covering a positive proportion of the vertices of $Y$, we expose exponentially many edge-disjoint paths between $A$ and $B$ in the second sprinkling step. This argument will require that the pieces in $\cc{C}$ are not significantly smaller than $d^{\frac{3}{2}}$, whereas later bounds will be optimised by taking the pieces in $\cc{C}$ as small as possible, which motivates our choice of $\ell$ in the above.

\begin{claim}\label{c:manypaths}
Whp for any partition $\cc{C} = \cc{C}_A \cup \cc{C}_B$ of $\cc{C}$ into two parts, with $A = V\big(\bigcup \cc{C}_A\big)$ and $B = V\big(\bigcup \cc{C}_B \big)$, such that $\min\{|A|,|B|\} \geq c_7 n$, there are at least $c_{11} nd^{-\frac{3}{2}}$ edge-disjoint paths between $A$ and $B$ in  $Q^d_{q_2}$.
\end{claim}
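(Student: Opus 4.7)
The plan is to fix a piece-partition $\cc{C} = \cc{C}_A \cup \cc{C}_B$ with vertex sets $A, B$ satisfying $|A|, |B| \geq c_7 n$ and split into two cases depending on how many vertices of $Q^d$ see both $A$ and $B$ substantially. Let $c_8 \leq c_4/2$ and define $V_A := \{v \in V(Q^d) : |\partial_v(v) \cap A| \geq c_8 d\}$, $V_B$ analogously, and $Z := V_A \cap V_B$. Since $Y = A \cup B$, every $v \in V(Q^d) \setminus X$ satisfies $|\partial_v(v) \cap Y| \geq c_4 d$ and hence $|\partial_v(v) \cap A| \geq c_8 d$ or $|\partial_v(v) \cap B| \geq c_8 d$, so $V(Q^d) \setminus X \subseteq V_A \cup V_B$. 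The two cases will be $|Z| \geq c_9 n$ (many shared pivots) and $|Z| < c_9 n$ (few shared pivots), for a small constant $c_9$ to be chosen later.

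In the first case, each $v \in Z$ has $\geq c_8 d$ neighbours in each of $A$ and $B$, and the event that some such $A$-neighbour and some such $B$-neighbour are both connected to $v$ in $Q^d_{q_2}$ has probability at least $(1-e^{-c_6 c_8})^2 > 0$ and yields a length-two $A$--$B$ path through $v$. Restricting pivots to one bipartition class of $Q^d$ (say the even-weight class, which contains at least $|Z|/2$ of them), the resulting events depend on pairwise disjoint sets of edges and are therefore mutually independent, so Lemma~\ref{l:Chernoff} yields $\Omega(n)$ edge-disjoint length-two paths with failure probability $e^{-\Omega(n)}$, which vastly exceeds the target $c_{11} nd^{-3/2}$.

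In the second case, a double-count of $\sum_v |\partial_v(v) \cap B| = d|B| \geq c_7 dn$ against the upper bound $d|V_B| + c_8 d|V(Q^d) \setminus V_B|$ gives $|V_B| \geq (c_7-c_8) n/(1-c_8)$, whence $|U_B| := |V_B \setminus V_A| = \Omega(n)$ using $|Z| < c_9 n$, and symmetrically $|U_A| = \Omega(n)$. Assuming $|V_A| \leq n/2$ (else swap the roles of $A$ and $B$), Theorem~\ref{t:vtxiso} produces a $Q^d$-matching of size $\Omega(n/\sqrt{d})$ from $V_A$ to $V(Q^d) \setminus V_A \subseteq U_B \cup X$; since $|X| \leq 2^{(1-c_5)d} = o(n/\sqrt{d})$, discarding at most $|X|$ edges leaves a $V_A$--$U_B$ matching $M'$ still of size $\Omega(n/\sqrt{d})$. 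For each $\{u,v\} \in M'$, the event that $\{u,v\} \in Q^d_{q_2}$, that some $\{u,a\} \in Q^d_{q_2}$ with $a \in A \cap \partial_v(u)$, and that some $\{v,b\} \in Q^d_{q_2}$ with $b \in B \cap \partial_v(v)$ occurs with probability at least $q_2(1-e^{-c_6 c_8})^2 = \Omega(1/d)$ and yields a length-three $A$--$B$ path in $Q^d_{q_2}$. Hence the expected number of matching edges that extend to such a path is $\Omega(n d^{-3/2})$.

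The main obstacle will be upgrading this expectation to exponential concentration with deviation exponent $\omega(n d^{-3/2})$, as required to beat the union bound over the $2^{|\cc{C}|} = 2^{O(n d^{-3/2})}$ piece-partitions. My plan is to expose $Q^d_{q_2}$ in two stages: first the middle edges $M' \cap Q^d_{q_2}$, a binomial random variable of expectation $\Omega(n d^{-3/2})$ to which Lemma~\ref{l:Chernoff} applies directly with failure probability $e^{-\Omega(n d^{-3/2})}$; then, conditional on these, the outer-edge extension events attached to distinct surviving middle edges. These extension events share edges only through a few structured ``crossings'' (adjacent pairs $u,u'$ both lying in $A$, adjacent pairs $v,v'$ both in $B$, or $u \in B$ adjacent to $v' \in A$), and by either restricting the initial matching to endpoints in $Y^c$ — where all such crossings are ruled out — or by pruning the matching edges involved in the resulting bounded-degree conflict graph, the remaining extension events become genuinely independent and Lemma~\ref{l:Chernoff} supplies the needed exponential tail. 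Distinct middle edges yield length-three paths with disjoint middle edges and, after the pruning, disjoint outer edges, so the paths are edge-disjoint. Finally, choosing the constant $c_{10}$ in $\ell = d^{3/2}/c_{10}$ sufficiently small forces both Chernoff exponents to dominate $|\cc{C}| \ln 2$, and the union bound over piece-partitions completes the proof.
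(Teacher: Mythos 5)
Your overall architecture mirrors the paper's: a dichotomy on a ``shared pivot'' set, with length-two $A$--$B$ paths through pivots in the dense case and an isoperimetric matching plus length-three paths in the sparse case, finished by Chernoff and a union bound over $2^{O(nd^{-3/2})}$ piece-partitions. Your Case 1 is sound (in fact more generous than the paper's, giving $\Omega(n)$ disjoint paths), and the double-count in Case 2 correctly gives $|U_A|,|U_B|=\Omega(n)$.

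The genuine gap is exactly the one you flag at the end, and neither of your two proposed fixes resolves it. The paper avoids it structurally: it sets up the matching from $N(A)$ to $N(A)^c$ (inclusive neighbourhoods), removes endpoints in $D=N(A)\cap N(B)$ and $X$, and only attaches $u$-to-$A$ edges when $u\notin A$ and $v$-to-$B$ edges when $v\notin B$. These four constraints together force $u\notin B$, $v\notin A$, and the ``no side-edges from a vertex already on that side'' rule, which makes every possible overlap $\{u,a\}=\{u',a'\}$, $\{u,a\}=\{v',b'\}$, $\{v,b\}=\{u',a'\}$, $\{v,b\}=\{v',b'\}$ derive a contradiction; so the $E_e$ are pairwise disjoint \emph{by construction} and the success indicators are independent with no loss. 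Your matching runs from $V_A$ to $U_B$, and membership in $V_A$ or $U_B$ says nothing about whether the vertex lies in $A$, in $B$, or outside $Y$; consequently all three crossing types you list can occur, and the ``conflict graph'' on $M'$ has maximum degree $\Theta(d)$ (e.g.\ a single $u\in A$ may have up to $d$ neighbours $u'\in A$ that are matched). Pruning to an independent set therefore shrinks $|M'|$ from $\Omega(nd^{-1/2})$ to $\Omega(nd^{-3/2})$, so the Chernoff exponent drops to $\Omega(nd^{-5/2})$ and is dominated by the $c_{10}nd^{-3/2}\ln 2$ needed to kill the union bound; making $c_{10}$ small does not help because the deficit is a polynomial factor in $d$, not a constant. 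The alternative, restricting $M'$ to edges with both endpoints in $Y^c$, is not justified: the matching supplied by Theorem~\ref{t:vtxiso} from $V_A$ to $V_A^c$ may have almost all of its endpoints in $Y$ (indeed $V_A\cap A$ can be $\Theta(n)$), and you give no lower bound on the surviving matching. Replacing $V_A,V_B$ by the inclusive $Q^d$-neighbourhoods $N(A),N(B)$, discarding $D$-endpoints, and including side-edges only from vertices not already on that side would repair Case 2 and recover the paper's argument.
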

\begin{proof}[Proof of Claim \ref{c:manypaths}]
The total number of pieces in $\cc{C}$ is at most $c_{10}nd^{-\frac{3}{2}}$ and so the total number of possible partitions is at most $2^{c_{10}nd^{-\frac{3}{2}}}$. Hence, if we can show that the claim does not hold for a fixed partition with probability at most $\exp \left(- c_{10}nd^{-\frac{3}{2}}\right)$, then whp the claim holds for all partitions by the union bound.

Given a fixed partition $\cc{C}_A \cup \cc{C}_B$ with $A$ and $B$ as above, let $N(A) = A \cup \partial_{v,q_1} (A)$ be the inclusive neighbourhood of $A$ in $Q_1$ and similarly let $N(B)$ be the inclusive neighbourhood of $B$ in $Q_1$. Let $D = N(A) \cap N(B)$. We split into two cases:
\begin{enumerate}[(i)]
\item\label{i:Dlarge} $|D| \geq c_8 nd^{-\frac{1}{2}}$;
\item\label{i:Dsmall} $|D| < c_8 nd^{-\frac{1}{2}}$.
\end{enumerate}

{\bf Case \eqref{i:Dlarge}:} Let us suppose first that $|D| \geq c_8 nd^{-\frac{1}{2}}$. In this case, let $D' = D \setminus X$, where $X$ is as in \eqref{e:X}, so that $|D'| \geq \frac{|D|}{2}$, since $|X| \leq n^{1-c_5}$. 

Since each $x \in D'$ is in $D$, $x$ is either in $A$ or has a neighbour in $A$ and similarly $x$ is either in $B$ or has a neighbour in $B$. Moreover, since $x \not\in X$ and $A \cup B = Y$, it follows that $x$ has at least $\frac{c_4 d}{2}$ many neighbours in one of $A$ or $B$. For each $x \in D'$, let us fix a set $E_x$ of edges witnessing this, consisting of $\frac{c_4 d}{2}$ edges from $x$ to one of the vertex sets $A$ or $B$ together with a single edge from $x$ to the other vertex set if $x$ is not already an element of that set (see Figure \ref{f:case1}).

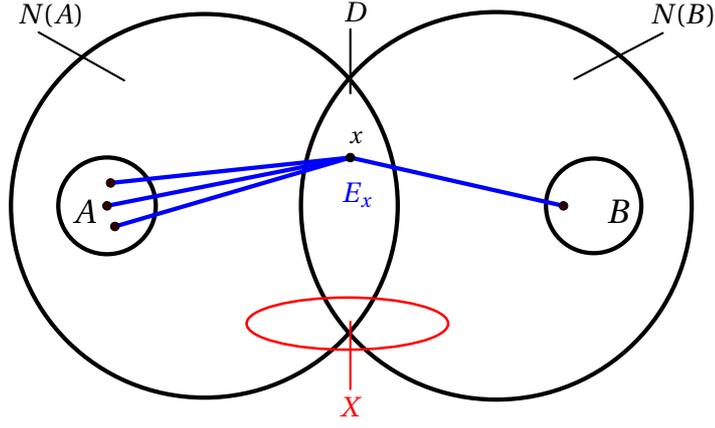
\begin{figure}[ht!]
\center
\scalebox{0.8}{
\begin{tikzpicture}[line cap=round,line join=round,>=triangle 45,x=1cm,y=1cm, scale=0.8]
\draw [line width=2pt] (-3,0) circle (3.973726211001457cm);
\draw [line width=2pt] (3,0) circle (4.014635724446242cm);
\draw [line width=2pt] (-5,0) circle (1.0018482919085105cm);
\draw [line width=2pt] (5,0) circle (0.9800510190801291cm);
\draw (-.2,1.7) node[anchor=north west] {$x$};
\draw (-0.3,4.4) node[anchor=north west] {\large $D$};
\draw [line width=1pt] (0,2.33)-- (0,3.66);
\draw (-5.9,0.3) node[anchor=north west] {\Large $A$};
\draw (5.1,0.3) node[anchor=north west] {\Large $B$};
\draw [rotate around={0:(-0.06,-2.44)},line width=1.2pt,color=ffqqqq] (-0.06,-2.44) ellipse (2.0716606002141353cm and 0.5401644587341691cm);
\draw [color=ffqqqq](-0.4,-3.8) node[anchor=north west] {\large $X$};
\draw [line width=1pt,color=ffqqqq] (0,-2.4)-- (0,-3.8);
\draw [line width=2pt,color=qqqqff] (0,1)-- (4.38,0);
\draw [line width=2pt,color=qqqqff] (0,1)-- (-4.926,0.474);
\draw [line width=2pt,color=qqqqff] (0,1)-- (-5,0);
\draw [line width=2pt,color=qqqqff] (0,1)-- (-4.838,-0.428);
\draw [color=qqqqff](-0.34273971341581727,0.6420739200889095) node[anchor=north west] {\large $E_x$};
\draw (-7,4.35) node[anchor=north west] {\large $N(A)$};
\draw (6,4.35) node[anchor=north west] {\large $N(B)$};
\draw [line width=1pt] (-6.4062538711037,3.5956574004507864)-- (-4.647077751919517,2.598790932913084);
\draw [line width=1pt] (6.421071997947632,3.580997599457585)-- (4.603256674790643,2.628110534899487);
\begin{scriptsize}
\draw [fill=ttqqqq] (-5,0) circle (2.5pt);
\draw [fill=black] (0,1) circle (2.5pt);
\draw [fill=ttqqqq] (4.38,0) circle (2.5pt);
\draw [fill=ttqqqq] (-4.926,0.474) circle (2.5pt);
\draw [fill=ttqqqq] (-4.838,-0.428) circle (2.5pt);
\end{scriptsize}
\end{tikzpicture}
}
\caption{A vertex $x \in D'=D\setminus X$ with corresponding edge set $E_x$ consisting of $\frac{c_4 d}{2}$ edges from $x$ to $A$ and an edge from $x$ to $B$}\label{f:case1}
\end{figure}

Let us choose a subset $D'' \subseteq D'$ of size at least $\frac{|D'|}{2}$ such that all $x \in D''$ have the same parity, and hence the edge sets $E_x$ are disjoint for each $x \in D''$.

Then, for each $x \in D''$, there is a path between $A$ and $B$ using edges from $E_x$ in $Q^d_{q_2}$ with probability at least 
\[
q_2 \left(1 - (1-q_2)^{\frac{c_4 d}{2}}\right)\geq q_2 \left( 1- \exp\left( -\frac{q_2 c_4 d}{2}\right)\right) \geq \frac{c_6\left(1- \exp\left( - \frac{c_4 c_6}{2} \right)\right)}{d} \geq \frac{2c_9}{d},
\]
since $1-x \leq e^{-x}$ for all $x>0$ and $q_2 \geq \frac{c_6}{d}$.

Since the sets $E_x$ are disjoint, these events are independent for different $x \in D''$ and so, by Chernoff type bounds (Lemma \ref{l:Chernoff} \eqref{i:chernoff1}), we have that there are less than $\frac{c_9}{d} |D''| \geq  c_{11} nd^{-\frac{3}{2}}$ such paths with probability at most $2\exp \left( - \frac{c_8c_9 nd^{-\frac{3}{2}}}{32}\right) \leq \exp \left(- c_{10}nd^{-\frac{3}{2}}\right)$.

{\bf Case \eqref{i:Dsmall}:} Let us now deal with the second case, where $|D| < c_8 nd^{-\frac{1}{2}}$. In this case we note that $|N(A)| \geq |A| \geq c_7 n$ and $|N(A) \cap B| \leq |D| \leq c_8  n d^{-\frac{1}{2}}$, and so $|N(A)^c| \geq |B| - |D| \geq \frac{c_7}{2} n$. Therefore, it follows from Theorem \ref{t:vtxiso} that there is a matching $F$ in $Q^d$ of size at least $3 c_8 nd^{-\frac{1}{2}}$ from $N(A)$ to $N(A)^c$.

Since $|X| \ll |D| < c_8 nd^{-\frac{1}{2}}$, if we let $F'$ be those edges in $F$ which do not have an endpoint in either $D$ or $X$ then $|F'| \geq c_8 nd^{-\frac{1}{2}}$. For each edge $uv=e$ in $F'$, where without loss of generality $u \in N(A) \setminus (D \cup X)$, either $u \in A$ or $u$ has at least $c_4 d$ many neighbours in $A$ and similarly either $v \in B$ or $v$ has at least $c_4 d$ many neighbours in $B$. For each $e \in F'$, let us fix a set of edges $E_e$ witnessing this, consisting of $e$ together with  $c_4 d$ edges from $u$ to $A$ if $u \not\in A$ and $c_4d$ edges from $v$ to $B$ if $v \not\in B$ (see Figure \ref{f:case2}).

\begin{figure}[ht!]
\center
\scalebox{0.85}{
\begin{tikzpicture}[line cap=round,line join=round,>=triangle 45,x=1cm,y=1cm, scale=0.8]
\clip(-7.316184854367966,-5.036459304646435) rectangle (9.465115794760276,5.897698261161853);
\draw [line width=2pt] (3,0) circle (3.3842874192853127cm);
\draw [line width=2pt] (-3,0) circle (3.381421569359371cm);
\draw (1.25,.2) node[anchor=north west] {\large $D$};
\draw [line width=1pt] (1.3,-.1)-- (0,.3);
\draw [rotate around={0:(0,-1.5)},line width=1.2pt,color=ffqqqq] (0,-1.5) ellipse (1.2cm and 0.4cm);
\draw [color=ffqqqq](1.45,-1.8) node[anchor=north west] {\large $X$};
\draw [line width=1pt,color=ffqqqq] (1.5,-2.05)-- (0.6,-1.44);
\draw [line width=2pt,color=qqqqff] (-1.3816226920812198,2.4970837563675694)-- (1.4117266172991565,2.4939408427056775);
\draw (-1.4,2.3) node[anchor=north west] {$u$};
\draw (0.9,2.3) node[anchor=north west] {$v$};
\draw [line width=2pt] (-5,0) circle (1.014059040974722cm);
\draw [line width=2pt] (5,0) circle (0.9973217286593371cm);
\draw (-5.9,0.4) node[anchor=north west] {\Large $A$};
\draw (5.2,0.4) node[anchor=north west] {\Large $B$};
\draw [line width=2pt,color=qqqqff] (-1.3816226920812198,2.4970837563675694)-- (-5.003044898313415,0.6256235048101039);
\draw [line width=2pt,color=qqqqff] (-5,0)-- (-1.3816226920812198,2.4970837563675694);
\draw [line width=2pt,color=qqqqff] (-1.3816226920812198,2.4970837563675694)-- (-4.989038340517655,-0.6069535812168142);
\draw [line width=2pt,color=qqqqff] (1.4117266172991565,2.4939408427056775)-- (4.997637367859544,0.5836038314228226);
\draw [line width=2pt,color=qqqqff] (1.4117266172991565,2.4939408427056775)-- (5,0);
\draw [line width=2pt,color=qqqqff] (1.4117266172991565,2.4939408427056775)-- (4.983630810063783,-0.5789404656252933);
\draw [color=qqqqff](-0.3,3.4) node[anchor=north west] {\large $E_e$};
\draw (-6.2,3.75) node[anchor=north west] {\large $N(A)$};
\draw (5,3.75) node[anchor=north west] {\large $N(B)$};
\draw [line width=1pt] (-5.5,3)-- (-4.5,2.5);
\draw [line width=1pt] (5.5,3)-- (4.5,2.5);
\begin{scriptsize}
\draw [fill=ttqqqq] (-1.3816226920812198,2.4970837563675694) circle (2.5pt);
\draw [fill=ttqqqq] (1.4117266172991565,2.4939408427056775) circle (2.5pt);
\draw [fill=ttqqqq] (-5.003044898313415,0.6256235048101039) circle (2.5pt);
\draw [fill=ttqqqq] (-5,0) circle (2.5pt);
\draw [fill=ttqqqq] (-4.989038340517655,-0.6069535812168142) circle (2.5pt);
\draw [fill=ttqqqq] (4.997637367859544,0.5836038314228226) circle (2.5pt);
\draw [fill=ttqqqq] (5,0) circle (2.5pt);
\draw [fill=ttqqqq] (4.983630810063783,-0.5789404656252933) circle (2.5pt);
\end{scriptsize}
\end{tikzpicture}
}
\caption{An edge $uv=e \in F'$ with corresponding edge set $E_e$ consisting of $c_4d$ edges from $u$ to $A$ and $c_4d$ edges from $v$ to $B$.}\label{f:case2}
\end{figure}
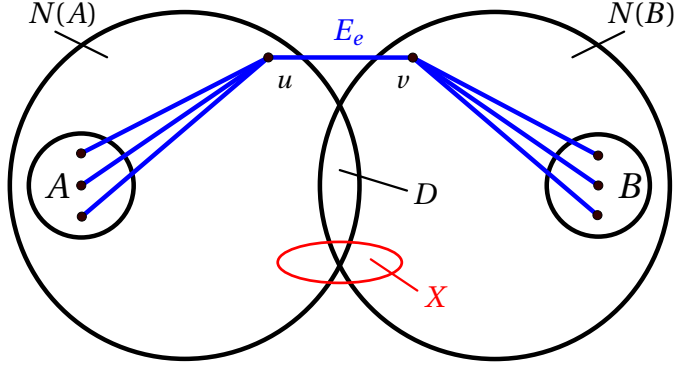

Then, for each $uv=e \in F'$, there is a path in $Q^d_{q_2}$  between $A$ and $B$ using edges from $E_e$ with probability at least
\[
q_2 \left(1-(1-q_2)^{c_4 d}\right)^2 \geq q_2 \left(1-\exp\left(-q_2c_4 d \right)\right)^2 \geq \frac{c_6 \left(1-\exp\left(-c_4 c_6\right)\right)^2}{d} \geq \frac{2 c_9}{d}.
\]
Since the sets $E_e$ are disjoint, these events are independent for different $e\in F'$ and so, by Chernoff type bounds (Lemma \ref{l:Chernoff} \eqref{i:chernoff1}), we have that there are less than $\frac{c_9}{d} |F'| \geq c_{11} nd^{-\frac{3}{2}}$ such paths with probability at most $2\exp \left( - \frac{c_8 c_9 nd^{-\frac{3}{2}}}{16} \right) \leq \exp \left(- c_{10}n d^{-\frac{3}{2}}\right)$.
\end{proof}

Assuming Claims \ref{c:giant} and \ref{c:manypaths}, our aim is to show that the edge boundary of every subset $S \subseteq L_1$ satisfying ${\alpha n \leq |S| \leq \frac{|V(L_1)|}{2}}$ is large. Let us fix some such subset $S$.

\begin{claim}\label{c:largeedgeboundary}
Whp $\left| \partial_{e,p}(S) \right| \geq \frac{c_{12} n}{d^3 (\log d)^2}$.
\end{claim}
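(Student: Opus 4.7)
The plan is to partition the collection $\cc{C}$ of pieces according to their intersection with $S$ and combine two sources of edges in $\partial_{e,p}(S)$: edges of $Q_1$ internal to pieces that are split by $S$, and edges of $Q^d_{q_2}$ provided by the paths guaranteed by Claim \ref{c:manypaths}.

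Formally, I would set $\cc{C}_A := \{C \in \cc{C} : V(C) \subseteq S\}$, $\cc{C}_B^0 := \{C \in \cc{C} : V(C) \cap S = \emptyset\}$, and let $\cc{C}_S := \cc{C} \setminus (\cc{C}_A \cup \cc{C}_B^0)$ be the \emph{split pieces}. Since each $C \in \cc{C}_S$ is connected in $Q_1 \subseteq Q^d_p$ and meets both $S$ and its complement, it contains at least one edge of $\partial_{e,p}(S)$; as the pieces are vertex-disjoint, these edges are pairwise distinct across different split pieces, and hence $|\partial_{e,p}(S)| \geq |\cc{C}_S|$. Setting $T := c_{12} n / (d^3 (\log d)^2)$, the claim is immediate if $|\cc{C}_S| \geq T$.

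Otherwise, the piece-size bounds from our application of Lemma \ref{l:treedecomp} --- at most $nd^{-5}$ pieces of size at most $d^{5/2}/c_{10}$ and the rest of size at most $d^{3/2}\log d /c_{10}$ --- give
\[
\left|V\left(\bigcup \cc{C}_S\right)\right| \leq \frac{n}{c_{10} d^{5/2}} + T \cdot \frac{d^{3/2}\log d}{c_{10}} = O\!\left(\frac{n}{d^{3/2}\log d}\right).
\]
Combined with Claim \ref{c:giant}, this lets me verify that $A := V(\bigcup \cc{C}_A) \subseteq S$ and $B := Y \setminus A$ each have size at least $c_7 n$ for some $c_7 > 0$ depending on $\alpha$ and $\gamma$. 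Applying Claim \ref{c:manypaths} to the 2-partition $\cc{C} = \cc{C}_A \cup (\cc{C}_B^0 \cup \cc{C}_S)$ then yields at least $c_{11} n d^{-3/2}$ edge-disjoint paths between $A$ and $B$ in $Q^d_{q_2}$, each beginning in $A \subseteq S$.

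The main obstacle is that such a path need not contribute an edge to $\partial_{e,p}(S)$: it fails to cross the cut only if it ends inside $S$ at a vertex of some split piece, since $V(\bigcup \cc{C}_B^0) \cap S = \emptyset$. To handle this, I would first establish, by essentially the same Chernoff-type argument as in Claim \ref{c:degrees} applied to $Q^d_p$ rather than $Q_1$, that whp $\Delta(Q^d_p) \leq \log d$. At most $\Delta(Q^d_p)$ edge-disjoint paths can terminate at any given vertex, so the total number of bad paths is at most $\log d \cdot |V(\bigcup \cc{C}_S)| = O(n/d^{3/2})$, and choosing $c_{12}$ sufficiently small bounds this by $c_{11} n d^{-3/2}/2$. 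The remaining $\Omega(n d^{-3/2})$ good paths each contribute a distinct edge to $\partial_{e,p}(S)$, yielding $|\partial_{e,p}(S)| = \Omega(n d^{-3/2}) \geq T$. It is precisely this balance --- the split-piece count against the degree-bounded count of bad endpoints --- that forces the factor $d^3(\log d)^2$ in the denominator of $T$, instead of the naive $d^{3/2}\log d$ suggested by the piece sizes alone.
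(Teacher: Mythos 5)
Your overall structure matches the paper's: split $\cc{C}$ according to its interaction with $S$, handle the case of many split pieces directly via the connectivity of each piece in $Q_1$, and otherwise apply Claim \ref{c:manypaths} to the induced $2$-partition of $\cc{C}$ and discount the paths that fail to cross the cut. The gap is in the discounting step: the assertion that whp $\Delta\left(Q^d_p\right) \leq \log d$ is false. As in the proof of Claim \ref{c:degrees}, the probability that a fixed vertex has degree at least $\log d$ in $Q^d_p$ is roughly $d^{-\frac{\log\log d}{2}}$, and multiplied by $n = 2^d$ this gives an expected count of such vertices that tends to infinity, since $d \gg \log d \cdot \log\log d$. (The typical maximum degree of $Q^d_p$ at $p = \frac{1+\epsilon}{d}$ is $\Theta\left(\frac{d}{\log d}\right)$, not $O(\log d)$.) Claim \ref{c:degrees} only says that all but $nd^{-5}$ vertices have degree at most $\log d$; Markov's inequality cannot be pushed to a maximum-degree bound.

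Your argument is repairable along the lines you intend: the at most $nd^{-5}$ exceptional vertices each have degree at most $d$ in $Q^d$, so they absorb at most $nd^{-4}$ path-endpoints, negligible next to $c_{11}nd^{-\frac{3}{2}}$, and the remaining vertices of $Z \cap S$ can be charged $\log d$ each, recovering the $O\left(n d^{-\frac{3}{2}}\right)$ bound you want. The paper avoids the maximum-degree question entirely: it counts paths \emph{meeting} $Z = V\big(\bigcup\cc{C}_S\big)$ (a superset of your bad paths), and bounds these either by the edges of $Q^d$ incident to $Z$ using $d$-regularity when $|Z|$ is small, or by $\left|\partial_{e,p}(Z)\right| \leq |Z|\log d$ via Lemma \ref{l:edgeboundary} when $|Z|$ is large, since every path starting in $A$ and meeting $Z$ must use an edge of $\partial_{e,p}(Z)$. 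Either route works, but as written your step rests on a false whp statement, so you should replace it by one of these two fixes.
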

\begin{proof}[Proof of Claim \ref{c:largeedgeboundary}]
We first note that by Claim \ref{c:giant} whp $|S \cap  Y| \geq (\alpha- 4c_1)n$. Let us write
\begin{align*}
\cc{C}_0 &= \{ C \in \cc{C} \colon S \cap C \neq \emptyset \text{ and } C \setminus S \neq \emptyset \},\\
\cc{C}_A &= \{C \in \cc{C} \colon C \subset S\},\\
\cc{C}_B &= \{C \in \cc{C} \colon S \cap C = \emptyset\},
\end{align*}
and let $A = V\big(\bigcup \cc{C}_A\big)$ and $B =V\big(\bigcup \cc{C}_B \cup \bigcup \cc{C}_0 \big)$. 

Since each $C \in \cc{C}$ is connected in $Q_1\subseteq Q_2$, there are at least $|\cc{C}_0|$ edges in the edge boundary of $S$ in $Q_2$. We split into two cases:
\begin{enumerate}[(i)]
\item\label{i:Clarge} $|\cc{C}_0| \geq \frac{c_{12} n}{ d^3 (\log d)^2}$;
\item\label{i:Csmall} $|\cc{C}_0| < \frac{c_{12} n}{ d^3 (\log d)^2}$.
\end{enumerate}

{\bf Case \eqref{i:Clarge}:} Suppose first that $|\cc{C}_0| \geq \frac{c_{12} n}{ d^3 (\log d)^2}$. Since each $C \in \cc{C}_0$ is connected in $Q_1$ and meets both $S$ and $V\left(Q^d\right) \setminus S$, it follows that there are at least $\frac{c_{12} n}{d^3 (\log d)^2}$ many edges in the edge boundary of $S$ in $Q_1$ and hence, at least this many in the edge boundary of $S$ in $Q_2$. 

{\bf Case \eqref{i:Csmall}:} Suppose instead that $|\cc{C}_0| < \frac{c_{12} n}{ d^3 (\log d)^2}$. Since there are at most $nd^{-5}$ pieces in $\cc{C}$ of size between $\frac{1}{c_{10}} d^{\frac{3}{2}}$ and $\frac{1}{c_{10}} d^{\frac{5}{2}}$ and the rest have size at most $\frac{1}{c_{10}} d^{\frac{3}{2}} \log d$, it follows that, if we let $Z:=V\big(\bigcup \cc{C}_0\big)$, then
\begin{equation}\label{e:Zbound}
|Z| \leq \frac{2 c_{12} n}{c_{10} d^{\frac{3}{2}} \log d}.
\end{equation}

In particular, we have that $|A| \geq |S| - |Z| \geq c_7 n$ and $|B| \geq |Y| - |S| \geq c_7 n$, and so, by Claim \ref{c:manypaths}, whp there is a family $\cc{P}$ of at least $c_{11} nd^{-\frac{3}{2}}$ many edge disjoint paths between $A$ and $B$ in $Q^d_{q_2}$. 

Then, by Lemma \ref{l:edgeboundary} applied with $k = \frac{5}{2}$, whp either $|Z| \leq \frac{c_{11}}{2}nd^{-\frac{5}{2}}$, or $Z$ has edge boundary satisfying
\[
\left|\partial_{e,p}(Z)\right| \leq |Z| \log d \leq \frac{2c_{12}}{c_{10}} n d^{-\frac{3}{2}} \leq \frac{c_{11}}{2}nd^{-\frac{3}{2}},
\]
where we used \eqref{e:Zbound} to bound $|Z|$ from above.
In either case, since every vertex in $Z$ has degree at most $d$, at most half of the paths in $\cc{P}$ meet $Z$, and every other path in $\cc{P}$ must contribute at least one edge to the boundary of $S$ in $Q_2$, since $A \subseteq S$ and $(B \setminus Z) \cap S = \emptyset$. It follows that the edge boundary of $S$ in $Q_2$ has size at least $\frac{c_{11}}{2}nd^{-\frac{3}{2}} \geq \frac{c_{12} n}{d^3 (\log d)^2}$.
\end{proof}

It follows from Claim \ref{c:largeedgeboundary} that the theorem holds with $\beta = c_{12}$.
\end{proof}

\begin{proof}[Proof of Corollary \ref{c:vertexexpansion}]
Let $\beta$ be given by Theorem \ref{t:edgeexpansion}. By Lemma \ref{l:edgeboundary}, we have that whp every subset $X \subseteq V\left(Q^d\right)$ of size $|X| \geq n d^{-5}$ has edge boundary in $Q^d_p$ such that
\[
\left|\partial_{e,p}(X)\right| \leq |X| \log d.
\]

So, we may assume that the above and the conclusion of Theorem \ref{t:edgeexpansion} hold. Let $S \subseteq L$ be such that ${\alpha n \leq |S| \leq \frac{|V(L_1)|}{2}}$ and let $X = \partial_{v,p}(S)$ be the vertex boundary of $S$ in $Q^d_p$. Then, by our choice of $\beta$, we have $\left|\partial_{e,p}(S)\right| \geq \beta  \frac{n}{d^3 (\log d)^2}$ and so, since $Q^d$ is $d$-regular, it follows that $|X| \geq \frac{n}{d^4 (\log d)^2} \geq n d^{-5}$. Hence, $\left|\partial_{e,p}(X)\right| \leq  |X| \log d$.

However, since each edge in the edge boundary of $S$ is in the edge boundary of $X$ we obtain
\[
  |X| \log d \geq \left|\partial_{e,p}(X)\right| \geq \left|\partial_{e,p}(S)\right| \geq \beta \frac{n}{d^3 (\log d)^2}.
\]
It follows that $|X| \geq \beta \frac{n}{d^3 (\log d)^3}$, and so the corollary follows
\end{proof}

As corollaries, we can deduce Theorems \ref{t:longcycle} and \ref{t:Hadwiger}.

\begin{proof}[Proof of Theorem \ref{t:longcycle}]
By Corollary \ref{c:vertexexpansion} given $\alpha \ll \epsilon$ there exists a $\beta >0$ such that whp every subset $S$ of the largest component $L_1$ of $Q^d_p$ with $\alpha n \leq |S| \leq \frac{|V(L_1)|}{2}$ satisfies $\left|\partial_{v,p}(S)\right| \geq  \frac{\beta n}{d^3 (\log d)^3}$. Hence, applying Theorem \ref{t:cycleexpander} with $k = \frac{|V(L_1)|}{2}$ and $t = \frac{\beta n}{d^3 (\log d)^3}$ we can conclude that $L_1$ contains a cycle of length $\Omega\left( \frac{ n}{d^3 (\log d)^3} \right)$.
\end{proof}

\begin{proof}[Proof of Theorem \ref{t:Hadwiger}]
If the largest component $L_1$ of $Q^d_p$ does not contain a $K_t$-minor, then by Theorem \ref{t:minorexpander} there is some constant $C >0$ such that $L_1$ contains a subset $S$ of size at most $C t \sqrt{|V(L_1)|} \leq Ct \sqrt{n}$, such that each component of $G \setminus S$ has order at most $\frac{2|V(L_1)|}{3}$. It follows that we can find some subset of $L_1$ of size between $\frac{|V(L_1)|}{3}$ and $\frac{|V(L_1)|}{2}$ whose vertex boundary is contained in $S$, and hence has size at most $Ct\sqrt{n}$. 

However, by Corollary \ref{c:vertexexpansion}, whp every subset of $L_1$ of size between $\frac{1}{3}|V(L_1)|$ and $\frac{1}{2}|V(L_1)|$ has a vertex boundary of size at least $\frac{\beta n}{d^3 (\log d)^3}$. It follows that $t \geq\frac{\beta \sqrt{n}}{ Cd^3 (\log d)^3 } = \Omega \left( \frac{\sqrt{n}}{d^3 (\log d)^3}\right)$.
\end{proof}

\section{Genus of the largest component}\label{s:genus}
Given a graph $G$ let us write $g(G)$ for the \emph{genus} of $G$, the smallest $k \in \mathbb{N}$ such that $G$ can be embedded on an orientable surface of genus $k$.

Since the genus of $K_t$ is known to be $\Omega\left(t^2\right)$, and the genus of a graph is a \emph{minor monotone function}, that is, if $H$ is a minor of $G$ then $g(H) \leq g(G)$, it follows from Theorem \ref{t:Hadwiger} that whp $g\left(Q^d_p\right) = \Omega\left(\frac{n}{d^3 (\log d)^3} \right)$ in the supercritical regime. However, if we are more careful, we can improve this to a bound of optimal asymptotic order.

\begin{proof}[Proof of Theorem \ref{t:genus}]
Euler's formula tells us that the genus of a connected graph $L$ satisfies
\begin{equation}\label{e:euler}
g(L) = \frac{1}{2}\left(e(L) - v(L) - f(L) +2\right) = \frac{1}{2}\left(\text{excess}(L) - f(L) +2\right),
\end{equation}
where $e(L)$ and $v(L)$ are the number of edges and vertices of $L$ respectively, excess$(L):= e(L) - v(L)$ is the \emph{excess} of $L$ and $f(L)$ is the number of faces of $L$ when embedded on a surface of minimal genus. We will show that whp the largest component $L_1$ of $Q^d_p$ satisfies
\begin{enumerate}[(i)]
\item\label{i:excess}excess$(L_1) = \Omega(n)$; and
\item\label{i:faces}$f(L_1) = o(n)$,
\end{enumerate}
from which it follows, from \eqref{e:euler}, that whp $g(L_1)$ is also $\Omega(n)$.

{\bf Property \eqref{i:excess}:} To prove that whp excess$(L_1) = \Omega(n)$, we again argue via a two-round exposure. Let $q_1 = \frac{1+\frac{\epsilon}{2}}{d}$, so that $q_2 = \frac{p-p_1}{1-p_1} \geq \frac{\epsilon}{2d}$ and let us write as before $Q_1 = Q^d_{q_1}$ and $Q_2 = Q_1 \cup Q^d_{q_2}$ where the two random subgraphs are independent, so that $Q_2$ has the same distribution as $Q^d_p$.

By Theorem \ref{t:AKS} there exists $\alpha>0$ such that whp there is a component $L'_1$ of $Q_1$ with ${|V(L_1')| \geq \alpha n}$. Since $L'_1$ is connected it contains at least $|V(L_1')| -1$ edges of $Q_1$.

However, if we let $\beta \ll \alpha$ and let $\cc{A}$ be the event that there exists a set of vertices ${W \subseteq V\left(Q^d\right)}$ of size at least $\alpha n$ such that $W$ spans at most $\beta d |W|$ edges in $Q^d$ and $W$ spans at least $|W|-1 \geq \frac{|W|}{2}$ edges of $Q_1$, then it follows, by Lemma \ref{l:Chernoff} \eqref{i:chernoff2}, that
\begin{align*}
\mathbb{P}(\cc{A}) &\leq \sum_{i \geq \alpha n} \binom{n}{i}\mathbb{P}\left(\text{Bin}\left(\left\lfloor \beta d i \right\rfloor, \frac{2}{d}\right) \geq \frac{i}{2}\right) \leq \sum_{i \geq \alpha n} \left(\frac{en}{i}\right)^i (2\beta e)^{\frac{i}{2}} = \sum_{i \geq \alpha n} \left(\frac{\sqrt{2} e^{\frac{3}{2}} \sqrt{\beta}}{\alpha} \right)^i =o(1).
\end{align*}

Hence, we can conclude that whp $L'_1$ spans at least $\beta d |V(L'_1)| \geq \alpha \beta d n$ many edges of $Q^d$.  Let $T$ be a spanning tree of $L'_1$ in $Q_1$, then $L'_1$ spans at least $\alpha \beta d n - |V(T)| \geq \frac{\alpha \beta d n}{2}$ edges in $Q^d - T$.

Hence, again by Chernoff type bounds (Lemma \ref{l:Chernoff} \eqref{i:chernoff1}), we have that the number of edges spanned by $L'_1$ in $Q^d_{q_2} \setminus T$, which we denote by $Y$, is such that
\[
\mathbb{P}\left(Y \leq \frac{\alpha \beta \epsilon n}{8}\right) \leq \mathbb{P}\left( \text{Bin}\left( \left\lceil\frac{\alpha \beta d n}{2}\right\rceil, \frac{\epsilon}{2d} \right) \leq \frac{\alpha \beta \epsilon n}{8}\right) \leq 2\exp\left(-\frac{\alpha \beta \epsilon n}{32}\right) = o(1).
\]

So, we can conclude that whp there is some component $L_1 \supseteq L'_1$ in $Q_2 = Q_1 \cup Q^d_{q_2}$ which has excess at least $\frac{\alpha \beta \epsilon n}{8}$. Note that, by Theorem \ref{t:AKSfine} whp there is a unique linear sized component in $Q_2$ and so whp $L_1$ is in fact the largest component in $Q_2$. Summing up, whp
\begin{equation}\label{e:excess}
\text{excess}(L_1) \geq \frac{\alpha \beta \epsilon n}{8}.
\end{equation}
We note that the typical existence of a component of large excess in $Q^d_p$ can also be deduced in a relatively straightforward manner from Theorem \ref{t:AKSfine}. Indeed, by Theorem \ref{t:AKSfine} almost every vertex in $Q^d$, and so almost every vertex in the giant component $L$ of $Q_1$, has linearly many (in $d$) neighbours in $L$. It will then follow that whp after sprinkling we add linearly many (in $n$) edges to the vertex set of $L$ in $Q_2$.

{\bf Property \eqref{i:faces}:} To prove that whp $f(L_1) = o(n)$, let us consider the expected number of cycles of length $s$ in $Q^d_p$ for a fixed $s$. We note that we can naively bound the number of cycles of length $s$ in $Q^d$ by $2nd^{s-2}$. Indeed, for any vertex $x$ there are at most $d^{s-2}$ many paths of length $s-2$ starting at $x$. However, since the codegree of any pair of vertices in $Q^d$ is at most two, there are at most two ways to complete any path of length $s-2$ to a cycle of length $s$. Since there are at most $n$ many ways to choose the initial vertex $x$, the bound follows. We mention that this bound is far from tight and indeed, without too much effort, it can be shown that the number of cycles of length $s$ in $Q^d$ is at most $2^d\left(\frac{sd}{2}\right)^{\frac{s}{2}}$ (see \cite[Claim 3.2]{MS18}).

Since each such cycle is in $Q^d_p$ with probability $p^s$, and $p = \frac{1+\epsilon}{d}$, it follows that the number of cycles of length $s$, which we denote by $X_s$, satisfies 
\[
\bb{E}(X_s) \leq 2nd^{s-2} p^s = 2n d^{-2}(1+\epsilon)^s.
\]
Hence, the expected number of cycles of length at most $\sqrt{\log d}$ in $Q^d_p$ is at most
\[
\sum_{s=1}^{\sqrt{\log d}} \bb{E}(X_s) \leq 2nd^{-2}\sum_{s=1}^{\sqrt{\log d}} (1+\epsilon)^s \leq 2nd^{-2}\sqrt{\log d} 2^{\sqrt{\log d}}\leq nd^{-1}.
\]
Hence, by Markov's inequality, whp the number of cycles of length at most $\sqrt{\log d}$ in $Q^d_p$ is $o(n)$. It follows that whp the number of faces of length at most $\sqrt{\log d}$ in any embedding of $L_1$ is $o(n)$. Furthermore, since in any embedding each edge is in at most two faces, the number of faces of length at least $\sqrt{\log d}$ in any embedding of $L_1$, which we denote by $Y_{\geq \sqrt{\log d}}$, is at most $\frac{2e(L_1)}{\sqrt{\log d}} \leq \frac{2e\left(Q^d_p\right)}{\sqrt{\log d}}$. However, again by Lemma \ref{l:Chernoff}, we have that whp $e\left(Q^d_p\right) \leq dnp = (1+\epsilon)n$ and so whp
\[
Y_{\geq \sqrt{\log d}} \leq \frac{2e\left(Q^d_p\right)}{\sqrt{\log d}} \leq \frac{2(1+\epsilon)n}{\sqrt{\log d}} = o(n).
\]
Hence, whp $f(L_1) = o(n)$ and so, by Euler's formula \eqref{e:euler} applied to $L_1$, we have
\[
g\left(Q^d_p\right) \geq g(L_1) \geq \frac{1}{2}\left(\text{excess}(L_1) - f(L_1) +2\right) \stackrel{\eqref{e:excess}}{\geq} \frac{1}{2}\left(\frac{\alpha \beta \epsilon n}{8} - o(n) \right) = \Omega(n).
\]
\end{proof}

\section{Discussion}\label{s:discussion}
In Theorems \ref{t:longcycle} and \ref{t:Hadwiger} we have shown that in the supercritical regime when $p=\frac{1+\epsilon}{d}$ for a positive constant $\epsilon$, whp $Q^d_p$ contains a cycle whose length is \emph{almost linear} in $n=2^d$, that is, up to some polylogarithmic term in $n$, and similarly a complete minor of order almost $\sqrt{n}$. However, it seems unlikely that these results are best possible, and analogous to the case of $G(d+1,p)$, it is natural to conjecture that for supercritical $p$ whp $Q^d_p$ contains a cycle whose length is in fact \emph{linear} in $n$, and a complete minor of order $\sqrt{n}$.
\begin{question}Let $\epsilon>0$ and $p=\frac{1+\epsilon}{d}$.
\begin{enumerate}
\item Is it the case that whp $Q^d_p$ contains a cycle of length $\Omega(n)$?
\item Is it the case that whp $Q^d_p$ contains a complete minor of order $\Omega\left(\sqrt{n}\right)$?
\end{enumerate}
\end{question}
Furthermore, in both questions it would also be interesting to know the dependence of the results on $\epsilon$. For example, in $G(d+1,p)$ it is known that for $p = \frac{1+\epsilon}{d}$ the largest component is of order $(2\epsilon + o(\epsilon))d$, the length of the longest cycle is of order $\Theta\left(\epsilon^2\right) d$ (see, for example, \cite[Theorem 5.7]{JLR00}) and the order of the largest complete minor is $\Theta\left(\epsilon^\frac{3}{2}\right) \sqrt{d}$ (see \cite{FKO09}).

As indicated in the introduction, it seems unlikely that such results can be proven simply by considering the expansion properties of the largest component. However, it is still an interesting question precisely how strong expansion properties of the largest component we can guarantee whp. For example, Krivelevich \cite{K18} showed that in the supercritical regime $p= \frac{1+\epsilon}{d}$ in $G(d+1,p)$, whp the largest component contains a subgraph of order $\Theta(d)$ which is a bounded degree $\alpha$-expander for some $\alpha =\alpha(\epsilon) > 0$, where a graph $G$ is an \emph{$\alpha$-expander} if for every subset $W \subseteq V(G)$ with $|W| \leq \frac{|V(G)|}{2}$ we have that $\left|\partial_v(W)\right| \geq \alpha |W|$.

\begin{question}\leavevmode
Let $\epsilon >0$ and $p = \frac{1+\epsilon}{d}$. For what $\alpha =\alpha(d)$ is it true that whp the largest component of $Q^d_p$ contains a subgraph of order $\Theta(n)$ which is an $\alpha$-expander?
\end{question}
We note that by a similar argument as in \cite[Lemma 2.4]{K19}, Corollary \ref{c:vertexexpansion} implies that we can take $\alpha(d) = \Omega\left(\frac{1}{d^3 (\log d)^3}\right)$, however it seems unlikely that this is optimal.

A careful examination of the proof of Theorem \ref{t:genus} shows that whp the genus of the largest component $L_1$ of $Q^d_p$ is asymptotically equal to half of the excess of $L_1$. In the case of ${G(d+1,p)}$, in the supercritical regime, it can be shown that if we delete the largest component, what is left has approximately the distribution of a subcritical random graph, and so whp all other components are unicyclic or trees. From this it follows that the genus of $G(d+1,p)$ is asymptotically equal to the genus of its largest component. Furthermore, by Euler's formula for an arbitrary graph $G$
\[
g(G) = \frac{1}{2}\left(e(G) - v(G) - f(G) + \kappa(G) + 1\right),
\]
where $\kappa(G)$ is the number of components of $G$, and by counting carefully the number of tree-components it is possible to determine asymptotically the genus of $G(d+1,p)$ in this regime, and also the excess of the giant component (see \cite[Theorems 1.2 and 1.3]{DKK20}).

However, it is not the case that whp $Q^d_p$ is planar for an arbitrary subcritical $p$. For example, it is easy to see that whp $Q^4 \subseteq Q^d_p$ when $p= \Theta\left(\frac{1}{d}\right)$, and $Q^4$ is non-planar. So, even if such a `symmetry rule' were to hold in this model, it would not be immediate that the main contribution to the genus of $Q^d_p$ is coming from the excess of the largest component. It would be interesting to determine asymptotically the genus of $Q^d_p$ in the supercritical regime.

There are some interesting open questions about the model $Q^d_p$ in the paper of Condon, Espuny D{\'\i}az, Girao, K{\"u}hn and Osthus \cite{CDGKO21}. In particular, they used as a crucial part of their proof the fact that whp $Q^d_{\frac{1}{2}}$ contains an `almost spanning' path, that is, a path containing $(1-o(1))n$ vertices, and they showed that this property is in fact true for $Q^d_p$ for any constant $p$. 

However, analogous to the case of $G(d+1,p)$, we should perhaps expect such a path to exist for much smaller values of $p$. In particular, if we expect the sparse random subgraph $Q^d_{p}$ with $p=\frac{c}{d}$ to contain a path of linear length $f(c)n$ for some function $f(c)$ when $c > 1$, it is natural to conjecture that $f(c) \to 1$ as $c \rightarrow \infty$.

\begin{question}
Let $p = \omega\left(\frac{1}{d}\right)$. Is it true that whp $Q^d_p$ contains a path of length $(1-o(1))n$?
\end{question}

Finally, other notions of random subgraphs of the hypercube  $Q^d$ have also been studied. In particular, if we let $Q^d(p)$ denote a random \emph{induced} subgraph of $Q^d$ obtained by retaining each \emph{vertex} independently with probability $p$, then the typical existence of a giant component in $Q^d(p)$ when $p=\frac{1+\epsilon}{d}$ for a fixed $\epsilon >0$ was shown by Bollob\'{a}s, Kohayakawa and {\L}uczak \cite{BKL94}, and this was extended to a broader range of $p$ with $\epsilon=o(1)$ by Reidys \cite{R09}. It would be interesting to know if the giant component in $Q^d(p)$ whp also has good expansion properties.

	\bibliographystyle{plain}
	\bibliography{cube}

\begin{thebibliography}{10}

\bibitem{AKS81}
M.~Ajtai, J.~Koml\'{o}s, and E.~Szemer\'{e}di.
\newblock Largest random component of a {$k$}-cube.
\newblock {\em Combinatorica}, 2(1):1--7, 1982.

\bibitem{AS}
N.~Alon and J.~H. Spencer.
\newblock {\em The probabilistic method}.
\newblock Wiley Series in Discrete Mathematics and Optimization. John Wiley
  {\&} Sons, Inc., Hoboken, NJ, fourth edition, 2016.

\bibitem{BH65}
L.~W. Beineke and F.~Harary.
\newblock The genus of the {$n$}-cube.
\newblock {\em Canadian J. Math.}, 17:494--496, 1965.

\bibitem{B83}
B.~Bollob\'{a}s.
\newblock The evolution of the cube.
\newblock In {\em Combinatorial mathematics ({M}arseille-{L}uminy, 1981)},
  volume~75 of {\em North-Holland Math. Stud.}, pages 91--97. North-Holland,
  Amsterdam, 1983.

\bibitem{B90}
B.~Bollob\'{a}s.
\newblock Complete matchings in random subgraphs of the cube.
\newblock {\em Random Structures {\&} Algorithms}, 1(1):95--104, 1990.

\bibitem{B01}
B.~Bollob\'{a}s.
\newblock {\em Random graphs}, volume~73 of {\em Cambridge Studies in Advanced
  Mathematics}.
\newblock Cambridge University Press, second edition, 2001.

\bibitem{BKL92}
B.~Bollob\'{a}s, Y.~Kohayakawa, and T.~{\L}uczak.
\newblock The evolution of random subgraphs of the cube.
\newblock {\em Random Structures {\&} Algorithms}, 3(1):55--90, 1992.

\bibitem{BKL94}
B.~Bollob\'{a}s, Y.~Kohayakawa, and T.~{\L}uczak.
\newblock On the evolution of random {B}oolean functions.
\newblock In {\em Extremal problems for finite sets ({V}isegr\'{a}d, 1991)},
  volume~3 of {\em Bolyai Soc. Math. Stud.}, pages 137--156. J\'{a}nos Bolyai
  Math. Soc., Budapest, 1994.

\bibitem{BL97}
B.~Bollob\'{a}s and I.~Leader.
\newblock Matchings and paths in the cube.
\newblock {\em Discrete Appl. Math.}, 75(1):1--8, 1997.

\bibitem{BR12}
B.~Bollob\'{a}s and O.~Riordan.
\newblock A simple branching process approach to the phase transition in
  {$G_{n,p}$}.
\newblock {\em Electron. J. Combin.}, 19(4):Paper 21, 8, 2012.

\bibitem{BCVSS06}
C.~Borgs, J.~T. Chayes, R.~van~der Hofstad, G.~Slade, and J.~Spencer.
\newblock Random subgraphs of finite graphs. {III}. {T}he phase transition for
  the {$n$}-cube.
\newblock {\em Combinatorica}, 26(4):395--410, 2006.

\bibitem{B77}
Ju.~D. Burtin.
\newblock The probability of connectedness of a random subgraph of an
  {$n$}-dimensional cube.
\newblock {\em Problemy Pereda\v{c}i Informacii}, 13(2):90--95, 1977.

\bibitem{CS07}
L.~S. Chandran and N.~Sivadasan.
\newblock On the {H}adwiger's conjecture for graph products.
\newblock {\em Discrete Math.}, 307(2):266--273, 2007.

\bibitem{CDGKO21}
P.~Condon, A.~Espuny~D{\'\i}az, A.~Gir{\~a}o, D.~K{\"u}hn, and D.~Osthus.
\newblock Hamiltonicity of random subgraphs of the hypercube.
\newblock {\em arXiv preprint arXiv:2007.02891}, 2020.

\bibitem{DKK20}
C.~Dowden, M.~Kang, and M.~Krivelevich.
\newblock The genus of the {E}rd{\H{o}}s-{R}\'{e}nyi random graph and the
  fragile genus property.
\newblock {\em Random Structures {\&} Algorithms}, 56(1):97--121, 2020.

\bibitem{EJ18}
S.~Ehard and F.~Joos.
\newblock Paths and cycles in random subgraphs of graphs with large minimum
  degree.
\newblock {\em Electron. J. Combin.}, 25(2):Paper No. 2.31, 15, 2018.

\bibitem{EKK20}
J.~Erde, M.~Kang, and M.~Krivelevich.
\newblock Large complete minors in random subgraphs.
\newblock {\em Combin. Probab. Comput.}, 2020.
\newblock https://doi.org/10.1017/S0963548320000607.

\bibitem{ES79}
P.~Erd\H{o}s and J.~Spencer.
\newblock Evolution of the {$n$}-cube.
\newblock {\em Comput. Math. Appl.}, 5(1):33--39, 1979.

\bibitem{FKO09}
N.~Fountoulakis, D.~K\"{u}hn, and D.~Osthus.
\newblock Minors in random regular graphs.
\newblock {\em Random Structures {\&} Algorithms}, 35(4):444--463, 2009.

\bibitem{FK16}
A.~Frieze and M.~Karo\'{n}ski.
\newblock {\em Introduction to random graphs}.
\newblock Cambridge University Press, Cambridge, 2016.

\bibitem{FK13}
A.~Frieze and M.~Krivelevich.
\newblock On the non-planarity of a random subgraph.
\newblock {\em Combin. Probab. Comput.}, 22(5):722--732, 2013.

\bibitem{G59}
E.~N. Gilbert.
\newblock Random graphs.
\newblock {\em Ann. Math. Statist.}, 30:1141--1144, 1959.

\bibitem{GNS17}
R.~Glebov, H.~Naves, and B.~Sudakov.
\newblock The threshold probability for long cycles.
\newblock {\em Combin. Probab. Comput.}, 26(2):208--247, 2017.

\bibitem{H66}
L.~H. Harper.
\newblock Optimal numberings and isoperimetric problems on graphs.
\newblock {\em J. Combinatorial Theory}, 1:385--393, 1966.

\bibitem{H60}
T.~E. Harris.
\newblock A lower bound for the critical probability in a certain percolation
  process.
\newblock {\em Proc. Cambridge Philos. Soc.}, 56:13--20, 1960.

\bibitem{HN14}
R.~van~der Hofstad and A.~Nachmias.
\newblock Unlacing hypercube percolation: a survey.
\newblock {\em Metrika}, 77(1):23--50, 2014.

\bibitem{HN17}
R.~van~der Hofstad and A.~Nachmias.
\newblock Hypercube percolation.
\newblock {\em J. Eur. Math. Soc. (JEMS)}, 19(3):725--814, 2017.

\bibitem{HLW06}
S.~Hoory, N.~Linial, and A.~Wigderson.
\newblock Expander graphs and their applications.
\newblock {\em Bull. Amer. Math. Soc. (N.S.)}, 43(4):439--561, 2006.

\bibitem{HN20}
T.~Hulshof and A.~Nachmias.
\newblock Slightly subcritical hypercube percolation.
\newblock {\em Random Structures {\&} Algorithms}, 56(2):557--593, 2020.

\bibitem{JLR00}
S.~Janson, T.~{\L}uczak, and A.~Ruci\'{n}ski.
\newblock {\em Random graphs}.
\newblock Wiley-Interscience Series in Discrete Mathematics and Optimization.
  Wiley-Interscience, 2000.

\bibitem{KR10}
K.~Kawarabayashi and B.~Reed.
\newblock A separator theorem in minor-closed classes.
\newblock In {\em 2010 {IEEE} 51st {A}nnual {S}ymposium on {F}oundations of
  {C}omputer {S}cience---{FOCS} 2010}, pages 153--162. IEEE Computer Soc., Los
  Alamitos, CA, 2010.

\bibitem{K18}
M.~Krivelevich.
\newblock Finding and using expanders in locally sparse graphs.
\newblock {\em SIAM J. Discrete Math.}, 32(1):611--623, 2018.

\bibitem{K19}
M.~Krivelevich.
\newblock Expanders -- how to find them, and what to find in them.
\newblock In {\em Surveys in combinatorics 2019}, volume 456 of {\em London
  Math. Soc. Lecture Note Ser.}, pages 115--142. Cambridge Univ. Press,
  Cambridge, 2019.

\bibitem{K19a}
M.~Krivelevich.
\newblock Long cycles in locally expanding graphs, with applications.
\newblock {\em Combinatorica}, 39(1):135--151, 2019.

\bibitem{KLS15}
M.~Krivelevich, C.~Lee, and B.~Sudakov.
\newblock Long paths and cycles in random subgraphs of graphs with large
  minimum degree.
\newblock {\em Random Structures {\&} Algorithms}, 46(2):320--345, 2015.

\bibitem{KN06}
M.~Krivelevich and A.~Nachmias.
\newblock Coloring complete bipartite graphs from random lists.
\newblock {\em Random Structures {\&} Algorithms}, 29(4):436--449, 2006.

\bibitem{KS14}
M.~Krivelevich and W.~Samotij.
\newblock Long paths and cycles in random subgraphs of {$\mathcal{H}$}-free
  graphs.
\newblock {\em Electron. J. Combin.}, 21(1):Paper 1.30, 18, 2014.

\bibitem{KS13}
M.~Krivelevich and B.~Sudakov.
\newblock The phase transition in random graphs: a simple proof.
\newblock {\em Random Structures {\&} Algorithms}, 43(2):131--138, 2013.

\bibitem{MS18}
G.~Moshkovitz and A.~Shapira.
\newblock Decomposing a graph into expanding subgraphs.
\newblock {\em Random Structures {\&} Algorithms}, 52(1):158--178, 2018.

\bibitem{R09}
C.~M. Reidys.
\newblock Large components in random induced subgraphs of {$n$}-cubes.
\newblock {\em Discrete Math.}, 309(10):3113--3124, 2009.

\bibitem{R55}
G.~Ringel.
\newblock \"{U}ber drei kombinatorische {P}robleme am {$n$}-dimensionalen
  {W}\"{u}rfel und {W}\"{u}rfelgitter.
\newblock {\em Abh. Math. Sem. Univ. Hamburg}, 20:10--19, 1955.

\bibitem{R14}
O.~Riordan.
\newblock Long cycles in random subgraphs of graphs with large minimum degree.
\newblock {\em Random Structures {\&} Algorithms}, 45(4):764--767, 2014.

\bibitem{S67}
A.~A. Sapo\v{z}enko.
\newblock Metric properties of almost all functions of the algebra of logic.
\newblock {\em Diskret. Analiz}, (10):91--119, 1967.

\end{thebibliography}
	
\appendix
\section{Proof of Theorem \ref{t:AKSfine}}\label{a:AKS}
We note that the proof below follows the proof in \cite{AKS81} quite closely. There is a slight error in the original proof, where the vertex-isoperimetric inequality of Harper \cite{H66} is used to assert the existence of a large matching between a subset of the hypercube and its complement, however, the claimed size of this matching is too large by a factor of $d$. This error is easily fixed, and we do so by utilising instead the stronger isoperimetric result given in Theorem \ref{t:vtxiso}.

\begin{proof}[Proof of Theorem \ref{t:AKSfine}]
During the proof we will introduce a series of constants $c_1,c_2,\ldots,$ with the understanding that each $c_i$ will be as small as is necessary for the argument in terms of $\delta_1,\delta_2$ and all $c_j$ with $j<i$. Recall that $p=\frac{1+\delta_1}{d}$ and $\gamma_1$ denotes the survival probability of the Po$(1+ \delta_1)$ branching process.

Let us first show that whp $|V(L_1)| \leq (\gamma_1 + \delta_2)n$. To begin with, we note that the number of vertices in the component of $Q^d_p$ containing a fixed vertex $v$ of the hypercube is dominated by a Bin$(d,p)$ branching process, which tends in distribution to a Po$(1+\delta_1)$ branching process as $d \to \infty$.

\begin{claim}\label{c:upperboundgiant}
Whp the number of vertices contained in component of order at most $d$ in $Q^d_p$ is at least $(1-\gamma_1 - c_1)n$.
\end{claim}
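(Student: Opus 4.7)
My plan is a standard first moment plus concentration argument. Let $X_v := \mathbf{1}[|C(v)| > d]$, where $C(v)$ denotes the component of $v$ in $Q^d_p$, and set $N := \sum_v X_v$, so that $n - N$ counts the vertices in components of order at most $d$. It suffices to show that whp $N \leq (\gamma_1 + c_1) n$.

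I would first bound $\mathbb{E}[N]$. Revealing $C(v)$ via BFS and noting that at each step the exposed vertex contributes at most $d$ new potential edges to the exploration, $|C(v)|$ is stochastically dominated by the total progeny $T_d$ of a Galton--Watson branching process with $\text{Bin}(d,p)$ offspring distribution. Since $p = (1+\delta_1)/d$, a standard coupling between binomial and Poisson branching processes yields $\mathbb{P}[T_d > d] \to \gamma_1$ as $d \to \infty$: choose $k$ large enough that the total progeny $T_\infty$ of a $\text{Po}(1+\delta_1)$ branching process satisfies $\mathbb{P}[T_\infty > k] \leq \gamma_1 + c_1/4$; the event $\{T_d > k\}$ depends on only finitely many generations, so $\mathbb{P}[T_d > k] \to \mathbb{P}[T_\infty > k]$, and hence for $d \geq k$ large enough $\mathbb{P}[T_d > d] \leq \mathbb{P}[T_d > k] \leq \gamma_1 + c_1/2$. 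By symmetry of $Q^d$, $\mathbb{E}[N] = n \cdot \mathbb{P}[|C(v_0)| > d] \leq (\gamma_1 + c_1/2) n$.

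For concentration I would apply McDiarmid's bounded-differences inequality (equivalently, an edge-exposure Azuma--Hoeffding argument). A short case analysis shows that flipping the status of a single edge $e = uv$ changes $N$ by at most $2d$: such a flip merges or splits at most two components, and the vertices that cross the threshold of being in a component of order greater than $d$ contribute at most $2d$ to the change in $N$. Applied to $N$ as a function of the $nd/2$ independent edge indicators of $Q^d_p$, this yields
\[
\mathbb{P}\bigl[N \geq \mathbb{E}[N] + c_1 n/2\bigr] \leq \exp\!\left(-\Omega\!\left(\tfrac{n}{d^3}\right)\right) = o(1),
\]
which combined with the expectation bound gives $N \leq (\gamma_1 + c_1) n$ whp, as required. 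The main subtleties are the branching-process coupling step (relating the truncated progeny probability $\mathbb{P}[T_d > d]$ to $\gamma_1$) and verifying the Lipschitz estimate for $N$; everything else is routine.
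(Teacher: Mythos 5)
Your proposal is correct and follows essentially the same route as the paper: stochastically dominate $|C(v)|$ by a $\text{Bin}(d,p)$ branching process that converges to the $\text{Po}(1+\delta_1)$ process, bound the expectation, and apply an edge-exposure Azuma--Hoeffding/McDiarmid argument with Lipschitz constant $2d$. The only cosmetic differences are that you work with $N = n - Z$ rather than $Z$ directly, and you spell out the branching-process coupling that the paper delegates to a citation.
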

\begin{proof}[Proof of Claim \ref{c:upperboundgiant}]
It follows from standard bounds, see for example \cite{BR12}, that the probability that a Po$(1+\delta_1)$ branching process dies after growing to size $d$ is $o(1)$, and so the probability that $v$ is contained in a component of order at most $d$ is at least $1 - \gamma_1 - c_2$. If we let $Z$ be the number of vertices contained in components of order at most $d$ in $Q^d_p$, then it follows that $\mathbb{E}(Z) \geq (1-\gamma_1 - c_2)n$. However, $Z$ is a $2d$-Lipschitz function of the edges of $Q^d_p$, that is, by adding or deleting a single edge we can change $Z$ by at most $2d$, and so it follows from a standard application of the Azuma-Hoeffding inequality to the edge-exposure martingale on $Q^d_p$ (see for example \cite[Section 7]{AS}) that $Z$ is tightly concentrated about its mean, and in particular whp $Z \geq (1-\gamma_1 - c_1)n$. 
\end{proof}
It follows from Claim \ref{c:upperboundgiant} that whp 
\begin{equation}\label{e:upperboundgiant}
|V(L_1)| \leq n - Z \leq (\gamma_1 + c_1)n \leq (\gamma_1 + \delta_2)n.
\end{equation}

To continue the argument, we consider a multi-round exposure with $q_1 < p$ chosen such that if $\hat{\gamma}$ is the survival probability of the Po$(dq_1)$ branching process then $|\gamma_1 - \hat{\gamma}| \leq c_3$. Note that we then may assume that we can choose $q_2, q_3 \geq \frac{c_4}{d}$ such that 
\[(1-q_1)(1-q_2)(1-q_3) = 1-p.
\] Let us write $Q_1 = Q^d_{q_1}$, $Q_2 = Q_1 \cup Q^d_{q_2}$ and $Q_3 = Q_2 \cup Q^d_{q_3}$, where the random subgraphs are chosen independently, so that $Q_3$ has the same distribution as $Q^d_p$.

By a similar comparison to a branching process as above, we see that the probability that a fixed vertex $v$ is contained in a component of order at least $c_7 d$ in $Q_1$ is at least $\hat{\gamma} - c_6$. In particular, we expect around $\hat{\gamma} d$ of the neighbours of any fixed vertex to be contained in components of $Q_1$ of order $\Omega( d)$. Over the next few claims, we will show that we can `boost' this result to show that almost every vertex has around $\hat{\gamma} d$ neighbours contained in components of $Q_2$ of order $\Omega(d^2)$.

\begin{claim}\label{c:claim1}
Whp all but $n^{1-c_8}$ of the vertices $v$ in $Q^d$ have at least $(\hat{\gamma} - c_5)d$ many neighbours in $Q^d$ which are contained in a component of order at least $c_7 d$ in $Q_1$. Let us say in this case that $v$ has property $(a)$.
\end{claim}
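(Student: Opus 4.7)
The overall strategy reduces, via Markov's inequality, to a pointwise bound: for every fixed $v \in V(Q^d)$,
\[
\mathbb{P}\bigl(v \text{ fails property } (a)\bigr) \leq n^{-2c_8}
\]
for some small constant $c_8 > 0$. Summing over $v$ gives expected number of bad vertices at most $n^{1-2c_8}$, whence another application of Markov yields at most $n^{1-c_8}$ bad vertices whp.

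To establish the pointwise bound I would fix $v$ with neighbours $u_1, \ldots, u_d$ and expose $Q_1$ through $d$ successive truncated BFS explorations, one per $u_i$. For $i = 1, \ldots, d$, run BFS in $Q_1$ from $u_i$, restricted to $V(Q^d) \setminus (\{v\} \cup F_{i-1})$, where $F_{i-1}$ is the set of vertices revealed in rounds $1, \ldots, i-1$, and halt as soon as $c_7 d$ vertices are discovered (setting $Y_i = 1$) or the restricted component is exhausted (setting $Y_i = 0$). Whenever $Y_i = 1$, the $Q_1$-component of $u_i$ has size at least $c_7 d$, so the number of good neighbours of $v$ is at least $\sum_i Y_i$.

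The crux is to show that, no matter the random history, $\mathbb{P}(Y_i = 1 \mid F_{i-1}) \geq \hat\gamma - c_6$ for some $c_6 < c_5$. Since $|F_{i-1}| \leq (i-1) c_7 d \leq c_7 d^2$, a double-counting argument bounds the number of vertices of $Q^d$ with any neighbour in $F_{i-1}$ by $d |F_{i-1}| \leq c_7 d^3 = o(n)$. A first-moment calculation then shows that with probability $1 - o(1)$ the restricted exploration coincides with the unrestricted BFS from $u_i$ in $Q_1$, which by the standard coupling with a $\mathrm{Bin}(d, q_1)$ Galton--Watson process survives to size $c_7 d$ with probability at least $\hat\gamma - c_6/2$; hence $\mathbb{P}(Y_i = 1 \mid F_{i-1}) \geq \hat\gamma - c_6$.

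With this conditional lower bound, $\sum_{i=1}^d Y_i$ stochastically dominates a $\mathrm{Bin}(d, \hat\gamma - c_6)$ random variable, and a Chernoff bound (Lemma \ref{l:Chernoff} \eqref{i:chernoff1}) gives
\[
\mathbb{P}\Bigl(\sum_i Y_i < (\hat\gamma - c_5) d\Bigr) \leq \exp\bigl(-\Omega(d)\bigr) = n^{-\Omega(1)},
\]
where the implicit exponent depends on $c_5 - c_6$ and $\hat\gamma$. Choosing $c_8$ sufficiently small yields the desired $n^{-2c_8}$ bound. I expect the main obstacle to be the rigorous justification of $\mathbb{P}(Y_i = 1 \mid F_{i-1}) \geq \hat\gamma - c_6$: one must show that, despite the BFS vertices being highly correlated through $Q_1$, the probability that any of them falls into the $o(n)$-sized neighbourhood of $F_{i-1}$ remains small, so the branching-process approximation is preserved uniformly across all $d$ rounds.
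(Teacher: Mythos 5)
Your overall strategy---establish a pointwise bound $\mathbb{P}(v \text{ fails property } (a)) \leq n^{-2c_8}$ and then apply Markov's inequality---matches the paper's, but your mechanism for the pointwise bound is genuinely different. The paper observes that each event $A_i = \{u_i \text{ lies in a component of order at least } c_7 d \text{ in } Q_1\}$ is an increasing property of $Q_1 \subseteq Q^d$, and invokes Harris's inequality (Lemma \ref{l:Harris}) to obtain $\mathbb{P}\bigl(\bigcap_{i\in W} A_i\bigr) \geq (\hat{\gamma} - c_6)^{|W|}$ for every subset $W$ of the $d$ neighbours; from this it concludes that $\sum_i \mathbf{1}_{A_i}$ dominates a $\mathrm{Bin}(d, \hat{\gamma} - c_6)$ variable and finishes with the Chernoff bound. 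You replace this correlation-inequality step with a sequential truncated BFS exposure and a conditional lower bound.

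The step you yourself flag as the main obstacle is a genuine gap in the sketch. You argue that, because $|F_{i-1}| = O(d^2) = o(n)$, the restricted BFS from $u_i$ coincides whp with the unrestricted one, giving $\mathbb{P}(Y_i = 1 \mid F_{i-1}) \geq \hat{\gamma} - c_6$ uniformly over histories. But $F_{i-1}$ is not a uniformly placed small set: it is the union of BFS trees rooted at $u_1, \ldots, u_{i-1}$, all of which lie at distance exactly $2$ from $u_i$ in $Q^d$ and from one another. These explorations are biased towards the local neighbourhood of $v$, and a crude first-moment count over a size-$O(d^2)$ set does not rule out the history in which $F_{i-1}$ has absorbed many of $u_i$'s hypercube-neighbours, or even $u_i$ itself; in the latter case your restricted BFS is vacuous and $Y_i = 0$ deterministically, so the claimed uniform conditional bound fails outright. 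To repair this you would either have to handle such degenerate histories separately (for example, noting that if $u_i$ is swallowed by an earlier exploration that was halted at size $c_7 d$, then $u_i$'s component already has order at least $c_7 d$ and $u_i$ is automatically a good neighbour), or abandon the sequential conditioning and exploit monotonicity directly, as the paper's Harris argument does.
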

\begin{proof}[Proof of Claim \ref{c:claim1}]
Indeed, each neighbour of $v$ in $Q^d$ has probability at least $\hat{\gamma} - c_6$ of being contained in a component of order at least $c_7 d$ in $Q_1$. Furthermore, since this property is an increasing property of subgraphs of $Q^d$, by the inequality of Harris (Lemma \ref{l:Harris}), given any subset $W$ of the neighbours of $v$ of size $1 \leq i \leq d$ the probability that each $w \in W$ is contained in a component of order at least $c_7 d$ in $Q_1$ is at least $(\hat{\gamma} - c_6)^i$. It follows that the number of neighbours of $v$ which are contained in a component of order at least $c_7 d$ in $Q_1$ stochastically dominates a Bin$(d,\hat{\gamma} - c_6)$ random variable. Hence, by Chernoff type bounds (Lemma \ref{l:Chernoff} \eqref{i:chernoff1}) the probability that $v$ does not have at least $d(\hat{\gamma} - c_5)$ many neighbours in $Q^d$ which are contained in a component of order at least $c_7 d$ in $Q_1$ is at most $n^{-2 c_8}$, and so $v$ does not have property $(a)$ with probability at most $n^{-2 c_8}$. Hence by Markov's inequality whp the total number of vertices without property $(a)$ is at most $n^{1-c_8}$.
\end{proof}

\begin{claim}\label{c:claim2}
Whp for all but $n^{1-c_{11}}$ of the vertices $v$ in $Q^d$ we can find a collection of at least $c_{10} d$ many disjoint connected subgraphs in $Q_1$ each of order at least $c_{12} d$ which are adjacent to $v$ in $Q^d$. Let us say in this case that $v$ has property $(b)$.
\end{claim}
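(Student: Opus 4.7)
The strategy is to upgrade property $(a)$ to property $(b)$ by performing a localised BFS from each of $v$'s relevant neighbors in $Q_1$ and greedily extracting a pairwise disjoint subcollection. Fix a vertex $v$ with property $(a)$ and let $N^*(v) \subseteq N(v)$ be the set of at least $(\hat{\gamma} - c_5)d$ neighbors of $v$ that lie in components of $Q_1$ of order $\geq c_7 d$. For each $u \in N^*(v)$, let $T_u$ be the connected subgraph consisting of the first $c_{12} d$ vertices discovered by BFS from $u$ in $Q_1$, which is well-defined provided $c_{12} \leq c_7$.

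I would then introduce the intersection graph $H(v)$ on vertex set $N^*(v)$, with an edge $u u'$ whenever $T_u \cap T_{u'} \neq \emptyset$. A standard greedy argument extracts an independent set in $H(v)$ of size at least $|N^*(v)|/(\Delta(H(v)) + 1)$, which corresponds to a pairwise disjoint family of connected $Q_1$-subgraphs $T_u$, each of order $c_{12} d$ and each containing a distinct $Q^d$-neighbor of $v$. Consequently, it suffices to show that whp all but $n^{1-c_{11}}$ vertices $v$ satisfy $\Delta(H(v)) \leq C$ for some absolute constant $C$ depending only on $\delta_1$, $c_5$, and $c_{10}$: then choosing $c_{10} \leq (\hat{\gamma}-c_5)/(C+1)$ produces the desired collection and property $(b)$ follows.

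The technical heart, and principal obstacle, is bounding $\Delta(H(v))$. For $T_u$ and $T_{u'}$ to intersect, there must be a path in $Q_1$ of length at most $2\,\mathrm{depth}(T_u)$ connecting $u$ to $u'$. Comparing BFS growth to a branching process shows that since each generation grows by a factor of $dq_1 = 1 + \delta_1$, the depth of $T_u$ is $O(\log d)$ whp, so the relevant paths have length $\ell = O(\log d)$. The expected number of such paths in $Q_1$ from a fixed $u$ to another $u' \in N(v)$ is bounded by $\sum_{t \leq 2\ell} W_t(u,u') q_1^t$, where $W_t(u,u')$ counts walks of length $t$ in $Q^d$; using the local structure of the hypercube (in particular that the codegree of any pair of vertices is at most two) and the sparsity $q_1 = \Theta(1/d)$, one can show that for a fixed $u \in N(v)$ the expected number of $u' \in N(v)$ with a connecting path of length $\leq 2\ell$ in $Q_1$ is bounded by an absolute constant.

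Finally, I would use Harris's inequality (Lemma \ref{l:Harris}) to handle the positive correlation between the "large BFS-tree from $u$" events, and then apply Markov's inequality to the total count, over $v$, of pairs $(u, u') \in N(v)^2$ that violate the degree bound. Combined with the Azuma--Hoeffding inequality to control the concentration of this count (since a single edge flip changes it by $O(d^{O(1)})$), this shows that all but $n^{1-c_{11}}$ vertices $v$ have $\Delta(H(v)) \leq C$, completing the proof. The delicate point throughout is balancing the polynomial growth $d^t$ of walks in $Q^d$ against the sparsity factor $q_1^t = \Theta(d^{-t})$, so the counting must be done carefully to absorb the polylogarithmic depth of the BFS-trees into an absolute constant bound on degrees.
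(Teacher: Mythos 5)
The paper takes an essentially opposite route: rather than starting from property $(a)$ and proving that BFS trees from $v$'s relevant neighbours are mostly disjoint, it \emph{forces} disjointness and independence by construction. Specifically, it fixes $c_9 d$ pairwise-disjoint subcubes $Q(i)$ of dimension $(1-c_9)d$ (one per coordinate $i \leq c_9 d$, varying the last $(1-c_9)d$ coordinates), each containing a neighbour $w_i$ of $v$. Since $c_9 \ll \delta_1$, each restriction $Q(i)_{q_1}$ is itself supercritical, so the component of $w_i$ in $Q(i)_{q_1}$ has order at least $c_{12}d$ with probability at least $\hat\gamma/2$. Disjointness of the subcubes makes these $c_9d$ events independent, and Chernoff plus Markov finish the claim.

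Your approach has two genuine gaps that the paper's construction is specifically designed to sidestep. First, a counting obstruction: property $(a)$ only guarantees that $v$'s relevant neighbours lie in components of order at least $c_7 d$; many of these neighbours may well lie in a \emph{single} such component $C$ with $|C| = \Theta(d)$. In that case at most $|C|/(c_{12}d) = O(1)$ of your BFS trees can be pairwise disjoint, so no independent set in $H(v)$ of size $\Omega(d)$ exists, and the $\Delta(H(v)) \le C$ reduction cannot hold. You would need $|C| \ge c_{10}c_{12}d^2$ for the packing to even be possible, but that is precisely the quadratic-order statement the claim is en route to proving. Second, the concentration step is not viable as stated. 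You need $\mathbb{P}\bigl(\Delta(H(v)) > C\bigr) \le n^{-\Omega(1)}$ to feed into Markov over the $n$ vertices; but even granting $\mathbb{E}\bigl[\Delta(H(v))\bigr] = O(1)$, Markov on the pair count yields only a constant probability bound, and Azuma--Hoeffding with a Lipschitz constant of $d^{O(1)}$ over $|E(Q^d)| = nd/2$ exposures forces deviations of order $d^{O(1)}\sqrt{n}$ before the tail beats $n^{-\Omega(1)}$ --- astronomically larger than the $O(d)$ expectation. The paper avoids both problems at once: the subcubes are disjoint by fiat, so the branching processes in them are genuinely independent, and standard binomial Chernoff bounds already give the $n^{-2c_{11}}$ per-vertex failure probability.
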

\begin{proof}[Proof of Claim \ref{c:claim2}]
Indeed, consider without loss of generality the case where $v$ is the origin. We can fix $c_9 d$ disjoint subcubes of dimension $(1-c_9)d$, each of which contains a vertex adjacent to $v$, by taking for each $1 \leq i \leq c_9 d$ the vertex $w_i$ with a one in the $i$th coordinate and zeros elsewhere, and taking the subcube $Q(i)$ containing $w_i$, where we vary the last $(1 - c_9)d$ coordinates. Since $c_9 \ll \delta_1$, again by comparison to an appropriate branching process, for each $w_i$ with probability at least $\frac{\hat{\gamma}}{2}$ the component of $Q(i)_{q_1}$ containing $w_i$ has order at least $c_{12} d$. Since the subcubes $Q(i)$ are disjoint, the probability that less than $c_{10} d \leq \frac{\hat{\gamma}c_9 d}{4}$ of these components have order at least $c_{12} d$ is at most $n^{-2c_{11}}$ by Lemma \ref{l:Chernoff}. Therefore, $v$ does not have property $(b)$ with probability at most $n^{-2c_{11}}$. Hence by Markov's inequality whp the total number of vertices without property $(b)$ is at most $n^{1-c_{11}}$.
\end{proof}

\begin{claim}\label{c:claim3}
Whp all but $ n^{1-c_{13}}$ of the vertices $v$ in $Q^d$ have $(\hat{\gamma}-c_5)d$ many neighbours in $Q^d$, each of which is contained in some subgraph in $Q_1$ of order at least $c_{12} d$, where each vertex in this subgraph has property $(b)$. Let us say in this case that $v$ has property $(c)$.

\end{claim}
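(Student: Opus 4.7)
The aim is to strengthen property $(a)$ to property $(c)$ by exploiting that by Claim \ref{c:claim2} whp the set $B$ of vertices without property $(b)$ satisfies $|B| \leq n^{1-c_{11}}$, which is small. I would choose $c_{12} \leq c_7$, which is permitted by the convention on the constants, so that a neighbour $u$ of $v$ whose component in $Q_1$ has order at least $c_7 d$ is contained in a subgraph of $Q_1$ of order at least $c_{12} d$ whose every vertex has property $(b)$ if and only if $u \notin B$ and the component of $u$ in $Q_1 \setminus B$ has order at least $c_{12}d$.

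Accordingly, I would define the set of \emph{shattered} vertices $S$ to consist of those $u \in V(Q^d)$ whose component in $Q_1$ has order at least $c_7 d$ but whose component in $Q_1 \setminus B$ has order less than $c_{12}d$, and the key step would be to bound $|S|$. Any shattered component $K$ of $Q_1 \setminus B$ is a proper subset of some larger $Q_1$-component, so there is at least one $Q_1$-edge from $K$ to its complement; the other endpoint of such an edge cannot lie in $V(Q^d) \setminus B$, since then it would itself lie in $K$ by maximality of $K$ as a component of $Q_1 \setminus B$, so it lies in $B$. Distinct shattered components contribute distinct such edges, so the number of shattered components is at most the number of $Q_1$-edges incident to $B$, which is bounded by $d|B|$. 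Since each shattered component has fewer than $c_{12}d$ vertices, this gives $|S| \leq c_{12}d^2 |B| \leq n^{1-c_{11}/2}$ for $d$ sufficiently large.

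Finally, I would apply a slight sharpening of Claim \ref{c:claim1}, with $c_5/2$ in place of $c_5$, to obtain an exceptional set of size at most $n^{1-c_8'}$ outside which every $v$ has at least $(\hat{\gamma}-c_5/2)d$ neighbours in components of $Q_1$ of order at least $c_7 d$. For such $v$, property $(c)$ with constant $c_5$ follows provided at most $c_5 d/2$ of these neighbours lie in $B \cup S$, because each remaining neighbour is not shattered and not in $B$, and hence its component in $Q_1 \setminus B$ is a subgraph of $Q_1$ of order at least $c_{12}d$ all of whose vertices have property $(b)$. By double-counting, $\sum_v |N(v) \cap (B \cup S)| = d|B \cup S|$, so at most $O(n^{1-c_{11}/2})$ vertices have more than $c_5 d/2$ neighbours in $B \cup S$. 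Summing the two exceptional contributions yields a bound of $n^{1-c_{13}}$ on the number of vertices without property $(c)$, for a suitable choice of $c_{13}$. The main obstacle is the bound on $|S|$, whose argument hinges on the injection from shattered components into the $Q_1$-edges incident to $B$.
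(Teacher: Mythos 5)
Your argument is correct, and it follows a genuinely different route from the paper's. The paper's proof is geometric: it observes that if $v$ has property $(a)$ and every vertex within $Q^d$-distance $c_{12}d$ of $v$ has property $(b)$, then for each neighbour $u$ of $v$ in a $Q_1$-component of order at least $c_7 d$, the ball of radius $c_{12}d-1$ around $u$ inside that component is a connected subgraph of $Q_1$ of order at least $c_{12}d$ all of whose vertices have property $(b)$, so $v$ has property $(c)$; the number of exceptional $v$ is then bounded by $|B|$ times a Hamming-ball volume $\sum_{i \le c_{12}d}\binom{d}{i}$, which is why the paper needs $c_{12}\ll c_{11}$. Your approach replaces this with an edge-counting argument: the injection from shattered components into $Q_1$-edges incident to $B$ gives $|S|=O(d^2|B|)$, a merely polynomial blow-up of $|B|$, and then a double count over neighbours finishes. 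Your route is a bit longer but more elementary (no ball-volume estimate) and is less demanding on the constants: it only needs $c_{12}\le c_7$ rather than $c_{12}\ll c_{11}$. The one adjustment your version requires, which you correctly flag, is the sharpened Claim \ref{c:claim1} with $c_5/2$ in place of $c_5$, so that after discarding the at most $c_5 d/2$ neighbours landing in $B\cup S$ you still retain $(\hat\gamma - c_5)d$ good ones; the paper avoids this because its far-from-$B$ vertices keep all $(\hat\gamma-c_5)d$ neighbours from property $(a)$ intact. Both yield the claim; yours is a perfectly valid alternative.
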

\begin{proof}[Proof of Claim \ref{c:claim3}]
By Claim \ref{c:claim1} whp at most $n^{1-c_8}$ vertices of $Q^d$ do not have property $(a)$. Furthermore, by Claim \ref{c:claim2} whp at most $n^{1-c_{11}}$ of the vertices of $Q^d$ do not have property $(b)$, and so whp the number of vertices at distance at most $c_{12} d$ from a vertex without property $(b)$ is at most
\[
n^{1-c_{11}} \sum_{i \leq c_{12} d} \binom{d}{i} \leq c_{12} d n^{1-c_{11}} \left(\frac{e}{c_{12}}\right)^{c_{12} d} \leq n^{1-2c_{13}}
\]
since $c_{12} \ll c_{11}$. However, clearly any vertex without property $(c)$ must either not have property $(a)$, or be within distance $c_{12} d$ from a vertex without property $(b)$. Hence, whp the number of vertices without property $(c)$ is at most $n^{1-2c_{13}} + n^{1-c_8} \leq n^{1-c_{13}}$.
\end{proof}

We now use our first sprinkling step to merge most of these disjoint connected subgraphs into ones of quadratic order. 

\begin{claim}\label{c:claim4}
Whp in $Q_2=Q_1 \cup Q^d_{q_2}$ all but $n^{1-c_{16}}$ of the vertices $v$ in $Q^d$ are adjacent in $Q^d$ to at least $(\hat{\gamma} - c_5) d$ many vertices lying in components of order at least $c_{15} d^2$ in $Q_2$. Let us say in this case that $v$ has property $(d)$.
\end{claim}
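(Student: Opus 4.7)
The plan is to use a sprinkling argument. I would fix a vertex $v$ and condition on $Q_1$ being such that $v$ satisfies property $(c)$ from Claim~\ref{c:claim3}, so that each of the $(\hat{\gamma}-c_5)d$ good $Q^d$-neighbours $u$ of $v$ lies in a connected subgraph $S_u\subseteq Q_1$ of order at least $c_{12}d$ in which every vertex has property $(b)$. It suffices to show that for each fixed such $u$, the $Q_2$-component $C(u)$ of $u$ has order at least $c_{15}d^2$ with probability at least $1-\exp(-\Omega(d))$: a union bound over the at most $d$ good neighbours of $v$ then gives property $(d)$ for $v$ with the same quality, and combining with Claim~\ref{c:claim3} and Markov's inequality bounds the number of vertices failing property $(d)$ by $n^{1-c_{16}}$ whp.

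For the per-$u$ analysis I would first harvest many witness edges. For each pair $(w,j)$ with $w\in S_u$ and $1\leq j\leq c_{10}d$, property $(b)$ gives a piece $P_w^{(j)}$ of order at least $c_{12}d$ in $Q_1$ adjacent to $w$ via some $Q^d$-edge $e_{w,j}$. For fixed $w$ the $e_{w,j}$ are distinct since the pieces are disjoint, and each edge of $Q^d$ can be designated at most twice overall (once per endpoint in $S_u$), so the collection contains at least $c_{10}c_{12}d^2/2$ distinct edges. Each is independently present in $Q^d_{q_2}$ with probability $q_2\geq c_4/d$, so Lemma~\ref{l:Chernoff}\eqref{i:chernoff1} produces $\Omega(d)$ surviving edges with probability $1-\exp(-\Omega(d))$. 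Any surviving $e_{w,j}$ places the entire connected piece $P_w^{(j)}$ into $C(u)$, since $P_w^{(j)}$ is connected in $Q_1\subseteq Q_2$.

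To turn $\Omega(d)$ surviving edges into an $\Omega(d^2)$ bound on $|C(u)|$, I would show that the hit pieces contribute $\Omega(d)$ pairwise vertex-disjoint subgraphs, each of size at least $c_{12}d$. Pigeonhole on the $c_{10}c_{12}d^2$ pairs $(w,j)$ gives at least $c_{10}d$ distinct pieces as subgraphs (since any piece $P$ occurs as $P_w^{(j)}$ for at most $|S_u|\leq c_{12}d$ pairs, as for each fixed $w$ the pieces are disjoint), and exploiting the explicit disjoint-subcube construction behind the proof of Claim~\ref{c:claim2} one can refine this to select $\Omega(d)$ of the pieces that are pairwise vertex-disjoint. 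On such a disjoint sub-family the witness-edge sets $E_P=\{e\in E(Q^d):e\text{ joins } S_u \text{ to } P\}$ are themselves disjoint, so the events ``$P$ is hit in $Q_2$'' are mutually independent; since $\sum_P |E_P|\cdot q_2=\Omega(d)$, a second Chernoff bound shows that whp $\Omega(d)$ disjoint pieces are hit, yielding $|C(u)|\geq \Omega(d)\cdot c_{12}d=\Omega(d^2)$.

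The main obstacle is this final disjointness step: while the pieces $P_w^{(\cdot)}$ for fixed $w$ are disjoint by construction, pieces associated to different vertices of $S_u$ may overlap as vertex sets, and a naive summation over surviving edges could severely overcount $|C(u)|$. Carefully exploiting the subcube structure behind Claim~\ref{c:claim2} (and, where needed, the vertex-isoperimetric bound of Theorem~\ref{t:vtxiso}) to extract a large pairwise vertex-disjoint sub-family of pieces is the technically delicate step, while the remainder is a standard sprinkling-plus-union-bound argument.
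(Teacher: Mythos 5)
You correctly reduce the claim to a per-neighbour statement (for each fixed good subgraph $S_u$ obtained from property $(c)$, show that its $Q_2$-component reaches order $\Omega(d^2)$ with probability $1-\exp(-\Omega(d))$, then union-bound and apply Markov), and you correctly identify the central obstacle: the pieces $P_w^{(j)}$ given by property $(b)$ are disjoint only for a \emph{fixed} $w$, and across different $w\in S_u$ they may heavily overlap, so counting surviving witness edges does not directly control $|C(u)|$.

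However, the proposed fix for this obstacle is a genuine gap. Exploiting ``the subcube structure behind Claim~\ref{c:claim2}'' does not obviously yield $\Omega(d)$ \emph{pairwise} disjoint pieces: the subcubes $Q(1),\dots,Q(c_9 d)$ are defined relative to a fixed vertex, and for two vertices $w,w'\in S_u$ differing only in one of the ``varying'' coordinates, the corresponding subcube families coincide, so the associated pieces can be identical or nearly so. Nothing in Theorem~\ref{t:vtxiso} addresses this either. Moreover, even granting a disjoint subfamily of $\Omega(d)$ pieces, the second Chernoff step needs those particular pieces to carry $\Omega(d^2)$ of the witness edges; a piece in the disjoint subfamily might have only a single witness edge, in which case it is hit only with probability $\Theta(1/d)$ and one expects $O(1)$ hits, not $\Omega(d)$. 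So the disjointness issue is not a ``technical delicacy'' but the crux, and the outline does not contain an idea that resolves it.

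The paper avoids the disjointness problem entirely by growing the component \emph{iteratively} rather than counting hits all at once. Starting from $A(0)=V(H)$, it picks vertices $v_1,v_2,\ldots$ of one fixed parity from $H$. At stage $i$, because $|A(i-1)|<c_{15}d^2$ and each piece in $T_{v_i}$ has order $\geq c_{12}d$, at most $c_{10}d/2$ of the $c_{10}d$ pieces can overlap $A(i-1)$ in more than $c_{12}d/2$ vertices, so there is a subfamily $T'_{v_i}$ of $c_{10}d/2$ pieces each contributing $\geq c_{12}d/2$ \emph{new} vertices if hit. Exposing the $c_{10}d/2$ edges from $v_i$ to these pieces gives a constant success probability per stage; the fixed-parity choice guarantees the exposed edge sets across stages are disjoint, hence the stages are independent, and a single Chernoff bound finishes. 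This online, size-conditioned selection is precisely what makes disjointness (and the witness-edge mass per piece) a non-issue, and it is the idea missing from your outline.
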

\begin{proof}[Proof of Claim \ref{c:claim4}]
After exposing $Q_1$, by Claim \ref{c:claim3} whp the number of vertices without property $(c)$ is at most $n^{1-c_{13}}$. For each vertex $v$ with property $(c)$, let us fix a family $S_v$ of $(\hat{\gamma} - c_5)d$ subgraphs of $Q_1$ of order $c_{12} d$ witnessing that $v$ has property $(c)$. Similarly for each vertex $v$ with property $(b)$, let us fix a family $T_v$ of $c_{10} d$ disjoint connected subgraphs of $Q_1$ of order $c_{12} d$ witnessing that $v$ has property $(b)$.

Consider a fixed subgraph $H \in S_v$ for some $v$ with vertex set $C = V(H)$, and let $C' \subseteq C$ be a set of vertices with the same parity of size $|C|/2$. Let $C' =\{v_1, v_2, \ldots  \}$. We will grow a vertex set $A$ containing $C$ which induces a connected subgraph of $Q_2$ inductively until it reaches an appropriate size. Let us set initially $A(0) = C$. 

If $|A(i-1)| \geq c_{15} d^2$ we let $A(i) = A(i-1)$. Otherwise, since $v$ has property $(c)$, each $v_i$ has property $(b)$, and so we can consider the family $T_{v_i}$ of disjoint subgraphs of $Q_1$ which are adjacent to $v_i$ in $Q^d$. Since $|A(i-1)| < c_{15} d^2$ we may assume that at most $\frac{c_{10}}{2}d$ of the subgraphs in $T_{v_i}$ meet $A(i-1)$ in more than $\frac{c_{12}}{2} d$ vertices. Let $T'_{v_i}$ be a subfamily of $\frac{c_{10}}{2}d$ subgraphs, each of which meets $A(i-1)$ in at most $\frac{c_{12}}{2} d$ vertices.

We expose the edges between $v_i$ and each subgraph in $T'_{v_i}$ in $Q^d_{p_2}$, and we let $A(i)$ be $A(i-1)$ together with the vertex sets of all the subgraphs in $T'_{v_i}$ which are connected to $A(i-1)$ in this manner. Let $A$ be the final subgraph built in this manner.

We say that stage $i$ is successful if $|A(i-1)| \geq c_{15} d^2$ or at least one of the exposed edges is present in $Q^d_{p_2}$. Note that the probability that a round is successful is at least
\[
1-\left(1 - \frac{c_4}{d}\right)^{\frac{c_{10}}{2}d} \geq c_{14}.
\]
Let $r$ be the number of rounds which are successful. Then $|A| \geq \min \{ r\frac{c_{12}}{2} d, c_{15} d^2 \}$.

Note that, since the parity of each $v_i \in C'$ is the same, and we only expose edges adjacent to $v_i$ in stage $i$, the outcome of each stage is independent. Hence, $r$ stochastically dominates a Bin$(|C'|,c_{14})$ random variable, whose expectation is $\frac{c_{12} c_{14}}{2} d$. Hence, it follows again from Lemma \ref{l:Chernoff} \eqref{i:chernoff1} that the probability that $|A| \leq c_{15} d^2$ is at most $n^{-4c_{16}}$.

Hence, we have shown that for any fixed subgraph $C \in S_v$, the probability that $C$ is not contained in a component of $Q_2$ of order at least $c_{15} d^2$ is at most $n^{-4c_{16}}$. It follows that the expected number of subgraphs $C$ such that $C$ is not contained in a component of $Q_2$ of order at least $c_{15} d^2$  and $C \in S_v$ for some $v$ is at most $(\hat{\gamma} - c_5)dn^{1-4c_{16}} \leq n^{1-3c_{16}}$. As before, it follows from Markov's inequality that whp the number of such subgraphs is at most $n^{1-2c_{16}}$.

In particular, whp the number of vertices with property $(c)$ but not $(d)$ is at most $n^{1-2dc_{16}}$, and so the number of vertices without property $(d)$ is at most $n^{1-c_{16}}$.
\end{proof}

In particular, since it is an increasing property and $Q_2 \subseteq Q_3$, it follows from Claim \ref{c:claim4} that property \eqref{i:badvtcs} of Theorem \ref{t:AKSfine} holds in $Q_3$, with $\delta_3 = c_{15} \ll \delta_2$, $\delta_4 =(\hat{\gamma} - c_5)$ and $\delta_5 = c_{16}$. Furthermore, since each vertex with property $(d)$ is adjacent to at least $(\hat{\gamma} - c_5)d$ many vertices contained in components of order at least $c_{15} d^2$, it follows by double counting that the number of vertices contained in components of order at least $c_{15} d^2$ in $Q_2$, and hence in $Q_3$, is at least
\begin{equation}\label{e:largevtcs}
\frac{(n-n^{1-c_{16}})(\hat{\gamma} - c_5)d}{d} \geq (\hat{\gamma} - 2c_5)n.
\end{equation}

Next, we show that whp almost all the vertices in large components in $Q_2$ belong to the largest component of $Q_3$. Hence, we will be able to deduce properties \eqref{i:largecomponent} and \eqref{i:largevtcs} of Theorem \ref{t:AKSfine} from \eqref{e:largevtcs}, together with \eqref{e:upperboundgiant}.

So, to that end, let us denote by $X$ the set of vertices without property $(d)$ and let $Y$ be the set of vertices contained in components of order at least $c_{15} d^2$ in $Q_2$. Note that, by definition, every vertex not in $X$ is adjacent to at least $(\hat{\gamma} - c_5)d$ many vertices in $Y$.

\begin{claim}\label{c:partition}
Whp for every partition of $Y = A \cup B$ with $\min \big\{ |A|,|B| \big\} \geq c_{17} n$ there is an edge between $A$ and $B$ in $Q_3 = Q_2 \cup Q^d_{q_3}$.
\end{claim}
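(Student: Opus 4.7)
The plan is a union bound over partitions, restricted to those respecting the component structure of $Q_2[Y]$. If a partition $Y = A \cup B$ places two vertices of the same component of $Q_2[Y]$ on different sides, then because each such component is connected in $Q_2$, an edge along the $Q_2$-path joining these two vertices already crosses from $A$ to $B$, giving an edge of $Q_2 \subseteq Q_3$ as required. So it suffices to consider partitions in which every component of $Q_2[Y]$ lies entirely in one part. Since each component has at least $c_{15}d^2$ vertices, there are at most $n/(c_{15}d^2)$ components, hence at most $2^{n/(c_{15}d^2)}$ admissible partitions.

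\textbf{Lower bound on cross-edges in $Q^d$.} For a fixed admissible partition with $\min\{|A|,|B|\} \geq c_{17}n$, the key is to show that $\left|E_{Q^d}(A, B)\right| = \Omega(n/d)$. I would combine property $(d)$ from Claim \ref{c:claim4} --- every vertex outside a set $X$ with $|X| \leq n^{1-c_{16}}$ has at least $(\hat\gamma - c_5)d$ neighbours in $Y$ --- with the vertex-isoperimetric inequality (Theorem \ref{t:vtxiso}) applied to $A$. The matching of size $\Omega(n/\sqrt{d})$ from $A$ to $V(Q^d) \setminus A$ provides edges into $B \cup C$, where $C = V(Q^d) \setminus Y$; property $(d)$ then forces vertices of $C$ to have many neighbours back in $Y$, which rules out the possibility that the matching endpoints are entirely absorbed by $C$ and delivers the desired lower bound $\left|E_{Q^d}(A, B)\right| \gg n/d$.

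\textbf{Sprinkling and union bound.} Since the admissible partition respects $Q_2[Y]$-components, no edge of $Q_2$ joins $A$ to $B$, so every edge of $E_{Q^d}(A, B)$ is exposed independently in the $Q^d_{q_3}$-round. Conditional on $Q_2$, the probability that none of these edges lies in $Q^d_{q_3}$ is at most
\[
(1-q_3)^{\left|E_{Q^d}(A, B)\right|} \leq \exp\!\left(-c_4 \left|E_{Q^d}(A, B)\right|/d\right) \leq \exp\!\left(-\Omega(n/d^2)\right).
\]
Multiplying by the $2^{n/(c_{15}d^2)}$ admissible partitions, the union bound yields $o(1)$ provided the implicit constant in the cross-edge bound is taken large enough in terms of $1/c_{15}$, and the claim follows.

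\textbf{Main obstacle.} The subtle step is the lower bound $\left|E_{Q^d}(A, B)\right| = \Omega(n/d)$. Vertex-isoperimetry readily produces $\Omega(n/\sqrt{d})$ matching edges from $A$ to $V(Q^d) \setminus A$, but their endpoints could a priori all lie in $C$ rather than in $B$, and this is indeed possible for arbitrary $A, B$. The structure that prevents this is property $(d)$, together with the fact that $A$ and $B$ are unions of $Q_2[Y]$-components whose vertices have many $Y$-neighbours: exploiting this to extract cross-edges specifically into $B$ is the most delicate ingredient of the argument.
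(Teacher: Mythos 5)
Your overall framework is right -- reducing to partitions that respect the components of $Q_2[Y]$ (at most $2^{n/(c_{15}d^2)}$ of them), proving a per-partition failure probability of the form $\exp(-\Omega(n/d^2))$, and concluding by a union bound. This matches the paper's structure. The gap is in the step you flagged yourself as "the most delicate ingredient": the lower bound $|E_{Q^d}(A,B)| = \Omega(n/d)$ on \emph{direct} edges between $A$ and $B$ in $Q^d$ is not established, and your sketch does not actually close it. The matching from vertex-isoperimetry lands in $V(Q^d)\setminus A$, and its endpoints may all lie in $C = V(Q^d)\setminus Y$. Property $(d)$ says those endpoints (outside $X$) have $\Omega(d)$ neighbours in $Y = A \cup B$, but nothing forces any of those neighbours to be in $B$; they could all be in $A$. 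So property $(d)$ by itself does not "rule out the possibility that the matching endpoints are entirely absorbed by $C$" in a way that produces direct $A$--$B$ edges, and it is quite plausible for $|E_{Q^d}(A,B)|$ to be $0$ for some admissible partition (e.g.\ if $A$ and $B$ happen to be separated by a thin slab of $V(Q^d)\setminus Y$). Since your sprinkling estimate $(1-q_3)^{|E_{Q^d}(A,B)|}$ genuinely requires direct edges, the argument as written does not go through.

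The paper avoids this by not asking for direct edges: it finds many edge-disjoint potential \emph{paths} of length at most $3$ from $A$ to $B$ that pass through intermediate vertices (typically in $V(Q^d)\setminus Y$), and sprinkles along those. Concretely, it sets $D = N(A)\cap N(B)$ (inclusive $Q^d$-neighbourhoods) and splits into two cases. If $|D|$ is large (order $n/d$), then each $x \in D\setminus X$ has a neighbour in one of $A,B$ and $\Omega(d)$ neighbours in the other, giving a length-$2$ path through $x$ present in $Q^d_{q_3}$ with probability $\Omega(1/d)$; these events are independent over a same-parity subset of $D\setminus X$. If $|D|$ is small, it applies Theorem \ref{t:vtxiso} to $N(A)$ (not to $A$) to get a matching $F$ of size $\Omega(n/\sqrt{d})$ from $N(A)$ to $N(A)^c$, discards edges touching $D\cup X$, and for the remaining edges $uv$ notes that $u\notin D\cup X$ with $u \in N(A)$ forces all of $u$'s many $Y$-neighbours into $A$ (and symmetrically for $v$ and $B$), giving a length-$\le 3$ path through $uv$ present with probability $\Omega(1/d)$; again independence comes from disjointness of the designated edge sets. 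Either way the per-partition failure probability is $\exp(-\Omega(n/d^{3/2}))$, which beats the union bound. To fix your proof you would need to replace "count direct $Q^d$-edges between $A$ and $B$" with "count edge-disjoint short paths between $A$ and $B$ through $Q^d$", together with the $|D|$ case split, which is essentially the paper's argument.
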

\begin{proof}[Proof of Claim \ref{c:partition}]
Indeed, if there is a component $C$ of order at least $c_{15} d^2$ in $Q_2$ such that $C \cap A$ and $C \cap B$ are both non-empty, then there is a path between $A$ and $B$ in $Q_2 \subseteq Q_3$. Hence, we may restrict ourselves to partitions of $Y$ which partition the components of $Q_2$ of order at least $c_{15} d^2$. Since there are at most $\frac{n}{c_{15} d^2}$ many such components in $Q_2$, it follows that the total number of possible partitions to consider is at most $2^{\frac{n}{c_{15} d^2}}$. We will show that the probability that there is no path across such a partition is at most $\exp\left(-\frac{n}{c_{15} d^2}\right)$, and hence by the union bound whp the claim holds.

To this end, let us consider the inclusive neighbourhoods $N(A)$ and $N(B)$ of $A$ and $B$ respectively in $Q_2$, and let $D = N(A) \cap N(B)$. We split into two cases. Let us suppose first that $|D| \geq \frac{4 n}{c_{19} d}$.

In this case, let $D' = D \setminus X$, so that $|D'| \geq \frac{2n}{c_{19} d}$, and let $D'' \subset D'$ be a set of at least $\frac{|D'|}{2} \geq \frac{n}{c_{19} d}$ vertices of the same parity.

For each vertex $x \in D''$ we claim that the probability that there is a path from $A$ to $B$ in $Q^d_{q_3}$ of length at most two which contains $x$ is at least $\frac{c_{18}}{d}$. Indeed, suppose that $x \not\in A \cup B$, then since $x \in D$ it has at least one neighbour in both $A$ and $B$, and since $x \not\in X$ it has at least $\frac{(\hat{\gamma} - c_5)}{2} d$ neighbours in one of $A$ or $B$. Hence, such a path exists with probability at least
\[
q_3 \left(1-(1- q_3)^{\frac{(\hat{\gamma} - c_5)}{2} d}\right) \geq \frac{c_{18}}{d}.
\]
The case where $x \in A$ or $B$ is similar.

However, since these events depend on pairwise disjoint edge sets, and hence are independent for different $x$, it follows that the probability that no $x \in D''$ joins $A$ to $B$ is at most
\[
\left(1-\frac{c_{18}}{d}\right)^{|D''|} \leq \left(1-\frac{c_{18}}{d}\right)^{\frac{n}{c_{19} d}} \leq \exp\left(-\frac{n}{c_{15} d^2}\right).
\]

In the second case, we may assume that $|D| < \frac{4n}{c_{19} d}$. In this case, we note that $|N(A)| \geq |A| \geq c_{17}n$ and $|N(A)^c| \geq |B| - |D| \geq \frac{c_{17}}{2}n$. Hence, by Theorem \ref{t:vtxiso} there is a matching $F$ of size at least $\frac{2 c_{20} n}{\sqrt{d}}$ between $N(A)$ and $N(A)^c$. If we let $F' \subseteq F$ be those edges which do not have an endpoint in $D$ or $X$, then $|F'| \geq \frac{c_{20} n}{\sqrt{d}}$.

For each edge $uv=e$ in $F'$, where without loss of generality $u \in N(A) \setminus (D \cup B)$, either $u \in A$ or $u$ has at least $(\hat{\gamma} - c_5)d$ neighbours in $A$ and similarly either $v \in B$ or $v$ has at least $(\hat{\gamma} - c_5)d$ many neighbours in $B$.

Hence, we can fix as before events depending on pairwise disjoint edge sets for each $e \in F'$ which imply that there is a path between $A$ and $B$ of length at most 3 containing the edge $e$. Each of these events happens with probability at least
\[
q_3 \left(1-(1-q_3)^{(\hat{\gamma} - c_5)d}\right)^2 \geq \frac{c_{18}}{d}
\]
and so, as before, the probability that no $e \in F'$ joins $A$ to $B$ is at most
\[
\left(1-\frac{c_{18}}{d}\right)^{|F'|} \leq \left(1-\frac{c_{18}}{d}\right)^{\frac{c_{20} n}{\sqrt{d}}} \leq \exp\left(-\frac{c_{18}c_{20}n}{d^{\frac{3}{2}}}\right) \leq  \exp\left(-\frac{n}{c_{15} d^2}\right).
\]
\end{proof}

Hence, by Claim \ref{c:partition} whp there is some component $L_1$ of $Q_3$ which contains all but at most $c_{17} n$ vertices of $Y$, that is $|Y \setminus V(L_1)| \leq c_{17}n$, and hence by \eqref{e:largevtcs} 
\[
|V(L_1)| \geq (\hat{\gamma} - 2c_5 - c_{17})n \geq (\gamma_1 - c_3 - 2c_5 - c_{17})n \geq (\gamma_1 - c_1)n.
\]
However, by \eqref{e:upperboundgiant} we have $|V(L_1)| \leq (\gamma_1 + c_1 n)$, and so $|V(L_1)| = (\gamma_1 \pm c_1)n= (\gamma_1 \pm \delta_2)n$. Furthermore, by Claim \ref{c:upperboundgiant} whp at least $(1-\gamma_1 - c_1)n$ vertices of $Q^d_p$ are contained in components of order at most $d$, and hence whp every other component of $Q_3=Q^d_p$ has order at most $\max \{2c_1 n,d \} \leq \delta_2 n$. Finally, it is clear that $|V(L_1) \triangle Y| \leq 2c_1 n \leq \delta_2 n$ also, and so properties \eqref{i:largecomponent} and \eqref{i:largevtcs} of Theorem \ref{t:AKSfine} hold.
\end{proof}

\end{document}